\numberwithin{equation}{section}
\newtheorem{theorem}{Theorem}[section]
\newtheorem{proposition}[theorem]{Proposition}
\newtheorem{lemma}[theorem]{Lemma}
\newtheorem{corollary}[theorem]{Corollary}
\theoremstyle{definition}
\newtheorem{definition}[theorem]{Definition}
\newtheorem{example}[theorem]{Example}
\theoremstyle{remark}
\newtheorem{remark}[theorem]{Remark}
\renewcommand{\hom}{\operatorname{Hom}}
\renewcommand{\ker}{\operatorname{Ker}}
\newcommand{\Z}{\mathbb{Z}}
\newcommand{\Q}{\mathbb{Q}}
\newcommand{\R}{\mathbb{R}}
\newcommand{\C}{\mathbb{C}}
\newcommand{\proj}{{\mathbb P}}
\newcommand{\id}{{\rm id}}
\newcommand{\GL}{{\rm GL}}
\newcommand{\OL}{{\rm O}^{+}(L)}
\newcommand{\Or}{{\rm O}^{+}}
\newcommand{\Ost}{\widetilde{{\rm O}}^{+}(L)}
\newcommand{\DL}{\mathcal{D}_{L}}
\newcommand{\D}{\mathcal{D}}
\newcommand{\DI}{\mathcal{D}(I)}
\newcommand{\FG}{\mathcal{F}(\Gamma)}
\newcommand{\FGcpt}{\mathcal{F}(\Gamma)^{\Sigma}}
\newcommand{\G}{\Gamma}
\newcommand{\GG}{\langle \Gamma, -{\rm id} \rangle}
\newcommand{\GIQ}{\Gamma(I)_{\mathbb{Q}}}
\newcommand{\GIZ}{\Gamma(I)_{\mathbb{Z}}}
\newcommand{\UIQ}{U(I)_{\mathbb{Q}}}
\newcommand{\UIZ}{U(I)_{\mathbb{Z}}}
\newcommand{\UIQZ}{U(I)_{\mathbb{Q}/\mathbb{Z}}}
\newcommand{\UIZZ}{U(I)_{\mathbb{Z}}'}
\newcommand{\GIZbar}{\overline{\Gamma(I)}_{\mathbb{Z}}}
\newcommand{\GIQbar}{\overline{\Gamma(I)}_{\mathbb{Q}}}
\newcommand{\CI}{\mathcal{C}_{I}}
\newcommand{\CII}{\mathcal{C}_{I}^{+}}
\newcommand{\GJQ}{\Gamma(J)_{\mathbb{Q}}}
\newcommand{\WJQ}{W(J)_{\mathbb{Q}}}
\newcommand{\VJQ}{V(J)_{\mathbb{Q}}}
\newcommand{\UJQ}{U(J)_{\mathbb{Q}}}
\newcommand{\GJZ}{\Gamma(J)_{\mathbb{Z}}}
\newcommand{\WJZ}{W(J)_{\mathbb{Z}}}
\newcommand{\VJZ}{V(J)_{\mathbb{Z}}}
\newcommand{\UJZ}{U(J)_{\mathbb{Z}}}
\newcommand{\UJZZ}{U(J)_{\mathbb{Z}}'}
\newcommand{\GJZbar}{\overline{\Gamma(J)}_{\mathbb{Z}}}
\newcommand{\GJQbar}{\overline{\Gamma(J)}_{\mathbb{Q}}}
\newcommand{\WJQZ}{W(J)_{\mathbb{Q}/\mathbb{Z}}}
\newcommand{\UJQZ}{U(J)_{\mathbb{Q}/\mathbb{Z}}}
\newcommand{\XI}{\mathcal{X}(I)}
\newcommand{\XII}{\mathcal{X}(I)'}
\newcommand{\XIcpt}{\mathcal{X}(I)^{\Sigma}}
\newcommand{\XIIcpt}{(\mathcal{X}(I)')^{\Sigma}}
\newcommand{\TJ}{\mathcal{T}(J)}
\newcommand{\TJcpt}{\overline{\mathcal{T}(J)}}
\newcommand{\XJ}{\mathcal{X}(J)}
\newcommand{\XJcpt}{\overline{\mathcal{X}(J)}}
\newcommand{\VJ}{\mathcal{V}(J)}
\newcommand{\HJ}{\mathbb{H}_{J}}
\begin{document}

%%%%%%% Title %%%%%%%%%%%%%%%%%%%%%%%%
\title[]{Irregular cusps of orthogonal modular varieties}
\author[]{Shouhei Ma}
\thanks{Supported by JSPS KAKENHI 17K14158 and 20H00112.} 
\address{Department~of~Mathematics, Tokyo~Institute~of~Technology, Tokyo 152-8551, Japan}
\email{ma@math.titech.ac.jp}
%\subjclass[2000]{14G35, 11F55}
\keywords{} 
%\dedicatory{}
\maketitle

\begin{abstract}
Irregular cusp of an orthogonal modular variety is a cusp 
where the lattice for Fourier expansion is strictly smaller than the lattice of translation. 
Presence of such a cusp affects 
the study of pluricanonical forms on the modular variety using modular forms. 
We study toroidal compactification over an irregular cusp, 
and clarify there the cusp form criterion for the calculation of Kodaira dimension. 
At the same time, we show that irregular cusps do not arise frequently: 
besides the cases when the group is neat or contains $-1$, 
we prove that the stable orthogonal groups of most (but not all) even lattices 
have no irregular cusp. 
\end{abstract}

%%%%%
%%Intro
%%%%%

\section{Introduction}\label{sec: intro}

Irregular cusp of a modular curve is a cusp where 
the width of translation is strictly smaller than the width for Fourier expansion. 
It does not arise frequently, but does exist. 
At such a cusp, vanishing order of cusp forms has to be considered carefully, 
especially when compared with that of pluricanonical forms (cf.~\cite{DS} \S 3.2 -- \S 3.3). 
In this article we study and classify irregular cusps for orthogonal groups of signature $(2, b)$, 
and clarify the effect of such cusps on the study of Kodaira dimension of orthogonal modular varieties. 

Let $L$ be a lattice of signature $(2, b)$. 
Let ${\D}={\DL}$ be the Hermitian symmetric domain attached to $L$, 
which is defined as either of the two connected components of the space 
\begin{equation*}
\{ {\C}\omega \in {\proj}L_{{\C}} \: | \: (\omega, \omega)=0, \: (\omega, \bar{\omega})>0 \}. 
\end{equation*}
We write ${\OL}$ for the subgroup of the orthogonal group ${\rm O}(L)$ 
that preserves the component ${\D}$. 

The domain ${\D}$ has $0$-dimensional and $1$-dimensional cusps. 
For simplicity of exposition, 
we speak only of $0$-dimensional cusps for the moment: 
in fact, the case of $1$-dimensional cusps can be reduced to 
that of adjacent $0$-dimensional cusps. 
A $0$-dimensional cusp of ${\D}$ corresponds to a rank $1$ primitive isotropic sublattice $I$ of $L$. 
Let ${\UIQ}$ be the unipotent part of the stabilizer of $I$ in ${\rm O}^{+}(L_{{\Q}})$. 
Then ${\UIQ}$ is already abelian: 
it is a ${\Q}$-vector space of dimension $b$ (with a hyperbolic quadratic form). 
Let ${\G}$ be a finite-index subgroup of ${\OL}$. 
The cusp $I$ is called an \textit{irregular cusp} for ${\G}$ if 
${\UIQ}\cap {\G} \ne {\UIQ}\cap {\GG}$. 
As we will explain, 
${\UIZ}={\UIQ}\cap {\G}$ is the lattice for Fourier expansion of ${\G}$-modular forms around $I$, 
while ${\UIZZ}={\UIQ}\cap {\GG}$ is the lattice of translation around $I$ in the ${\G}$-action. 
We give several characterizations of irregularity (Proposition \ref{prop: characterize 0dim irregular}), 
including one suitable for explicit calculation. 

Irregular cusps are rather rare: 
they do not exist 
when $-{\rm id}\in {\G}$ or 
when ${\G}$ is neat 
or when ${\G}\subset {\rm SO}^{+}(L)$ with $b$ odd. 
But they do exist, in infinitely many examples in every dimension (\S \ref{ssec: irregular example}). 
Our particular interest is in the so-called \textit{stable orthogonal groups} ${\Ost}$ of even lattices $L$, 
defined as the kernel of the reduction map ${\OL}\to {\rm O}(L^{\vee}/L)$. 
This is the group that most frequently appear in the moduli problem 
related to orthogonal modular varieties. 
Our calculation concerning ${\Ost}$ can be summarized as follows.  

\begin{proposition}[\S \ref{ssec: stable orthogonal}, \S \ref{ssec: irregular example}]\label{prop: intro stable orthogonal}
The stable orthogonal group ${\Ost}$ of an even lattice $L$ 
has no irregular cusp unless when 
$L^{\vee}/L \simeq {\Z}/8\oplus ({\Z}/2)^{\oplus a}$ or 
$L^{\vee}/L \simeq ({\Z}/4)^{\oplus 2}\oplus ({\Z}/2)^{\oplus a}$ as abelian groups. 
Conversely, if $L=U\oplus \langle -8 \rangle \oplus M$ or 
$L=U\oplus \langle -4 \rangle^{\oplus 2} \oplus M$ with $M^{\vee}/M$ 2-elementary, 
then ${\Ost}$ has an irregular $0$-dimensional cusp. 
\end{proposition}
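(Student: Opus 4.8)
The plan is to reduce the question to the discriminant form $A_{L}:=L^{\vee}/L$ by way of the irregularity criterion of Proposition~\ref{prop: characterize 0dim irregular}. Fix a $0$-dimensional cusp $I=\Z\ell$ with $\ell$ primitive isotropic, let $d$ be its divisor (so $(\ell,L)=d\Z$), and let $W=\ell^{\perp}/\ell$ be the transverse lattice, of signature $(1,b-1)$. The key preliminary remark is that an element of $\UIZZ$ not lying in $\UIZ$ is precisely $-\gamma$ for some $\gamma\in\Ost$ acting as $-\id$ on $I$ and as $-\id$ on $W$, and that, once $L^{\vee}/L$ is not $2$-elementary (so $-\id\notin\Ost$), any such $\gamma$ does produce a genuine half-lattice vector, since $-\gamma\in\Ost$ would give $-\id=(-\gamma)\gamma^{-1}\in\Ost$. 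Thus the first step is to record: for $\Ost$ the cusp $I$ is irregular if and only if $L^{\vee}/L$ is not $2$-elementary and there is $\gamma\in\Ost$ with $\gamma|_{I}=-\id$ and $\gamma|_{W}=-\id_{W}$.

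Next I would pass to discriminant forms. Setting $\bar\ell=\ell/d \bmod L$, an isotropic element of order $d$ in $A_{L}$, Nikulin's theory identifies $A_{W}\cong\bar\ell^{\perp}/\langle\bar\ell\rangle$ (computed inside $A_{L}$). An element $\gamma$ as above acts by $-\id$ on $\langle\bar\ell\rangle\cong\Z/d$ and induces $-\id$ on $A_{W}$; since $\gamma\in\Ost$ acts trivially on $A_{L}$ I would first deduce $d\mid 2$, then discard $d=1$ (there $A_{W}\cong A_{L}$, and triviality of $-\id$ forces $A_{L}$ to be $2$-elementary, contrary to assumption), and so reduce to $d=2$, with $\bar\ell$ of order $2$ and $A_{W}$ necessarily $2$-elementary. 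The reverse implication — that $2$-elementariness of $A_{W}$ really yields a $\gamma\in\Ost$ realizing $-\id_{W}$ — is the delicate point: it uses that $-\id_{W}\in\widetilde{{\rm O}}(W)$ exactly when $A_{W}$ is $2$-elementary, together with an extension lemma lifting such an isometry to the cusp stabilizer in $\Ost$ with $\gamma|_{I}=-\id$, and the main obstacle is the sign bookkeeping guaranteeing that the lift lies in $\Or(L)$.

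Granting this criterion, the ``unless'' direction becomes short arithmetic on finite quadratic forms. As $\bar\ell$ has order $2$, each odd $p$-part of $A_{L}$ embeds into $A_{W}$, so $2$-elementariness of $A_{W}$ forces $A_{L}$ to be a $2$-group. Since $\bar\ell^{\perp}$ has index $2$ in $A_{L}$ and $2\bar\ell^{\perp}\subseteq\langle\bar\ell\rangle$, one gets $|2A_{L}|\le 4$; together with $A_{L}$ being non-$2$-elementary this leaves only $A_{L}\cong\Z/4\oplus(\Z/2)^{a}$, $\Z/8\oplus(\Z/2)^{a}$, or $(\Z/4)^{2}\oplus(\Z/2)^{a}$. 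I would then eliminate the first family: a $\Z/4$-block splits off orthogonally from its $2$-elementary complement, so the only order-$2$ vector whose orthogonal complement can yield a $2$-elementary quotient is $\bar\ell=2g$ with $g$ the order-$4$ generator, and $q(2g)=4q(g)$ is always odd, so this $\bar\ell$ is never isotropic. Hence an irregular cusp forces $A_{L}\cong\Z/8\oplus(\Z/2)^{a}$ or $(\Z/4)^{2}\oplus(\Z/2)^{a}$.

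For the converse I would write down the cusp directly. When $L=U\oplus\langle-8\rangle\oplus M$, with $e,f$ a basis of $U$ and $r$ a generator of $\langle-8\rangle$, take $\ell=2e+2f+r$: it is primitive isotropic of divisor $2$, and $\bar\ell=r/2$ is the order-$2$ isotropic class with $\bar\ell^{\perp}\cap\langle r/8\rangle=\langle r/4\rangle$, so that $A_{W}\cong\Z/2\oplus(M^{\vee}/M)$, which is $2$-elementary since $M^{\vee}/M$ is. When $L=U\oplus\langle-4\rangle^{2}\oplus M$, take $\ell=2e+2f+r_{1}+r_{2}$, so that $\bar\ell=(r_{1}+r_{2})/2$ and $A_{W}\cong(\Z/2)^{2}\oplus(M^{\vee}/M)$, again $2$-elementary. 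In both cases $L^{\vee}/L$ is not $2$-elementary, so the criterion supplies $\gamma\in\Ost$ with $\gamma|_{I}=-\id$ and $\gamma|_{W}=-\id_{W}$, giving an irregular $0$-dimensional cusp.
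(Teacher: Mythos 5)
Your handling of the \emph{necessary} direction is essentially correct and runs parallel to the paper's own argument (Lemmas \ref{lem: regular div1}, \ref{lem: constraint irregular stable} and Proposition \ref{prop: AL irregular stable}): the reformulation of irregularity as the existence of $\gamma\in{\Ost}$ inducing $(-{\id}_{I},-{\id}_{W})$ is a correct restatement of Proposition \ref{prop: characterize 0dim irregular}(2), the deduction ${\rm div}(I)=2$ and $A_{W}$ $2$-elementary via equivariance of $A_{W}\cong\bar\ell^{\perp}/\bar\ell$ is sound (the paper gets the same constraints by testing $-E_{m\otimes l}(v)\equiv v$ directly), and the group-theoretic classification, including the elimination of $\Z/4\oplus(\Z/2)^{\oplus a}$ using isotropy of $\bar\ell$, is fine. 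Two small remarks: the proposition speaks of all cusps, so you still need the reduction of irregular $1$-dimensional cusps to adjacent $0$-dimensional ones (Corollary \ref{cor: 1dim reduced to 0dim}); and your parenthetical claim that irregularity of $I$ forces $2\bar\ell^{\perp}\subseteq\langle\bar\ell\rangle$ is exactly the paper's Lemma \ref{lem: constraint irregular stable}, so this part is genuinely equivalent to the published proof.

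The genuine gap is in the \emph{converse}. Everything there hinges on the ``reverse implication'' which you yourself flag as the delicate point and then never prove: that ${\rm div}(I)=2$, $A_{W}$ $2$-elementary and $A_{L}$ not $2$-elementary force the existence of $\gamma\in{\Ost}$ with $\gamma|_{I}=-{\id}$, $\gamma|_{W}=-{\id}_{W}$. This is not a citable ``extension lemma''. The isometries of $L_{{\Q}}$ inducing $(-{\id}_{I},-{\id}_{W})$ form the single coset $\{-E_{w}\}$, a torsor under ${\UIQ}$, and whether this coset meets ${\Ost}$ is governed by integrality conditions at vectors $v\in L^{\vee}$ with $(v,I)\neq 0$; these conditions involve the lift $\tilde m$ and the value $\tfrac{1}{2}(m,m)$, i.e.\ the fine $2$-adic structure of how $I$ sits in $L$, not merely the isomorphism class of $A_{W}$. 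Nikulin-type extension criteria do not apply, because $I^{\perp}$ is a degenerate sublattice, and the fact that $-{\id}_{W}\in\widetilde{{\rm O}}(W)$ says nothing by itself about lifting into the parabolic stabilizer with trivial action on $A_{L}$. Note also that your claimed criterion is strictly stronger than the proposition being proved: it would, for instance, produce irregular cusps for every $L\supset 2U$ whose discriminant form is $(\Z/8,\,\pm 3/8)$, cases the paper pointedly does \emph{not} cover (its converse is stated only for the split lattices $U\oplus\langle -8\rangle\oplus M$ and $U\oplus\langle -4\rangle^{\oplus 2}\oplus M$, consistent with the ``most (but not all)'' phrasing of the abstract). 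So this step cannot be waved through; settling it requires either a case-by-case $2$-adic computation or exactly what the paper does instead: a direct construction. Indeed the paper's proof of the converse (Propositions \ref{prop: irregular example -8} and \ref{prop: irregular example -4+-4}) is a two-line exhibit --- for $L=U\oplus\langle -8\rangle\oplus M$ take $l=2e+2f+v$, $m=e/2-f/2$ and check by hand that $-E_{m\otimes l}\in{\Ost}$ --- and your proposal never produces this (or any) transvection. As it stands, the second half of the proposition is unproved.
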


Consequently, we obtain classification 
for the following examples from moduli spaces (\S \ref{sec: example}): 
\begin{itemize}
\item %${\Ost}$ for $L=2U\oplus 2E_{8}\oplus \langle -2d \rangle$: 
The modular group for $K3$ surfaces of degree $2d$ 
has an irregular cusp exactly when $d=4$. 
\item %${\Ost}$ for $L=2U\oplus 2E_{8}\oplus \langle -2t \rangle \oplus \langle -2d \rangle$: 
The modular group for irreducible symplectic manifolds of $K3^{[t+1]}$-type 
with polarization of split type and degree $2d$ (\cite{GHS10}) 
has an irregular cusp exactly when $(t, d)=(1, 4), (2, 2), (4, 1)$.  
\item %We also prove (\S \ref{ssec: OG10}) that 
The modular group for O'Grady10 manifolds 
with polarization of split type and degree $2d$ (\cite{GHS11}), 
which is larger than ${\Ost}$, 
has an irregular cusp exactly when $d=4$.  
\item Similarly, the modular group for deformation generalized Kummer varieties 
with polarization of split type and degree $2d$ (\cite{Da}) 
%whose lattice is $2U \oplus \langle -2t \rangle \oplus \langle -2d \rangle$ with $t\geq 3$, 
has an irregular cusp exactly when $(t, d)=(4, 1)$. 
\item We will also cover the groups considered in \cite{TVA}, \cite{Pe}, \cite{FM}. 
\end{itemize}
 
A subtle issue concerning irregular cusps, 
which is the main object of this article, 
is comparison of vanishing order between cusp forms and pluricanonical forms. 
We take a toroidal compactification ${\FGcpt}$ of the modular variety ${\FG}={\G}\backslash{\D}$, 
which is defined by choosing a finite collection $\Sigma=(\Sigma_{I})$ of suitable fans, 
one for each ${\G}$-equivalence class of rank $1$ primitive isotropic sublattices $I$ of $L$. 
A ray $\sigma$ in $\Sigma_{I}$ corresponds to a boundary divisor $D(\sigma)$ of 
the torus embedding $\overline{{\D}/{\UIZ}}$, 
and thus determines a boundary divisor $\Delta(\sigma)$ of ${\FGcpt}$ as the image of $D(\sigma)$. 
Then the projection $\overline{{\D}/{\UIZ}}\to {\FGcpt}$ is ramified along $D(\sigma)$, 
with index $2$, exactly when $I$ is irregular and the ray $\sigma$ is irregular in the sense of 
Definition \ref{def: irregular ray}. 

Vanishing order $\nu_{\sigma}(F)$ 
of a ${\G}$-modular form $F$ at $D(\sigma)\subset \overline{{\D}/{\UIZ}}$ 
can be measured by Fourier expansion (\S \ref{ssec: vanishing order}): 
this is done with ${\UIZ}$. 
On the other hand, vanishing order of a pluricanonical form $\omega$ on ${\FG}$ 
should be measured at the level of $\Delta(\sigma) \subset {\FGcpt}$: 
this is essentially done with ${\UIZZ}$.  
When $\omega$ is $m$-canonical and corresponds to $F$ 
(of weight $k=mb$ and character $\chi=\det^{m}$), 
we have the relation (Proposition \ref{prop: vanishing order relation}) 
\begin{equation*}
\nu_{\Delta(\sigma)}(\omega) = a_{\sigma} \cdot \nu_{\sigma}(F) - m, 
\end{equation*}
where $a_{\sigma}=1$ if $\sigma$ is regular but 
$a_{\sigma}=1/2$ if $\sigma$ is irregular due to the boundary ramification. 
If we are involved only with modular forms of specific parity of weight $k$, 
namely $k$ even for $\chi=1$ %(e.g., \cite{Ma}) 
or $k \equiv b$ mod $2$ for $\chi=\det$, 
we do not need to worry about irregular cusps 
because we can enlarge ${\G}$ to ${\GG}$ without any loss. 
However, when we use product with a cusp form of weight in the remaining parity 
%(e.g., $k\not\equiv b$ mod $2$ for $\chi=\det$) 
in the construction of a modular form corresponding to 
a pluricanonical form, 
we cannot add $-{\id}$ to ${\G}$, 
and have to be careful about the coefficient $a_{\sigma}=1/2$ at irregular rays $\sigma$. 

Gritsenko-Hulek-Sankaran \cite{GHS07} gave a criterion, 
called the low weight cusp form trick, 
for ${\FG}$ to be of general type in terms of existence of a certain cusp form. 
It appears that irregular cusps are not covered in \cite{GHS07}, 
essentially by assuming $-{\rm id}\in {\G}$, 
explicitly for $1$-dimensional cusps (\cite{GHS07} p.539) 
and implicitly for $0$-dimensional cusps (see the remark below). 
In view of the coefficient $a_{\sigma}=1/2$ at irregular $\sigma$, 
it seems that the criterion needs to be modified at such boundary divisors. 
The result is summarized as follows 
(compare with \cite{GHS07} Theorem 1.1). 

\begin{theorem}[Theorem \ref{thm: low slope}]\label{thm: intro modified criterion}
Let $L$ be a lattice of signature $(2, b)$ with $b\geq 9$ 
and ${\G}$ be a subgroup of ${\OL}$ of finite index. 
We take a ${\G}$-admissible collection $\Sigma=(\Sigma_{I})$ of fans so that 
$\Sigma_{I}$ is basic with respect to ${\UIQ}\cap {\GG}$ at every $0$-dimensional cusp $I$. 
Assume that there exists a ${\G}$-cusp form $F$ of weight $k<b$ and some character 
satisfying the following: 
\begin{enumerate}
\item $F$ vanishes at the ramification divisor of ${\D}\to {\FG}$. 
\item $\nu_{\sigma}(F)\geq 2$ at every irregular ray $\sigma$ at every irregular $I$. 
\end{enumerate}
Then ${\FG}$ is of general type. 
\end{theorem}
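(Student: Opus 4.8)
The plan is to run the low weight cusp form trick of Gritsenko--Hulek--Sankaran, correcting the boundary bookkeeping at the irregular rays. First I would recall the dictionary: an $m$-canonical form on $\FG$ corresponds to a $\G$-modular form of weight $mb$ and character $\det^{m}$, and the question is when such a form extends to a holomorphic $m$-canonical form on a smooth model of $\FGcpt$. Since $b\geq 9$, the interior quotient singularities of $\FG$ are canonical by the Reid--Tai criterion, and, as each $\Sigma_{I}$ is chosen basic with respect to $\UIQ\cap\GG$, the toroidal boundary contributes at worst canonical singularities; hence no sections are lost to singularities and extension is purely a matter of pole orders along divisors. A weight-$mb$ form of character $\det^{m}$ therefore yields a holomorphic $m$-canonical form if and only if it vanishes to order $\geq m$ along the ramification divisor of $\D\to\FG$ (which has index $2$), to order $\geq m$ at every regular ray, and, by the vanishing order relation of Proposition~\ref{prop: vanishing order relation} with $a_{\sigma}=1/2$, to order $\geq 2m$ at every irregular ray.

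With this criterion fixed, I would build the required sections from the hypothesised cusp form $F$ of weight $k<b$. Let $\chi$ be the character of $F$ and put $\psi:=\det^{m}\chi^{-m}$; for any $\Phi\in M_{m(b-k)}(\G,\psi)$ set $F_{m}:=F^{m}\cdot\Phi$, a $\G$-modular form of weight $mb$ and character $\det^{m}$. Condition (1) gives $\nu_{R}(F)\geq 1$ on the ramification divisor $R$, so $\nu_{R}(F_{m})\geq m$. At a regular ray $F$ is a cusp form, whence $\nu_{\sigma}(F)\geq 1$ and $\nu_{\sigma}(F_{m})\geq m$; at an irregular ray condition (2) gives $\nu_{\sigma}(F)\geq 2$ and hence $\nu_{\sigma}(F_{m})\geq 2m$. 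By the criterion of the previous paragraph, every such $F_{m}$ defines a holomorphic $m$-canonical form on a smooth model.

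It then remains to count these forms. The graded ring of $\G$-modular forms is an integral domain, so multiplication by the fixed nonzero $F^{m}$ is injective and the forms $F_{m}$ span a subspace of dimension $\dim M_{m(b-k)}(\G,\psi)$. By Hirzebruch--Mumford proportionality this dimension grows like $c\cdot\bigl(m(b-k)\bigr)^{b}\sim c'\,m^{b}$ as $m\to\infty$, with $c'>0$ because $k<b$; twisting by the fixed finite-order character $\psi$ does not affect the leading term. Hence $h^{0}(mK)$ grows like $m^{b}=m^{\dim\FG}$, the Kodaira dimension of $\FG$ equals its dimension, and $\FG$ is of general type.

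The genuinely new point, and the step I expect to require the most care, is the treatment of the irregular rays. One cannot sidestep them by enlarging $\G$ to $\GG$: the factor $\Phi$ carries the character $\psi$, which need not be trivial on $-\id$, so $F_{m}$ is in general only a $\G$-modular form. Consequently the index-$2$ ramification of $\overline{\D/\UIZ}\to\FGcpt$ along irregular rays is genuine, the coefficient $a_{\sigma}=1/2$ in Proposition~\ref{prop: vanishing order relation} is unavoidable, and it is precisely this halving that forces the doubled vanishing $\nu_{\sigma}(F)\geq 2$ imposed in hypothesis (2). The crux is thus to confirm that the basic-fan choice with respect to $\UIQ\cap\GG$ keeps the boundary discrepancies in exact agreement with the vanishing order relation, so that (1) and (2) are the only conditions needed even at the irregular rays.
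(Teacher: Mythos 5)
Your proposal follows essentially the same route as the paper's proof of Theorem \ref{thm: low slope}: the same extension criterion (Corollary \ref{cor: extendability}), the same singularity input (Proposition \ref{thm: singularity} at the boundary, using the fans basic with respect to ${\UIQ}\cap{\GG}$, and Gritsenko--Hulek--Sankaran in the interior), the same construction of pluricanonical forms as $F^{m}$ times auxiliary modular forms, and the same $m^{b}$ dimension count. Your bookkeeping at the irregular rays (order $\geq m$ at regular rays and the ramification divisor, order $\geq 2m$ at irregular rays, via Proposition \ref{prop: vanishing order relation}) is exactly the paper's.

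The one place where you genuinely deviate is the character bookkeeping, and there your argument has a gap. Your auxiliary space is $M_{m(b-k)}({\G},\psi)$ with $\psi=\det^{m}\chi^{-m}$. Contrary to what you write, $\psi$ is \emph{not} a fixed character: it varies with $m$ (through a finite set, since $\chi$ has finite order). More seriously, the asymptotic $\dim M_{w}({\G},\psi)\sim c\,w^{b}$ for an \emph{arbitrary} finite-order character is not something you can simply quote: the Hirzebruch--Mumford proportionality results used in \cite{GHS07} cover the trivial and determinant characters, and your claim implicitly presupposes that nonzero forms of character exactly $\psi$ exist at all, which you have not established. The paper sidesteps this entirely by first replacing $F$ with a power so that its character becomes trivial; since taking powers rescales both $k$ and every $\nu_{\ast}(F)$, the hypotheses must then be phrased as slope conditions $\nu_{\ast}(F)/k$ --- this is precisely why Theorem \ref{thm: low slope} is stated with slopes, with Theorem \ref{thm: intro modified criterion} as its $k<b$ special case --- and afterwards only trivial-character spaces $M_{(bN_{0}-k)m}({\G})$ with $m$ even are needed. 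Your argument can be repaired while keeping your cleaner direct $m$-canonical bookkeeping: since $\chi({\G})$ is finite, restrict to $m$ divisible by $2\operatorname{ord}(\chi)$; then $\psi=1$, your $F_{m}=F^{m}\Phi$ with $\Phi\in M_{m(b-k)}({\G})$ has weight $mb$ and character $\det^{m}$ as required, and $h^{0}(mK)\geq\dim M_{m(b-k)}({\G})\geq c\,m^{b}$ along this arithmetic progression, which suffices for bigness of the canonical bundle of a desingularization and hence for general type.
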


%At first glance, only condition at $0$-dimensional cusps appears here as for boundary, 
%but in fact, information at $1$-dimensional cusps are 
%encoded in the condition at isotropic rays $\sigma$. 
The condition on $\Sigma$ is put to ensure that ${\FGcpt}$ has canonical singularities 
(\cite{GHS07}, \cite{Ma}), and this can always be satisfied. 
When ${\G}$ has no irregular cusp, the condition (2) is vacuous, 
and this is the criterion in \cite{GHS07}; 
the choice of $\Sigma$ does not matter with $F$ and can be dropped (or hidden) from the criterion. 
Even when ${\G}$ has an irregular cusp, 
if the weight $k$ is even for $\chi=1$ or $k\equiv b$ mod $2$ for $\chi=\det$, 
the condition (2) is still automatically satisfied by the cuspidality of $F$ (Proposition \ref{prop: irregular vanishing order}).  
However, when ${\G}$ has an irregular cusp and $k$ belongs to the remaining parity, 
the condition (2) arises, 
and the choice of $\Sigma_{I}$ is then involved with $F$. 
Practically it would not be very easy to check (or achieve) $\nu_{\sigma}(F) \geq 2$ 
for specific $F$ and $\Sigma_{I}$. 
Probably the most plausible scenario would be to expect and check that 
the group ${\G}$ in question has no irregular cusp. 
We could say that this is a small cost for using cusp forms of arbitrary weight. 

By the examples discussed after Proposition \ref{prop: intro stable orthogonal}, 
the general-type results in \cite{GHS07}, \cite{GHS10}, \cite{GHS11}, 
\cite{Da}, \cite{TVA}, \cite{Pe}, \cite{FM} are not affected. 
This is our essential purpose. 

As a final remark concerning irregular cusps, 
%it is the central element $-{\rm id}$ in the Lie group ${\rm O}^{+}(L_{{\R}})$ 
%that is eventually responsible for the presence of irregular cusps. 
it should be remembered that, in \cite{AMRT}, 
subgroups of ${\rm O}^{+}(L_{{\R}})/\pm {\rm id}$ are considered, 
rather than of ${\rm O}^{+}(L_{{\R}})$. 
This means that the given group ${\G}<{\OL}$ is %enlarged to ${\GG}$ and then divided by $-{\id}$. 
replaced by ${\GG}/\pm{\id}$. 
In this situation, it is not ${\UIZ}={\UIQ}\cap {\G}$ but rather ${\UIZZ}={\UIQ}\cap {\GG}$ 
that is written as $U(F)_{{\Z}}$ in the notation of \cite{AMRT}. 
This is a subtle difference that may arise when working with \cite{AMRT}, 
and that is related with irregularity. 
%this is the lattice of translation for ${\G}$. 
%On the other hand, even one keeps to consider modular forms of odd weight (say for $\chi=1$),  
%Fourier expansion of such modular forms can be done only with ${\UIQ}\cap {\G}$. 
%This would be a subtle situation where irregular cusps could be overlooked. 
%This difference matters for irregular cusps. 

To conclude, 
irregular cusps are cusps where the lattice for Fourier expansion 
is smaller than the lattice of translation. 
It is the central element $-{\rm id}$ in the Lie group ${\rm O}^{+}(L_{{\R}})$ 
that is eventually responsible for the presence of such cusps. 
We need to be careful about such cusps when using a cusp form 
of odd weight with $\chi=1$ or weight $k\not\equiv b$ mod $2$ with $\chi=\det$ 
to construct a pluricanonical form on ${\FGcpt}$, 
as a benefit of not dividing ${\rm O}^{+}(L_{{\R}})$ by $-{\id}$.

I would like to thank 
Valery Gritsenko, Klaus Hulek, Shigeyuki Kondo and Gregory Sankaran 
for their valuable comments.

\setcounter{tocdepth}{1}
\tableofcontents

%\subsection*{Organization and convention}

This article is organized as follows.  
In \S \ref{sec: 0dim cusp} we recall the structure of the stabilizer of a $0$-dimensional cusp. 
In \S \ref{sec: irr 0dim cusp} we define and study irregular $0$-dimensional cusps. 
In \S \ref{sec: example} we give examples of groups ${\G}$ with/without irregular cusp. 
In \S \ref{sec: 1dim cusp} we recall the structure of the stabilizer of a $1$-dimensional cusp. 
In \S \ref{sec: irr 1dim cusp} we study irregular $1$-dimensional cusps. 
In \S \ref{sec: toroidal cpt} we study some basic properties of a toroidal compactification of ${\FG}$. 
In \S \ref{sec: cusp form criterion} we prove Theorem \ref{thm: intro modified criterion}. 
The main contents of this article are contained in 
\S \ref{sec: irr 0dim cusp}, \S \ref{sec: example}, \S \ref{sec: irr 1dim cusp} 
and \S \ref{sec: cusp form criterion}. 
\S \ref{sec: 0dim cusp} and \S \ref{sec: 1dim cusp} are expository, 
but we tried to be rather self-contained because of the subtle nature of irregular cusps 
and for calculation of explicit examples in \S \ref{sec: example}. 
The logical dependence of these sections is as follows. 

\begin{equation*}
\xymatrix{
\S \ref{sec: 0dim cusp} \ar[r] \ar[d] & \S \ref{sec: irr 0dim cusp} \ar[r] \ar[d] & 
\S \ref{sec: example} &    \\ 
\S \ref{sec: 1dim cusp} \ar[r] & \S \ref{sec: irr 1dim cusp} \ar[r] \ar@{.>}[ur] & 
\S \ref{sec: toroidal cpt} \ar[r]  & \S \ref{sec: cusp form criterion}          
}
\end{equation*}

Throughout the article, a \textit{lattice} usually means a free ${\Z}$-module of finite rank 
endowed with a nondegenerate integral symmetric bilinear form $(\cdot , \cdot) : L \times L \to {\Z}$. 
In a few occasions, we use the word "lattice" just for a free ${\Z}$-module of finite rank, 
but no confusion will likely to occur. 
The dual lattice ${\hom}(L, {\Z})$ of $L$ will be denoted by $L^{\vee}$. 
A sublattice $I\subset L$ is called \textit{primitive} when $L/I$ is free, 
and \textit{isotropic} when $(I, I)\equiv 0$. 
A lattice $L$ is called \textit{even} if $(l, l)\in 2{\Z}$ for every $l\in L$, 
but this is not assumed except in \S \ref{sec: example}.   
We write $U$ for the even unimodular lattice of signature $(1, 1)$ 
given by the Gram matrix 
$\begin{pmatrix} 0 & 1 \\ 1 & 0 \end{pmatrix}$.

%%%%%
%%0-dimensional cusp
%%%%%

\section{$0$-dimensional cusps}\label{sec: 0dim cusp}

Let $L$ be a lattice of signature $(2, b)$. 
We write $Q = Q_{L}$ 
for the isotropic quadric in ${\proj}L_{{\C}}$ defined by $(\omega, \omega)=0$. 
The Hermitian symmetric domain attached to $L$ is the open set of $Q$ 
\begin{equation*}
{\D} = {\DL} = \{ {\C}\omega \in Q | (\omega, \bar{\omega})>0 \}^{+}, 
\end{equation*}
where $+$ means the choice of a connected component. 
%As in \S \ref{sec: intro}, we write ${\OL}$ for the subgroup of ${\rm O}(L)$ preserving ${\D}$. 
%
The domain ${\D}$ has two types of rational boundary components (cusps): 
$0$-dimensional and $1$-dimensional cusps. 
They correspond to primitive isotropic sublattices of $L$ of rank $1$ and $2$ respectively. 
In this section we recall the structure of the stabilizer of a $0$-dimensional cusp 
and partial toroidal compactification over it. 
Although the contents of this section are quite standard (cf.~\cite{Sc}, \cite{Ko}, \cite{GHS07}, \cite{Lo}), 
we tried to be rather self-contained and explicit for two reasons: 
because of the subtle nature of irregular cusps (\S \ref{sec: irr 0dim cusp}), 
and for the sake of calculation of explicit examples (\S \ref{sec: example}).

\subsection{Tube domain model}\label{ssec: tube domain}

Throughout this section we fix a rank $1$ primitive isotropic sublattice $I$ of $L$. 
The $0$-dimensional cusp corresponding to $I$ is the point ${\proj}I_{{\C}}$ of $Q$. 
We write 
\begin{equation*}
L(I) = (I^{\perp}\cap L / I) \otimes I. 
\end{equation*}
Twist by $I$, not choosing its generator, will be rather essential. 
The quadratic form on $I^{\perp}/I$ and an isomorphism $I\simeq {\Z}$ 
define a quadratic form on $L(I)$, 
independent of the choice of $I\simeq {\Z}$, 
which has signature $(1, b-1)$. 
We denote by ${\CI}$ the positive cone in $L(I)_{{\R}}$, 
namely a chosen component of $\{ w \in L(I)_{{\R}} \: | \: (w, w)>0 \}$. 

We write ${\DI}=Q - Q\cap {\proj}I_{{\C}}^{\perp}$. 
Then ${\D}$ is contained in ${\DI}$.  
The projection ${\proj}L_{{\C}}\dashrightarrow {\proj}(L/I)_{{\C}}$ 
from the point ${\proj}I_{{\C}}\in Q$ defines an isomorphism 
\begin{equation*}
{\DI} \: \stackrel{\simeq}{\to} \: {\proj}(L/I)_{{\C}} - {\proj}(I^{\perp}/I)_{{\C}}. 
\end{equation*}
If we choose a rank $1$ sublattice $I'$ of $L$ with $(I, I')\not\equiv 0$, 
this defines a base point of the affine space ${\proj}(L/I)_{{\C}} - {\proj}(I^{\perp}/I)_{{\C}}$ 
and hence an isomorphism 
\begin{equation*}
{\proj}(L/I)_{{\C}} - {\proj}(I^{\perp}/I)_{{\C}} \simeq 
(I^{\perp}/I)_{{\C}}\otimes (I')_{{\C}}^{\vee} \simeq L(I)_{{\C}}. 
\end{equation*}
The image of ${\D}\subset {\DI}$ by this series of isomorphisms is the tube domain in $L(I)_{{\C}}$  
\begin{equation*}
\mathcal{D}_{I} = \{ \: Z\in L(I)_{{\C}} \: | \: {\rm Im}(Z)\in {\CI} \: \}. 
\end{equation*}
This is the tube domain realization of ${\D}$ with respect to $I$. 
Note that this is not canonical, depending on the choice of $I'$ (= choice of base point). 
Change of $I'$ acts by translation on $L(I)_{{\C}}$.

\subsection{Stabilizer over ${\Q}$}\label{ssec: stab 0dim Q}

Let ${\GIQ}$ be the stabilizer of $I$ in ${\rm O}^{+}(L_{{\Q}})$. 
Note that we are not considering the stabilizer of $I_{{\Q}}$, but of $I$. 
This is not restrictive when restricting to subgroups of ${\OL}$. 
We put 
\begin{equation*}
{\UIQ} = {\ker}({\GIQ} \to {\rm O}(I_{{\Q}}^{\perp}/I_{{\Q}})\times {\rm GL}(I)). 
\end{equation*}
This is the unipotent part of ${\GIQ}$ and can be explicitly described as follows. 
For $m\otimes l \in L(I)_{{\Q}}$ the \textit{Eichler transvection} $E_{m\otimes l}\in {\GIQ}$ is defined by 
(cf.~\cite{Sc}, \cite{GHS09}) 
\begin{equation*}
E_{m\otimes l}(v) = v - (\tilde{m}, v)l + (l, v)\tilde{m} - \frac{1}{2}(m, m)(l, v)l, \qquad v\in L_{{\Q}}, 
\end{equation*}
where $\tilde{m}\in I^{\perp}_{{\Q}}=I^{\perp}_{{\Q}}\cap L_{{\Q}}$ 
is an arbitrary lift of $m\in (I^{\perp}/I)_{{\Q}}$. 
In particular, 
$E_{m\otimes l}(v) = v - (m, v)l$ 
when $v\in I^{\perp}_{{\Q}}$. 
We have $E_{w}\circ E_{w'}=E_{w+w'}$ for $w, w'\in L(I)_{{\Q}}$. 
Then we have the canonical isomorphism 
\begin{equation*}
L(I)_{{\Q}} \to {\UIQ}, \qquad m\otimes l \mapsto E_{m\otimes l}. 
\end{equation*}
We identify ${\UIQ}$ with $L(I)_{{\Q}}$ in this way. 
We also identify $\textrm{O}(I_{{\Q}}^{\perp}/I_{{\Q}})\times {\rm GL}(I)$ with 
$\textrm{O}(L(I)_{{\Q}})\times {\rm GL}(I)$ by the canonical twisted isomorphism 
\begin{equation*}
\textrm{O}(I_{{\Q}}^{\perp}/I_{{\Q}})\times {\rm GL}(I) \to \textrm{O}(L(I)_{{\Q}})\times {\rm GL}(I), 
\qquad (\gamma_{1}, \gamma_{2})\mapsto (\gamma_{1}\otimes \gamma_{2}, \gamma_{2}). 
\end{equation*} 

We thus have the canonical exact sequence 
\begin{equation}\label{eqn: canonical sequence GIQ}
0 \to L(I)_{{\Q}}\to {\GIQ} \stackrel{\pi}{\to} {\rm O}^{+}(L(I)_{{\Q}})\times {\rm GL}(I) \to 1. 
\end{equation}
This sequence \textit{non-canonically} splits 
if we choose a lift $I^{\perp}_{{\Q}}/I_{{\Q}}\hookrightarrow I^{\perp}_{{\Q}}$ 
of $I^{\perp}_{{\Q}}/I_{{\Q}}$, or equivalently, 
a rank $1$ sublattice $I'$ of $L$ with $(I, I')\not\equiv 0$. 
%a lift $(I^{\perp}/I)_{{\Q}}\hookrightarrow I^{\perp}_{{\Q}}$ of $(I^{\perp}/I)_{{\Q}}$. 
In any such a splitting of ${\GIQ}$, 
the element $-{\rm id}_{L}\in {\GIQ}$ is expressed as 
$-{\rm id}_{L} = ({\rm id}_{L(I)}, -{\rm id}_{I}, 0)$ 
where $0\in L(I)_{{\Q}}$. 
Since $\gamma \circ E_{w} \circ \gamma^{-1} =E_{\gamma w}$ for $\gamma \in {\GIQ}$, 
the adjoint action of ${\GIQ}$ on ${\UIQ}$ 
coincides with the natural action of ${\GIQ}$ on $L(I)_{{\Q}}$. 
In the induced adjoint action of 
${\Or}(L(I)_{{\Q}})\times {\rm GL}(I)$ on ${\UIQ}$, 
${\rm GL}(I)=\{ \pm 1 \}$ acts trivially, 
and ${\Or}(L(I)_{{\Q}})$ acts by its natural action on $L(I)_{{\Q}}$.

\subsection{Stabilizer over ${\Z}$}\label{ssec: stab 0dim Z}

Let ${\G}$ be a subgroup of ${\OL}$ of finite index. 
We write 
\begin{equation*}
{\GIZ} = {\GIQ}\cap {\G}, \quad 
{\UIZ} = {\UIQ}\cap {\G}, \quad 
{\GIZbar} = {\GIZ}/{\UIZ}. 
\end{equation*}
Then ${\UIZ}$ is a lattice on ${\UIQ}$. 
By definition we have the exact sequence 
\begin{equation}\label{eqn: GIZ sequence}
0 \to {\UIZ} \to {\GIZ} \to {\GIZbar} \to 1. 
\end{equation}
Although \eqref{eqn: canonical sequence GIQ} splits, 
this does not necessarily mean that \eqref{eqn: GIZ sequence} splits. 
We write 
${\UIQZ}={\UIQ}/{\UIZ}$. 
This is the group of torsion points of the algebraic torus $T(I)=U(I)_{{\C}}/{\UIZ}$. 
We also put 
\begin{equation*}
{\GIQbar} = 
\pi^{-1}({\rm O}^{+}({\UIZ})\times {\rm GL}(I)) / {\UIZ}, 
\end{equation*}
which makes sense because ${\UIZ}$ is normal in 
$\pi^{-1}({\rm O}^{+}({\UIZ})\times {\rm GL}(I))$ 
by definition. 
This fits into the exact sequence 
\begin{equation*}
0 \to {\UIQZ} \to {\GIQbar} \to {\rm O}^{+}({\UIZ})\times {\rm GL}(I) \to 1. 
\end{equation*}
Then ${\GIZbar}$ is canonically a subgroup of ${\GIQbar}$ with ${\GIZbar}\cap {\UIQZ} = \{ 0 \}$. 
Although the projection 
${\GIZbar}\to {\rm O}^{+}({\UIZ})\times {\rm GL}(I)$ 
is injective, we should not view ${\GIZbar}$ as a subgroup of 
${\rm O}^{+}({\UIZ})\times {\rm GL}(I)$ 
when considering its action on ${\D}/{\UIZ}$ (cf.~\cite{Ma} Appendix). 

We choose a rank $1$ sublattice $I'\subset L$ with $(I, I')\not\equiv 0$. 
As explained, this induces a tube domain realization of ${\D}$ and a splitting of ${\GIQ}$. 
Dividing by ${\UIZ}$ and writing ${\XI}={\D}/{\UIZ}$, 
we obtain isomorphisms 
\begin{equation*}
{\XI} \simeq \mathcal{D}_{I}/{\UIZ}  \quad \subset \quad  {\DI}/{\UIZ} \simeq T(I), 
\end{equation*}
\begin{equation}\label{eqn: split GIQbar}
{\GIQbar} \: \simeq \: ({\rm O}^{+}({\UIZ})\times {\rm GL}(I))\ltimes {\UIQZ}, 
\end{equation}
both depending on the choice of $I'$. 
%Here ${\rm ord}\colon T(I)\to U(I)_{{\R}}$ is the projection to the imaginary part. 
Then the canonical action of ${\GIZbar}$ on ${\XI}$ 
is induced from the natural action of 
$({\rm O}^{+}({\UIZ})\times {\rm GL}(I))\ltimes {\UIQZ}$ 
on $T(I)$ via these (non-canonical but compatible) isomorphisms. 
Here ${\rm O}^{+}({\UIZ})$ acts by torus automorphism fixing the identity, 
${\rm GL}(I)$ acts trivially, and 
${\UIQZ}$ acts by translation. 
The action of ${\GIZbar}$ on ${\XI}$ may have translation component, 
unless ${\GIZbar}$ is contained in the section of ${\rm O}^{+}({\UIZ})\times {\rm GL}(I)$.

\begin{remark}\label{remark: GlZ}
Let $I={\Z}l$ and $\Gamma(l)_{{\Z}}<{\GIZ}$ be the kernel of ${\GIZ}\to {\rm GL}(I)$. 
When $-{\id}\in {\G}$, we have ${\GIZ}=\Gamma(l)_{{\Z}}\times \{ \pm {\id} \}$, 
so we may replace ${\GIZ}$ by $\Gamma(l)_{{\Z}}$ when considering action on ${\D}$, 
as was done in \cite{Ma} Appendix.
(The last sentence of \cite{Ma} Remark A.8 for ${\G}={\Ost}$ is valid under the condition 
$\Gamma(l)_{{\Z}}={\GIZ}$ (e.g.~${\rm div}(I)>2$) or $A_{L}$ 2-elementary, or ${\rm div}(I)=1$. 
It is also valid when ${\rm div}(I)=2$ if $[l/2]\in A_{L}$ is indivisible.) 
\end{remark}

\subsection{Partial toroidal compactification}\label{ssec: toroidal cpt 0dim}

We recall partial toroidal compactification of ${\XI}={\D}/{\UIZ}$ following \cite{AMRT}. 
We put a ${\Q}$-structure on $U(I)_{{\R}}$ by ${\UIQ}\simeq L(I)_{{\Q}}$. 
We write ${\CII}={\CI}\cup \bigcup_{w}{\R}_{\geq 0}w$, 
where $w$ ranges over all isotropic vectors of $L(I)_{{\Q}}$ 
in the closure of ${\CI}$. 
A rational polyhedral cone decomposition (fan) $\Sigma=(\sigma_{\alpha})_{\alpha}$ 
in $U(I)_{{\R}}$ is called \textit{${\GIZ}$-admissible} (\cite{AMRT}) 
if the support of $\Sigma$ is ${\CII}$, 
$\Sigma$ is preserved under the adjoint (= natural) action of ${\GIZ}$ on $U(I)_{{\R}}=L(I)_{{\R}}$, 
and there are only finitely many cones up to the action of ${\GIZ}$. 
Isotropic rays in $\Sigma$ correspond to rational isotropic lines in $L(I)_{{\Q}}$ 
(hence independent of $\Sigma$), 
which in turn correspond to rank $2$ primitive isotropic sublattices $J$ of $L$ containing $I$. 

The fan $\Sigma$ defines a torus embedding $T(I)\hookrightarrow T(I)^{\Sigma}$ 
of the torus $T(I)=U(I)_{{\C}}/{\UIZ}$. 
Each ray $\sigma$ of $\Sigma$ 
defines a sub torus embedding $T(I)\hookrightarrow T(I)^{\sigma}\subset T(I)^{\Sigma}$, 
isomorphic to $({\C}^{\times})^{b} \hookrightarrow {\C}\times ({\C}^{\times})^{b-1}$, 
whose unique boundary divisor is the quotient torus defined by the quotient lattice 
${\UIZ}/({\R}\sigma \cap {\UIZ})$. 
The character group of this boundary torus is $\sigma^{\perp}\cap U(I)_{{\Z}}^{\vee}$. 
Here we regard $U(I)_{{\Z}}^{\vee}$ as a lattice on ${\UIQ}$ by the quadratic form on ${\UIQ}=L(I)_{{\Q}}$, 
which gives the pairing between ${\UIZ}$ and $U(I)_{{\Z}}^{\vee}$. 

We take a tube domain realization of ${\D}$ by choosing $I'\subset L$ with $(I, I')\not\equiv 0$. 
Then let ${\XIcpt}$ be the interior of the closure of 
${\XI}\simeq \mathcal{D}_{I}/{\UIZ}$ in $T(I)^{\Sigma}$. 
This embedding ${\XI}\hookrightarrow {\XIcpt}$ 
is the partial toroidal compactification over $I$ defined by the fan $\Sigma$. 
It is ${\GIZbar}$-equivariant, and does not depend on the choice of $I'$. 
We can think of ${\XIcpt}$ as giving a local chart for the boundary points 
of a full toroidal compactification lying over the $I$-cusp (see \S \ref{sec: toroidal cpt}), 
like ${\D}$ gives a local chart for the interior points in ${\G}\backslash {\D}$.

%%%%%
%%Irregular 0-dimensional cusp
%%%%%

\section{Irregular $0$-dimensional cusps}\label{sec: irr 0dim cusp}

We now study irregular $0$-dimensional cusps. 
Let ${\G}$ be a finite-index subgroup of ${\OL}$ and 
$I$ be a rank $1$ primitive isotropic sublattice of $L$. 
We keep the notation from \S \ref{sec: 0dim cusp}. 
We will define irregularity in two stages: 
irregularity of a cusp (\S \ref{ssec: 0dim irregular define}), 
and irregularity of a toroidal boundary divisor over (or adjacent to) an irregular cusp (\S \ref{ssec: irregular boundary divisor 0dim}). 
The first stage is concerned only with ${\G}$, 
but the second stage is also involved with an ${\GIZ}$-admissible fan.

\subsection{Irregularity}\label{ssec: 0dim irregular define}

We give several equivalent definitions of irregularity of a $0$-dimensional cusp 
in the following form. 

\begin{proposition}\label{prop: characterize 0dim irregular}
The following conditions are equivalent. 
\begin{enumerate}
\item ${\UIZ}\ne {\UIZZ}$ where ${\UIZZ}={\UIQ}\cap {\GG}$. 
\item $-{\id}\not\in {\G}$ and $-E_{w}\in {\GIZ}$ for some $w\in L(I)_{{\Q}}$. 
\item $-{\id}\not\in {\G}$ and ${\GIZbar}\to {\Or}({\UIZ})$ is not injective. 
\item ${\GIZbar}$ contains an element which acts by a nonzero translation on ${\XI}={\D}/{\UIZ}$. 
\end{enumerate}
When these hold, we have 
%\begin{equation*}
%{\GG}/{\G} = 
${\UIZZ}/{\UIZ} = \langle E_{w} \rangle \simeq {\Z}/2$ and  
%\end{equation*}
\begin{equation*}
{\ker}({\GIZbar}\to {\Or}({\UIZ})) = \langle -E_{w} \rangle \simeq {\Z}/2, 
\end{equation*}
and the translation in $(4)$ is given by $[w]\in {\UIQZ}$ and is unique. 
\end{proposition}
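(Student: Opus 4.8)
The plan is to work throughout with the decomposition $\GG = \G \sqcup (-\id)\G$, which is valid because $-\id \notin \G$ under the hypotheses. First I would compute $\UIZZ = \UIQ \cap \GG$ by intersecting $\UIQ$ with each coset. The intersection with $\G$ is $\UIZ$ by definition, while an element of $(-\id)\G$ lies in $\UIQ$ precisely when it is an Eichler transvection $E_{w}$, that is, when $-E_{w} \in \GIZ$ (here I use that $-\id$ is central with $\pi$-image $(\id_{L(I)}, -1)$, so that $(-\id)E_{w} = -E_{w}$ and this stabilizes $I$). Thus $\UIZZ = \UIZ \cup \{ E_{w} : -E_{w} \in \GIZ \}$, a set which is nonempty by condition (2).

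Next I would show that $S = \{ w \in L(I)_{{\Q}} : -E_{w} \in \GIZ \}$ is a single coset of $\UIZ$. If $-E_{w}, -E_{w'} \in \GIZ$ then $(-E_{w})(-E_{w'})^{-1} = E_{w-w'} \in \G$, whence $w - w' \in \UIZ$; conversely $S$ is stable under adding $\UIZ$. Fixing $w$ with $-E_{w} \in \GIZ$, the relation $(-E_{w})^{2} = E_{2w}$ (again since $-\id$ is central of order $2$) lies in $\G \cap \UIQ = \UIZ$, so $2w \in \UIZ$, while $w \notin \UIZ$ (otherwise $E_{w}$ and $-E_{w}$ both in $\G$ would force $-\id \in \G$). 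Hence $[w] \in \UIQZ$ has order exactly $2$, and $\UIZZ/\UIZ = \langle E_{w} \rangle \simeq \Z/2$. This settles the first displayed assertion.

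For the kernel assertion I would use that the adjoint action of $\GIZ$ on the lattice $\UIZ$ factors through the $\Or(L(I)_{{\Q}})$-component of $\pi$, since $\mathrm{GL}(I) = \{\pm 1\}$ and the translations act trivially on $\UIQ$; because $\UIZ$ spans $L(I)_{{\Q}}$, this action both determines and is determined by that component. Therefore an element of $\GIZbar$ lies in $\ker(\GIZbar \to \Or(\UIZ))$ exactly when its $\pi$-image is $(\id_{L(I)}, \pm 1)$. The fibre over $(\id, +1)$ consists of transvections and contributes only the trivial class in $\GIZbar$; the fibre over $(\id, -1)$ consists of the elements $-E_{w}$, which lie in $\G$ precisely for $w \in S$. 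By the previous paragraph these form a single nontrivial class $[-E_{w}]$, of order $2$ because $(-E_{w})^{2} = E_{2w} \in \UIZ$ while $-E_{w} \notin \UIZ$. Hence the kernel is $\langle -E_{w} \rangle \simeq \Z/2$.

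Finally, for the translation statement I would invoke that $-\id$ acts trivially on $\D \subset \proj L_{{\C}}$, being scalar on projective space; consequently $-E_{w}$ and $E_{w}$ induce the same transformation on $\XI = \D/\UIZ$, namely the translation $Z \mapsto Z + w$ of the tube-domain model reduced modulo $\UIZ$, i.e.\ translation by $[w] \in \UIQZ$. An element of $\GIZbar$ acts by a pure translation if and only if its $\Or(\UIZ)$-component is trivial, that is, if and only if it lies in the kernel just computed; the nontrivial element $[-E_{w}]$ then acts by $[w] \neq 0$ and the trivial element by $0$, which yields both existence and uniqueness. The step requiring the most care is this last identification: one must verify that the action on the \emph{lattice} $\UIZ$, rather than on $L(I)_{{\Q}}$, still faithfully detects the $\Or$-component of $\pi$, and that "acting by a translation" in condition (4) corresponds exactly to the vanishing of this component in the semidirect-product description \eqref{eqn: split GIQbar}.
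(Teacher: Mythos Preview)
Your proof is correct and follows essentially the same route as the paper's: the key inputs---that $-E_{w}$ has $\pi$-image $(\id_{L(I)},-1)$, that $\GIZbar\cap\UIQZ=\{0\}$, and that pure translations on $\XI$ are exactly the elements with trivial $\Or(\UIZ)$-component---are the same ones the paper invokes, only you organize them around computing $\UIZZ$, the coset $S$, and the kernel directly rather than cycling through the implications. The one place to be a little more explicit is deducing $-\id\notin\G$ from condition~(4) alone (you assume it ``under the hypotheses''): having identified the translating element as $[-E_{w}]$ with $[w]\neq 0$, note that $w\notin\UIZ$ together with $-E_{w}\in\G$ forces $-\id\notin\G$, since otherwise $E_{w}=(-\id)(-E_{w})\in\G\cap\UIQ=\UIZ$; the paper makes this step explicit at the end of its $(4)\Rightarrow(2)$.
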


\begin{definition}
We say that the $0$-dimensional cusp $I$ is \textit{irregular} for ${\G}$ 
when these properties hold, 
and \textit{regular} otherwise. 
\end{definition}

\begin{proof}
$(1) \Rightarrow (2)$: 
We see that $-{\id}\not\in {\G}$ by ${\G}\ne {\GG}$. 
Let $E_{w}\in {\UIZZ}$ but $E_{w}\not\in {\UIZ}$. 
Since ${\GG}={\G}\sqcup -{\G}$, we have $E_{w}\in -{\G}$, 
and so $-E_{w}\in {\G}$. 
Note that ${\UIZZ}/{\UIZ}\simeq {\GG}/{\G}$ is of order $2$ and so 
${\UIZZ}/{\UIZ} = \langle E_{w} \rangle$. 

$(2) \Rightarrow (1)$: 
If $-E_{w}\in {\GIZ}$ and $-{\id}\not\in {\G}$, 
then $E_{w}\not\in{\UIZ}$ but $E_{w}\in {\UIZZ}$. 

$(2) \Rightarrow (3)$: 
Since $-E_{w}$ acts on $L(I)_{{\Q}}={\UIQ}$ trivially, 
its image in ${\GIZbar}$ is contained in the kernel of 
${\GIZbar}\to {\Or}({\UIZ})$. 

$(3) \Rightarrow (2)$: 
Recall that the kernel of ${\GIQbar}\to {\Or}({\UIZ})$ is 
\begin{equation*}
{\GL}(I)\times {\UIQZ} = {\UIQZ} \sqcup (-{\id})\cdot {\UIQZ}. 
\end{equation*}
Since ${\GIZbar}\cap {\UIQZ} = \{ 0 \}$, 
a nontrivial element of the kernel of ${\GIZbar}\to {\Or}({\UIZ})$ must be contained in $(-{\id})\cdot {\UIQZ}$, 
hence is the image of $-E_{w}$ for some $w\in L(I)_{{\Q}}$. 
This also shows that the kernel is ${\Z}/2$ generated by $-E_{w}$. 

$(2) \Rightarrow (4)$: 
%We have $(-E_{w})\circ (-E_{w}) = E_{2w} \in {\UIZ}$, so $-E_{w}$ gives an order $2$ element of ${\GIZbar}$. 
The element $-E_{w}$ of ${\GIZbar}$ acts on ${\XI}$ by the translation by $[w] \in {\UIQZ}$. 
Since $-{\id}\not\in {\G}$, we have $E_{w}\not\in {\UIZ}$. 
This means that $[w]\ne 0 \in {\UIQZ}$. 

$(4) \Rightarrow (2), (3)$: 
We choose a splitting of ${\GIQbar}$ as in \eqref{eqn: split GIQbar} 
%\begin{equation*}
%{\GIQbar} \simeq ({\Or}({\UIZ})\times {\GL}(I)) \ltimes {\UIQZ} 
%\end{equation*}
%as in \S \ref{ssec: stab 0dim Z} 
and express an element of 
${\GIZbar}\subset {\GIQbar}$ as 
$\gamma=(\gamma_{1}, \gamma_{2}, [w])$ accordingly, where 
$\gamma_{1}\in {\Or}({\UIZ})$, $\gamma_{2}\in {\rm GL}(I)$ and $[w]\in {\UIQZ}$. 
If $\gamma$ acts on ${\XI}$ by a nonzero translation, 
we must have $\gamma_{1}={\id}_{L(I)}$ and the translation is given by $[w]\in {\UIQZ}$.  
Therefore $\gamma$ is contained in the kernel of the projection to ${\Or}({\UIZ})$. 
Since ${\GIZbar}\cap {\UIQZ} = \{ 0 \}$ and $[w] \ne 0$, we have $\gamma_{2}=-{\id}_{I}$. 
Thus $\gamma=-E_{w}$. 
Finally, we have $-{\id}\not\in {\G}$, for otherwise 
$E_{w}=-\gamma$ would be contained in ${\UIZ}$ and then $[w]=0\in {\UIQZ}$. 
\end{proof}

\begin{remark}
Let $\Gamma(l)_{{\Z}}<{\GIZ}$ be as in Remark \ref{remark: GlZ}. 
By the condition (3), 
$I$ is irregular if and only if 
$-{\id}\not\in {\G}$, $\Gamma(l)_{{\Z}}\ne {\GIZ}$, and 
$\Gamma(l)_{{\Z}}$ and ${\GIZ}$ have the same image in ${\Or}({\UIZ})$. 
We do not use this characterization. 
\end{remark}

The condition (2) is useful for explicit calculation (\S \ref{sec: example}). 
We give some immediate consequences. 

\begin{corollary}
The group ${\G}$ has no irregular cusp 
when $-{\id}\in {\G}$ or when ${\G}$ is neat 
or when ${\G}<{\rm SO}^{+}(L)$ with $b$ odd. 
\end{corollary}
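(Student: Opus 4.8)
The plan is to reduce all three cases to condition (2) of Proposition~\ref{prop: characterize 0dim irregular}, which asserts that a cusp $I$ can be irregular for ${\G}$ only if $-{\id}\notin{\G}$ and $-E_{w}\in{\GIZ}$ for some $w\in L(I)_{{\Q}}$. So in each case I would assume, for contradiction, that some cusp $I$ is irregular, fix such a $w$, and examine the element $-E_{w}\in{\GIZ}\subset{\G}$. The common input is that $E_{w}$ is unipotent, which is built into the relation $E_{w}\circ E_{w'}=E_{w+w'}$ together with the isomorphism of $L(I)_{{\Q}}$ onto the unipotent group ${\UIQ}$: every eigenvalue of $E_{w}$ equals $1$, and hence every eigenvalue of $-E_{w}$ equals $-1$ and $\det E_{w}=1$.

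The case $-{\id}\in{\G}$ is immediate: all the equivalent conditions of Proposition~\ref{prop: characterize 0dim irregular} carry the requirement $-{\id}\notin{\G}$ (explicitly in (2) and (3), and via ${\G}\neq{\GG}$ in (1)), so no cusp can be irregular and there is nothing further to prove.

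For the neat case, I would observe that every eigenvalue of $-E_{w}$ equals $-1$, as noted above. But a neat group contains no element having a nontrivial root of unity among its eigenvalues (equivalently, the multiplicative subgroup of $\overline{{\Q}}^{\times}$ generated by the eigenvalues of each element is torsion-free), whereas $-1$ is precisely such an eigenvalue of $-E_{w}$. This contradicts neatness of ${\G}$ and rules out any irregular cusp. For the last case, ${\G}<{\rm SO}^{+}(L)$ with $b$ odd, I would compute the determinant: since $\det E_{w}=1$ we get $\det(-E_{w})=\det(-{\id}_{L})=(-1)^{\dim L}=(-1)^{b+2}=-1$ because $b$ is odd. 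Hence $-E_{w}\notin{\rm SO}^{+}(L)$, contradicting $-E_{w}\in{\GIZ}\subset{\G}\subset{\rm SO}^{+}(L)$, so again no cusp is irregular.

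The only slightly delicate point, and the one I would be careful to state correctly, is the invocation of neatness: it is the eigenvalues of $-E_{w}$ (all equal to $-1$), rather than $-E_{w}$ itself, that must generate a torsion-free group. Note that $-E_{w}$ need not be torsion, since $(-E_{w})^{2}=E_{2w}\neq{\id}$ for $w\neq 0$; so one cannot simply appeal to torsion-freeness of $-E_{w}$ and must use the eigenvalue formulation. Beyond this, each of the three cases is a one-line consequence of the unipotence of $E_{w}$ together with condition (2), so no genuine obstacle arises.
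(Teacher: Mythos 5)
Your proof is correct. The first and third cases coincide with the paper's own argument: the case $-{\id}\in{\G}$ is dismissed as immediate, and the ${\rm SO}^{+}(L)$ case is the same determinant computation $\det(-E_{w})=(-1)^{b+2}=-1$. In the neat case, however, you take a genuinely different route. The paper argues in the subquotient ${\GIZbar}={\GIZ}/{\UIZ}$: since $(-E_{w})^{2}=E_{2w}\in{\UIQ}\cap{\G}={\UIZ}$, the image of $-E_{w}$ in ${\GIZbar}$ is an element of order $2$, while neatness of ${\G}$ forces ${\GIZbar}$ to be torsion-free; this relies on the standard fact that the image of a neat group under a morphism of algebraic groups (here the projection $\pi$ of \eqref{eqn: canonical sequence GIQ}) is again neat, hence torsion-free. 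You instead apply the eigenvalue definition of neatness directly to the element $-E_{w}\in{\G}$: the Eichler transvection $E_{w}$ is unipotent, so every eigenvalue of $-E_{w}$ equals $-1$, which is torsion in ${\C}^{\times}$. Your version is more self-contained, since it never invokes the behaviour of neatness under subquotients, and your explicit warning that $-E_{w}$ itself has infinite order (because $(-E_{w})^{2}=E_{2w}\ne{\id}$ for $w\ne 0$) pinpoints exactly the subtlety that the paper's phrase ``an element of finite order like $-E_{w}$'' glosses over: it is only the image of $-E_{w}$ in ${\GIZbar}$, not $-E_{w}$ itself, that is torsion. What the paper's formulation buys is brevity and consistency with its systematic use of ${\GIZbar}$ elsewhere (e.g.\ in Proposition \ref{prop: characterize 0dim irregular}); what yours buys is an argument at the level of the element itself, requiring only the definition of neatness.
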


\begin{proof}
The case $-{\id}\in {\G}$ is obvious. 
When ${\G}$ is neat, the subquotient ${\GIZbar}$ is torsion-free, 
so it does not contain an element of finite order like $-E_{w}$. 
When $b$ is odd, $-E_{w}$ has determinant $(-1)^{b+2}=-1$,  
so a subgroup of ${\rm SO}^{+}(L)$ never contains such an element. 
\end{proof}

\begin{corollary}\label{cor: SO restrict} 
When $b$ is even, ${\G}$ has an irregular cusp if and only if 
${\G}\cap {\rm SO}^{+}(L)$ has an irregular cusp. 
\end{corollary}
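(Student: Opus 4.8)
The plan is to reduce everything to the characterization (2) of Proposition \ref{prop: characterize 0dim irregular} together with a single determinant computation. Since $\G$ and $\G':=\G\cap{\rm SO}^{+}(L)$ act on the same domain $\D$, they have the same set of $0$-dimensional cusps, so it suffices to fix a rank $1$ primitive isotropic sublattice $I$ and to show, under the assumption that $b$ is even, that $I$ is irregular for $\G$ if and only if it is irregular for $\G'$.

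By Proposition \ref{prop: characterize 0dim irregular}(2), irregularity of $I$ for $\G$ means $-{\id}\notin \G$ together with $-E_{w}\in \GIZ$ for some $w\in L(I)_{\Q}$, and irregularity for $\G'$ means the same two conditions with $\G$ replaced by $\G'$. The whole point will be that, when $b$ is even, both $-{\id}_{L}$ and every $-E_{w}$ already lie in ${\rm SO}^{+}(L)$, so that passing from $\G$ to $\G'=\G\cap{\rm SO}^{+}(L)$ changes neither condition. Concretely, I would first record that $E_{w}\in \GIQ$ is unipotent, hence $\det E_{w}=1$, and that $\det(-{\id}_{L})=(-1)^{b+2}=(-1)^{b}=1$ since $b$ is even; both elements visibly preserve $\D$ (the scalar $-{\id}_{L}$ acts trivially on $\proj L_{\C}$), so $-{\id}_{L}\in {\rm SO}^{+}(L)$ and $-E_{w}=(-{\id}_{L})\circ E_{w}\in {\rm SO}^{+}(L)$.

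Granting this, the two equivalences fall out immediately. For the first clause, $-{\id}_{L}\in {\rm SO}^{+}(L)$ gives $-{\id}\in \G \Leftrightarrow -{\id}\in \G\cap{\rm SO}^{+}(L)=\G'$. For the second clause, since $-E_{w}$ automatically lies in $\GIQ$ and in ${\rm SO}^{+}(L)$, we have $-E_{w}\in \GIZ=\GIQ\cap\G$ if and only if $-E_{w}\in \GIQ\cap\G'$, the latter being the $\G'$-analogue of $\GIZ$. Combining the two, condition (2) holds for $\G$ at $I$ exactly when it holds for $\G'$ at $I$, which is the desired cusp-by-cusp equivalence and hence the corollary.

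The argument is short, so there is no serious obstacle; the only substantive point is the determinant bookkeeping, and it is precisely here that the parity hypothesis on $b$ is essential. For $b$ odd one has $\det(-E_{w})=(-1)^{b}=-1$, so $-E_{w}\notin {\rm SO}^{+}(L)$ and the implication can fail --- consistent with the preceding corollary, which shows that subgroups of ${\rm SO}^{+}(L)$ have no irregular cusp when $b$ is odd. I would therefore flag that the stated equivalence is genuinely a feature of even $b$, and make sure the write-up isolates the computation $\det(-E_{w})=(-1)^{b}$ as the crux.
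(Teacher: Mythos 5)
Your proposal is correct and is essentially the paper's own proof: the paper's argument is exactly the one-line observation that when $b$ is even both $-{\id}$ and $-E_{w}$ lie in ${\rm SO}^{+}(L)$ (since $\det(-E_{w})=(-1)^{b+2}$ and $E_{w}$ is unipotent), after which the equivalence follows from characterization (2) of Proposition \ref{prop: characterize 0dim irregular}. Your write-up just makes the determinant bookkeeping and the cusp-by-cusp transfer of both conditions explicit.
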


\begin{proof}
When $b$ is even, both $-{\id}$ and $-E_{w}$ are contained in ${\rm SO}^{+}(L)$. 
\end{proof}

\begin{corollary}\label{cor: subgroup overgroup}
If ${\G}$ has an irregular cusp, 
any $\Gamma' < {\OL}$ with $\Gamma' \supset {\G}$ and $-{\id}\not\in \Gamma'$ has an irregular cusp. 
Equivalently, if $-{\id}\not\in {\G}$ and ${\G}$ has no irregular cusp, 
any subgroup of ${\G}$ of finite index has no irregular cusp. 
\end{corollary}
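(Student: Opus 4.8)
The plan is to reduce everything to condition (2) of Proposition \ref{prop: characterize 0dim irregular}, which is the characterization designed for explicit computation: a fixed cusp $I$ is irregular for a group precisely when that group omits $-\id$ yet contains an element of the form $-E_{w}$ stabilizing $I$. The virtue of this formulation here is that both ingredients are monotone in the right direction under enlarging the group --- omitting $-\id$ is a hypothesis we impose on the larger group, and containing $-E_{w}$ is preserved when we pass to a supergroup --- and that the witnessing data (the cusp $I$, the vector $w\in L(I)_{\Q}$, the transvection $E_{w}$) depend only on $L$ and not on the group. I would first record the trivial but necessary bookkeeping point that, since $\G$ has finite index in $\OL$, any $\Gamma'$ with $\G\subset\Gamma'\subset\OL$ again has finite index, so the whole framework of \S\ref{sec: irr 0dim cusp} applies verbatim to $\Gamma'$.

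Next I would prove the forward assertion. Assuming $I$ is irregular for $\G$, Proposition \ref{prop: characterize 0dim irregular}(2) supplies $-\id\notin\G$ together with some $w\in L(I)_{\Q}$ for which $-E_{w}\in\GIZ$. Given $\Gamma'\supset\G$ with $-\id\notin\Gamma'$, the element $-E_{w}$ stabilizes $I$ (the transvection $E_{w}$ does by construction, and so does $-\id$), hence lies in $\GIQ$; and $-E_{w}\in\GIZ\subset\Gamma'$, so $-E_{w}\in\GIQ\cap\Gamma'$. Combined with $-\id\notin\Gamma'$, condition (2) is satisfied for $\Gamma'$ at the same cusp $I$, so $I$ is irregular for $\Gamma'$.

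The second formulation is simply the contrapositive with the roles of the groups swapped, which I would spell out for clarity: if $-\id\notin\G$ and $\G$ has no irregular cusp, and $\Gamma_{0}\subset\G$ has finite index, then were some $I$ irregular for $\Gamma_{0}$ we would obtain $w$ with $-E_{w}\in\GIQ\cap\Gamma_{0}\subset\GIQ\cap\G=\GIZ$ while $-\id\notin\G$, forcing $I$ to be irregular for $\G$ by condition (2) --- a contradiction. I do not expect any genuine obstacle: all the substance was already absorbed into the equivalences of Proposition \ref{prop: characterize 0dim irregular}, and the only care needed is to phrase condition (2) as an assertion about the single, group-independent cusp $I$, so that one and the same $I$ witnesses irregularity for both groups.
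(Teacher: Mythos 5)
Your proof is correct and is exactly the argument the paper intends: the paper states this corollary without proof, as an immediate consequence of condition (2) of Proposition \ref{prop: characterize 0dim irregular}, and your write-up simply makes explicit that the witnessing data (the cusp $I$ and the element $-E_{w}\in {\GIZ}$) are inherited by any overgroup $\Gamma'$ avoiding $-{\id}$, with the second formulation obtained as the contrapositive.
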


The lattice ${\UIZZ}={\UIQ}\cap {\GG}$ is the projection image of  
\begin{equation*}
U(I)_{{\Z}}^{\star} = (\{ \pm {\id} \} \cdot {\UIQ}) \cap {\G} 
= {\ker}({\GIZ}\to {\Or}({\UIZ})) 
\end{equation*} 
in ${\UIQ}$. 
%We have ${\UIZZ}\simeq U(I)_{{\Z}}^{\star}$ when $-{\id}\not\in{\G}$, and 
%${\UIZZ}\simeq U(I)_{{\Z}}^{\star}/\pm{\id}$ when $-{\id}\in{\G}$. 
Thus ${\UIZZ}$ is the lattice of translation in the ${\GIZ}$-action on the tube domain model. 
We have 
\begin{equation}\label{eqn: UIZstar}
U(I)_{{\Z}}^{\star}/{\UIZ} = 
\begin{cases}
\langle -{\id} \rangle \simeq {\Z}/2 & -{\id}\in {\G} \\ 
\{ 1 \} & -{\id}\not\in {\G}, I \; \textrm{regular} \\ 
\langle -E_{w} \rangle \simeq {\Z}/2 & I \; \textrm{irregular} 
\end{cases}
\end{equation}
This gives yet another characterization of irregularity: 
$-{\id}\not\in {\G}$ and ${\UIZ}\ne U(I)_{{\Z}}^{\star}$. 

As we will explain in \S \ref{ssec: modular form}, 
${\UIZ}$ is the lattice for Fourier expansion of ${\G}$-modular forms around $I$. 
Thus irregular $0$-dimensional cusps are those cusps 
whose lattice of translation is larger than the lattice for Fourier expansion.

\subsection{Irregular boundary divisors}\label{ssec: irregular boundary divisor 0dim}

%Let $I$ be an irregular $0$-dimensional cusp for ${\G}$. 
Let $\Sigma=(\sigma_{\alpha})$ be a ${\GIZ}$-admissible fan in $U(I)_{{\R}}$, 
and ${\XI}\hookrightarrow {\XIcpt}$ be the partial compactification 
defined in \S \ref{ssec: toroidal cpt 0dim}. 
For a ray $\sigma$ in $\Sigma$ we denote by $D(\sigma)\subset {\XIcpt}$ 
the corresponding boundary divisor. 
When $I$ is irregular, these boundary divisors are divided into two types as follows. 

\begin{proposition}\label{prop: irregular ray}
Let $I$ be an irregular $0$-dimensional cusp for ${\G}$. 
Let $-E_{w}\in {\GIZ}$. 
The following conditions for a ray $\sigma$ in $\Sigma$ are equivalent: 
\begin{enumerate}
\item $\sigma \cap {\UIZ} \ne \sigma \cap {\UIZZ}$. 
\item $-E_{w}$ acts trivially on the boundary divisor $D(\sigma)$. 
\item $D(\sigma)$ is fixed by some nontrivial element of ${\GIZbar}$. 
\end{enumerate}
When these hold, the element in $(3)$ is given by $-E_{w}$. 
In particular, it is unique, independent of $\sigma$, and of order $2$. 
\end{proposition}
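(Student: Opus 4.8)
The plan is to read every condition off the torus-embedding description of \S\ref{ssec: toroidal cpt 0dim} and to reduce the whole statement to a single question: when does the translation induced by $-E_w$ die on the boundary torus? Recall from Proposition \ref{prop: characterize 0dim irregular} that $-E_w$ acts on $\XI\subset T(I)$ as the translation by $[w]\in\UIQZ$, and that $\UIZZ=\langle\UIZ,w\rangle$ with $2w\in\UIZ$ but $w\notin\UIZ$, so that $[w]$ has order $2$. With this in hand the implication $(2)\Rightarrow(3)$ is immediate, since $-E_w$ is then a nontrivial element of $\GIZbar$ fixing $D(\sigma)$. The substance lies in $(1)\Leftrightarrow(2)$, which is an explicit lattice computation, and in $(3)\Rightarrow(2)$ together with the uniqueness assertions, where the only delicate point is an isotropic ray.

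For $(1)\Leftrightarrow(2)$ I would take $v$ to be the primitive generator of $\sigma\cap\UIZ$, so that $D(\sigma)=O(\sigma)$ is the quotient torus with cocharacter lattice $\UIZ/\Z v$ and character lattice $\sigma^{\perp}\cap U(I)_{\mathbb{Z}}^{\vee}$. The translation by $[w]$ descends to the translation of $O(\sigma)$ by the image of $[w]$, which is trivial exactly when $w\in\UIZ+\Q v$; this is condition $(2)$. On the other hand, since $[\UIZZ:\UIZ]=2$, condition $(1)$ says precisely that the primitive generator of $\sigma$ inside $\UIZZ$ is $\tfrac{1}{2}v$, i.e. $\tfrac{1}{2}v\in\UIZZ$, i.e. $w-\tfrac{1}{2}v\in\UIZ$. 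The equivalence is then a one-line check: $w-\tfrac{1}{2}v\in\UIZ$ clearly gives $w\in\UIZ+\Q v$, and conversely writing $w=n+\lambda v$ with $n\in\UIZ$ and using $2w\in\UIZ$ together with the primitivity of $v$ forces $2\lambda\in\Z$, hence (since $w\notin\UIZ$) $\lambda\in\tfrac{1}{2}+\Z$ and $w-\tfrac{1}{2}v\in\UIZ$.

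For $(3)\Rightarrow(2)$ I would take a nontrivial $\gamma\in\GIZbar$ fixing $D(\sigma)$ pointwise and write $\gamma=(\gamma_1,\gamma_2,[u])$ after the splitting \eqref{eqn: split GIQbar}. Acting trivially on $O(\sigma)$ forces $\gamma_1$ to preserve $\sigma$ and to fix the character lattice $\sigma^{\perp}\cap U(I)_{\mathbb{Z}}^{\vee}$ pointwise; via the quadratic form this means $\gamma_1$ fixes the hyperplane $v^{\perp}$ pointwise. The goal is to upgrade this to $\gamma_1=\id$, for then $\gamma$ lies in $\ker(\GIZbar\to{\Or}({\UIZ}))=\langle -E_w\rangle\simeq\Z/2$ by Proposition \ref{prop: characterize 0dim irregular}, whence $\gamma=-E_w$ and uniqueness, order $2$, and independence of $\sigma$ all follow at once. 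When $\sigma$ is non-isotropic this is automatic, since an orthogonal $\gamma_1$ fixing a nondegenerate hyperplane pointwise is either $\id$ or the reflection in $v$, and the reflection reverses $\sigma$ and is thus excluded. The main obstacle I anticipate is the isotropic case, where $\gamma_1$ could a priori be a nontrivial unipotent transvection $x\mapsto x+c(x,v)v$, which also fixes $v^{\perp}$ pointwise; I would exclude it using that the inertia group of $D(\sigma)$ in $\GIZbar$ is finite — equivalently that $\gamma$ acts on the conormal line of $D(\sigma)$ by a root of unity — whereas such a transvection has infinite order and acts on the conormal by the nonconstant unit $\chi^{cv}$. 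This is precisely where the finiteness of the ramified cover $\XIcpt\to\FGcpt$ along $D(\sigma)$ must be invoked, and it is the step I expect to require the most care.
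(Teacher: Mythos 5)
Your treatment of $(1)\Leftrightarrow(2)$ and $(2)\Rightarrow(3)$ is correct and coincides with the paper's, and your overall plan for $(3)\Rightarrow(2)$ --- force the ${\Or}({\UIZ})$-component $\gamma_{1}$ to be trivial and then quote Proposition \ref{prop: characterize 0dim irregular} to conclude $\gamma=-E_{w}$, from which uniqueness, order $2$ and independence of $\sigma$ follow at once --- is also the paper's. The one place you diverge is the isotropic ray, and the obstacle you anticipate there is illusory: the unipotent map $\gamma_{1}(x)=x+c\,(x,v)v$ with $v$ isotropic and $c\neq 0$ is \emph{not} an orthogonal transformation, since for $u\notin v^{\perp}$ one computes
\begin{equation*}
(\gamma_{1}u,\gamma_{1}u) = (u,u) + 2c\,(u,v)^{2} + c^{2}(u,v)^{2}(v,v) = (u,u)+2c\,(u,v)^{2} \neq (u,u).
\end{equation*}
So no such element exists in ${\Or}({\UIQ})$, and a single linear-algebra argument covers both kinds of rays: if $\gamma_{1}\in {\Or}({\UIQ})$ fixes $v^{\perp}$ pointwise, then for $u\notin v^{\perp}$ we get $\gamma_{1}u-u\in (v^{\perp})^{\perp}={\Q}v$, say $\gamma_{1}u=u+cv$, and $(u,u)=(\gamma_{1}u,\gamma_{1}u)=(u,u)+2c(u,v)+c^{2}(v,v)$ forces $c=0$ when $v$ is isotropic, while for $v$ anisotropic it leaves only the identity and the reflection in $v$, the latter excluded because $\gamma_{1}$ preserves $\sigma$. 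This is exactly the paper's step ``$\gamma_{1}$ acts trivially on $\Lambda_{\sigma}$ and $\Lambda_{\sigma}^{\perp}$, hence $\gamma_{1}={\id}$'', which requires no case distinction and no input beyond orthogonality.

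Your fallback argument is salvageable but misplaced. The fact you actually need --- that elements of ${\GIZbar}$ acting trivially on $D(\sigma)$ have finite order --- does follow from proper discontinuity of the ${\GIZbar}$-action on the local chart ${\XIcpt}$, which is part of the theory of \cite{AMRT} and could be cited on its own. But invoking ``the finiteness of the ramified cover ${\XIcpt}\to{\FGcpt}$'' is a forward reference: in this paper ${\FGcpt}$ is only assembled in \S\ref{sec: toroidal cpt}, and the description of its boundary ramification (Proposition \ref{prop: boundary ramification divisor}) is itself deduced from the present proposition via Corollary \ref{cor: irregular 0dim ramify}, so leaning on that statement here would make the logic circular. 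Since the case this machinery is designed to exclude is empty, the cleanest repair is simply to delete the detour.
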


\begin{definition}\label{def: irregular ray}
When these properties hold, we call $\sigma$ an \textit{irregular ray} and 
$D(\sigma)$ an \textit{irregular boundary divisor}. 
Otherwise we call $\sigma$ \textit{regular}. 
For the sake of completeness, we call any ray $\sigma$ \textit{regular} when $I$ is regular. 
\end{definition}

\begin{proof}
$(1) \Leftrightarrow (2)$: 
Recall that $-E_{w}$ acts on ${\XI}\subset T(I)$ as the translation by $[w]\in {\UIQZ}$. 
A Zariski open set of $D(\sigma)$ is the quotient torus (or its analytic open set) 
defined by the quotient lattice ${\UIZ}/\Lambda_{\sigma}$ 
where $\Lambda_{\sigma}= {\R}\sigma \cap {\UIZ}$. 
Hence $-E_{w}$ acts on $D(\sigma)$ as the translation by 
the image of $[w]$ in ${\UIQ}/({\UIZ}+(\Lambda_{\sigma})_{{\Q}})$. 
This is trivial if and only if 
$w\in {\UIZ}+(\Lambda_{\sigma})_{{\Q}}$, 
%or in other words, ${\UIZZ}\subset {\UIZ}+(\Lambda_{\sigma})_{\Q}$, 
which in turn is equivalent to 
$\Lambda_{\sigma} \ne {\R}\sigma \cap {\UIZZ}$. 
In this case $-E_{w}$ acts by $-1$ on the normal torus 
$(\Lambda_{\sigma})_{{\C}}/\Lambda_{\sigma}\simeq {\C}^{\times}$. 

$(2) \Rightarrow (3)$ is obvious. 

$(3) \Rightarrow (2)$: 
Suppose that $\gamma\in {\GIZbar}$ acts trivially on $D(\sigma)$. 
Let $\gamma_{1}$ be the image of $\gamma$ in ${\Or}({\UIZ})$. 
Then $\gamma_{1}$ must preserve $\sigma\cap {\UIZ}$ and act trivially on ${\UIZ}/\Lambda_{\sigma}$. 
Hence $\gamma_{1}$ acts trivially on $\Lambda_{\sigma}$ and $\Lambda_{\sigma}^{\perp}$, 
and so $\gamma_{1}={\id}$. 
This implies that $\gamma$ is contained in the kernel of ${\GIZbar}\to {\Or}({\UIZ})$, 
whence $\gamma=-E_{w}$ by Proposition \ref{prop: characterize 0dim irregular}. 
Therefore $-E_{w}$ acts trivially on $D(\sigma)$. 
\end{proof}

\begin{corollary}\label{cor: irregular 0dim ramify}
When $I$ is irregular, the quotient map ${\XIcpt}\to {\XIcpt}/{\GIZbar}$ 
is ramified along the irregular boundary divisors with ramification index $2$, 
caused by the common subgroup $\langle -E_{w} \rangle \simeq {\Z}/2$ of ${\GIZbar}$,  
and not ramified along other boundary divisors. 
When $I$ is regular, ${\XIcpt}\to {\XIcpt}/{\GIZbar}$ is not ramified along any boundary divisor. 
\end{corollary}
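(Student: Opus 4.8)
The plan is to reduce the statement to a local computation of the generic inertia along each boundary divisor and then read off the answer from Proposition \ref{prop: irregular ray} and Proposition \ref{prop: characterize 0dim irregular}. Near a generic point of $D(\sigma)$ the toric chart $\XIcpt$ is smooth and, as recalled in \S\ref{ssec: toroidal cpt 0dim}, looks like $\C\times(\C^{\times})^{b-1}$ with $D(\sigma)$ the divisor $\{0\}\times(\C^{\times})^{b-1}$. Consequently the ramification of $\XIcpt\to\XIcpt/\GIZbar$ along $D(\sigma)$ is governed by the subgroup of $\GIZbar$ fixing $D(\sigma)$ pointwise together with its induced action on the normal $\C$-coordinate, and the ramification index is the order of the image of this subgroup in that normal action. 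The whole proof is then a matter of identifying this inertia in each case.

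First I would treat the case where $I$ is irregular. For an irregular ray $\sigma$, Proposition \ref{prop: irregular ray} identifies the pointwise stabilizer of $D(\sigma)$ in $\GIZbar$ as exactly $\langle -E_{w}\rangle\simeq\Z/2$, and its proof already records that $-E_{w}$ acts by $-1$ on the normal torus $(\Lambda_{\sigma})_{\C}/\Lambda_{\sigma}\simeq\C^{\times}$; hence the normal action has order $2$ and $\XIcpt\to\XIcpt/\GIZbar$ is ramified along $D(\sigma)$ with index $2$, caused by the common subgroup $\langle -E_{w}\rangle$. For a regular ray $\sigma$, the same proposition says no nontrivial element of $\GIZbar$ fixes $D(\sigma)$, so the inertia is trivial and there is no ramification. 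This settles the irregular-$I$ case directly.

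Next I would treat the case where $I$ is regular, which I expect to be the main obstacle, because here a nontrivial element of $\GIZbar$ can fix $D(\sigma)$ pointwise without producing any ramification. Suppose $\gamma\in\GIZbar$ fixes $D(\sigma)$ pointwise. The computation in the implication $(3)\Rightarrow(2)$ of the proof of Proposition \ref{prop: irregular ray}—which uses only that $\gamma$ fixes $D(\sigma)$ and not the irregularity of $I$—forces the image $\gamma_{1}$ of $\gamma$ in $\Or(\UIZ)$ to be the identity, so $\gamma$ lies in the kernel of $\GIZbar\to\Or(\UIZ)$. By condition $(3)$ of Proposition \ref{prop: characterize 0dim irregular}, regularity of $I$ means either $-\id\notin\G$ with this kernel trivial, or $-\id\in\G$. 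If $-\id\notin\G$ the kernel is trivial, so $\gamma$ is trivial and the inertia vanishes. If $-\id\in\G$, I would use the identification of the kernel of $\GIQbar\to\Or(\UIZ)$ from the proof of Proposition \ref{prop: characterize 0dim irregular}: any nontrivial such kernel element lying in $\GIZbar$ is the image of some $-E_{w}$, and $-E_{w}\in\G$ together with $-\id\in\G$ gives $E_{w}\in\UIQ\cap\G=\UIZ$, i.e.\ $[w]=0$, so $\gamma$ is the image of $-\id=(\id_{L(I)},-\id_{I},0)$. Since this element acts trivially on all of $T(I)$, hence trivially on every normal coordinate, its contribution to the normal action is trivial and it produces no ramification. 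Thus in every regular case the normal action of the inertia is trivial, so $\XIcpt\to\XIcpt/\GIZbar$ is unramified along every boundary divisor, completing the argument. The essential care, and the one genuinely novel point, is precisely this last distinction: separating an element that fixes the whole space (and so lies in the kernel of the action, contributing nothing) from one that fixes the divisor but acts by $-1$ on the normal line (a genuine ramification).
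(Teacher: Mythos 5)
Your proof is correct and follows essentially the same route as the paper: the irregular case is read off from Proposition \ref{prop: irregular ray} (including the $-1$ action on the normal torus recorded in its proof), and the regular case reuses the $(3)\Rightarrow(2)$ argument to force $\gamma_{1}=\id$, then splits into $-\id\notin\G$ (kernel of $\GIZbar\to\Or(\UIZ)$ trivial by Proposition \ref{prop: characterize 0dim irregular}(3)) and $-\id\in\G$ (kernel is the image of $\pm\id$, acting trivially on $\XI$). Your extra step identifying the kernel element as $-E_{w}$ with $[w]=0$ when $-\id\in\G$ merely spells out what the paper states directly.
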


\begin{proof}
It remains to supplement the argument in the case $I$ is regular. 
If $\gamma \in {\GIZbar}$ fixes a boundary divisor, 
we see that $\gamma$ acts trivially on ${\UIZ}$ 
by the same argument as $(3)\Rightarrow (2)$ above. 
When $-{\id}\not\in {\G}$,  
${\GIZbar}\to {\Or}({\UIZ})$ is injective by the condition (3) of Proposition \ref{prop: characterize 0dim irregular}, 
so we find that $\gamma= {\id}$. 
When $-{\id}\in {\G}$, the kernel of ${\GIZbar}\to {\Or}({\UIZ})$ is $\{ \pm {\id} \}$, 
so $\gamma=\pm {\id}$, which acts trivially on ${\XI}$. 
\end{proof}

%%%%%
%%Example
%%%%%

\section{Examples}\label{sec: example}

In this section we study some examples of groups with/without irregular cusp. 
Logically this section should be read after \S \ref{sec: irr 1dim cusp} 
where we complete the discussion of irregular $1$-dimensional cusps. 
But we encourage the reader to read this section just after \S \ref{sec: irr 0dim cusp} 
for the following two reasons.  
First, most part of \S \ref{sec: 1dim cusp} and \S \ref{sec: irr 1dim cusp} is designed for 
\S \ref{sec: toroidal cpt} and \ref{sec: cusp form criterion}, 
while the only result from \S \ref{sec: 1dim cusp} and \S \ref{sec: irr 1dim cusp} 
we need in this section is 
Corollary \ref{cor: 1dim reduced to 0dim}, 
which just says that 
${\G}$ has no irregular $1$-dimensional cusp if it has no irregular $0$-dimensional cusp. 
Second, it is Proposition \ref{prop: characterize 0dim irregular} (2) that is frequently used in this section, 
so we do not want to put this section too far from it. 

\textit{We assume that the lattice $L$ is even in this section.} 
The quotient $A_{L}=L^{\vee}/L$ is called the \textit{discriminant group} of $L$,  
equipped with a canonical quadratic form $A_{L}\to {\Q}/2{\Z}$ 
called the \textit{discriminant form}. 
If $I$ is a rank $1$ primitive isotropic sublattice of $L$, 
we write ${\rm div}(I)$ for the positive generator of the ideal $(I, L)\subset {\Z}$. 
Then $I^{\ast}={\rm div}(I)^{-1}I$ is primitive in $L^{\vee}$, 
and we have a canonical isometry 
$I^{\perp}\cap L^{\vee}/I^{\ast} \simeq (I^{\perp}/I)^{\vee}$.

\subsection{Stable orthogonal groups}\label{ssec: stable orthogonal}

Let $L$ be an even lattice of signature $(2, b)$. 
Let ${\Ost}<{\OL}$ be the kernel of the reduction map ${\OL}\to {\rm O}(A_{L})$, 
called the \textit{stable orthogonal group} or the \textit{discriminant kernel}. 
%The following properties should be more or less well-known. 
%Part of the following was 
The following was asserted in \cite{Ma} p.901. 
We supplement the proof for the sake of completeness. 

\begin{lemma}\label{lem: UIZ=LI}
Let $I$ be a rank $1$ primitive isotropic sublattice of $L$. 
For ${\G}={\Ost}$ we have ${\UIZ}=L(I)$.  
\end{lemma}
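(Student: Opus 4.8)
The plan is to prove the two inclusions $L(I)\subseteq\UIZ$ and $\UIZ\subseteq L(I)$ separately by direct computation with the Eichler transvection formula. First I would fix a generator $l_{0}$ of $I$, so that every element of $L(I)_{\Q}=\UIQ$ is written $m\otimes l_{0}$ with $m\in(I^{\perp}/I)_{\Q}$, and recall that $L(I)=(I^{\perp}\cap L/I)\otimes I$ consists precisely of those $m\otimes l_{0}$ whose $m$ admits a lift $n\in I^{\perp}\cap L$. Throughout I would use that $n,l_{0}\in L$ makes $(n,v)$ and $(l_{0},v)$ integral as soon as $v\in L^{\vee}$.

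For the inclusion $L(I)\subseteq\UIZ$ I would take $w=\bar{n}\otimes l_{0}$ with $n\in I^{\perp}\cap L$ and verify that $E_{w}\in\Ost$ directly. Writing $E_{w}(v)=v-(n,v)l_{0}+(l_{0},v)n-\frac{1}{2}(n,n)(l_{0},v)l_{0}$, every coefficient $(n,v)$ and $(l_{0},v)$ lies in $\Z$ whenever $v\in L^{\vee}$, and $\frac{1}{2}(n,n)\in\Z$ since $L$ is even. Hence $E_{w}(v)-v\in L$ for all $v\in L^{\vee}$, which shows at once that $E_{w}\in{\rm O}(L)$ and that $E_{w}$ acts trivially on $A_{L}$. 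As $E_{w}$ is a unipotent transvection it lies in $\Or(L_{\Q})$, so $E_{w}\in\OL$ and in fact $E_{w}\in\Ost$; being in $\UIQ$ as well, $E_{w}\in\UIQ\cap\Ost=\UIZ$.

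For the reverse inclusion I would take $E_{w}\in\UIZ$ with $w=m\otimes l_{0}$ and a lift $\tilde{m}\in I^{\perp}_{\Q}$, and extract the integrality of $m$ from the discriminant-kernel condition. The key step is to project the relation $E_{w}(v)-v\in L$, valid for all $v\in L^{\vee}$ because $E_{w}\in\Ost$, modulo $\Q l_{0}=I_{\Q}$: the two terms carrying $l_{0}$ disappear, leaving $(l_{0},v)\,m\in L/I$ for every $v\in L^{\vee}$. Since $l_{0}$ is primitive in $L$ there is $v_{0}\in L^{\vee}$ with $(l_{0},v_{0})=1$, so $m\in L/I$; combined with $m\in(I^{\perp}/I)_{\Q}$ and $(I^{\perp}/I)_{\Q}\cap(L/I)=(I^{\perp}\cap L)/I$, this forces $m\in(I^{\perp}\cap L)/I$, that is $w\in L(I)$.

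The main point, and the only genuinely delicate step, is this last inclusion. Testing $E_{w}$ only against $v\in L$, i.e. using $E_{w}\in{\rm O}(L)$ alone, would give $(l_{0},v)\,m\in L/I$ with $(l_{0},v)\in{\rm div}(I)\cdot\Z$, hence only ${\rm div}(I)\cdot m\in(I^{\perp}\cap L)/I$, which is strictly weaker than $w\in L(I)$ when ${\rm div}(I)>1$. It is precisely the freedom to test against $v\in L^{\vee}\setminus L$, afforded by the condition that $E_{w}$ fix the discriminant group $A_{L}$, that sharpens the conclusion; this is exactly where the hypothesis $\G=\Ost$ is essential.
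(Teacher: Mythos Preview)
Your proof is correct and proceeds essentially as the paper does, by direct computation with the Eichler transvection formula tested against $L^{\vee}$. The only difference is in the reverse inclusion: the paper restricts to $v\in I^{\perp}\cap L^{\vee}$, where $(l,v)=0$ makes the formula collapse to $E_{m\otimes l}(v)=v-(m,v)l$, so that $(m,v)\in\Z$ for every such $v$ and hence $m\in(I^{\perp}/I)^{\vee\vee}=I^{\perp}/I$; you instead keep a general $v\in L^{\vee}$, project modulo $I_{\Q}$, and use primitivity of $l_{0}$ to pick $v$ with $(l_{0},v)=1$. Both variants are short and equivalent in strength, and your closing remark that testing only against $L$ would yield the weaker conclusion ${\rm div}(I)\cdot m\in I^{\perp}/I$ is exactly the point.
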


\begin{proof}
%We first verify ${\UIZ}=L(I)$. 
We take a generator $l$ of $I$. 
The inclusion $L(I)\subset {\UIZ}$ can be checked by testing   
the definition of $E_{m\otimes l}(v)$ for $v\in L^{\vee}$ and $m\in I^{\perp}/I$, 
taking a lift of $m$ from $I^{\perp}\cap L$. 
Conversely, if $E_{m\otimes l}\in {\Ost}$ 
for a vector $m\in I^{\perp}_{{\Q}}/I_{{\Q}}$, 
then $E_{m\otimes l}(v) = v - (m, v)l$ must be contained in $v+L$ for $v\in I^{\perp}\cap L^{\vee}$. 
This implies that $(m, v)\in {\Z}$ for every $v\in I^{\perp}\cap L^{\vee}$, 
and so $m\in (I^{\perp}/I)^{\vee \vee}=I^{\perp}/I$.  
%
%The image of ${\Ost}\to {\Or}(L(I))$ is contained in $\widetilde{{\rm O}}^{+}(L(I))$, 
%because for $\gamma\in {\Ost}$ and $v\in I^{\perp}\cap L^{\vee}$ we have 
%$\gamma(l) - l \in I^{\perp}\cap L$, which implies that $\gamma$ acts trivially on $A_{L(I)}$. 
%To show the opposite inclusion, we take a vector $l'\in L^{\vee}$ with $(l, l')=1$ 
%and put $L'= \langle l, l' \rangle ^{\perp}\cap L $ and $L''=(L')^{\perp}\cap L$. 
%Then $L'\simeq L(I)$. 
%We identify $\gamma \in \widetilde{{\rm O}}^{+}(L(I))$ with an isometry $\gamma_{L'}$ of $L'$ and 
%consider the isometry of $\gamma_{L'}\oplus {\id}_{L''}$ of $L'\oplus L''$. 
%Since it acts trivially on $A_{L'\oplus L''}$, it preserves the overlattice $L$ of $L'\oplus L''$ 
%and acts trivially on $A_{L}$ (\cite{Ni}). 
%Thus $\gamma$ is contained in the image of ${\Ost}$. 
%The lift $\gamma \mapsto \gamma_{L'}\oplus {\id}_{L''}$ also defines 
%a section of \eqref{eqn: GIZ sequence}. 
\end{proof}

We obtain a first example of regular cusps. 

\begin{lemma}\label{lem: regular div1}
If ${\G}\supset {\Ost}$ and ${\rm div}(I)=1$, 
then $I$ is a regular cusp for ${\G}$. 
\end{lemma}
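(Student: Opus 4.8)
The plan is to use characterization (2) of Proposition \ref{prop: characterize 0dim irregular}: the cusp $I$ is irregular for $\G$ exactly when $-\id \notin \G$ and $-E_w \in \GIZ$ for some $w \in L(I)_{\Q}$. Since $\G \supset \Ost$ and $\Ost$ is a normal (indeed characteristic) subgroup, irregularity is governed by whether the element $-E_w$ can lie in $\G$; by Corollary \ref{cor: subgroup overgroup} it suffices to rule this out, and the cleanest route is to show directly that no element of the form $-E_w$ can preserve the lattice $L$ and act trivially on $A_L$ when $\mathrm{div}(I)=1$. So I would fix a generator $l$ of $I$, pick $I'$ dual to $I$, and compute the action of $-E_{m\otimes l}$ on $L^\vee$ explicitly.

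First I would exploit the hypothesis $\mathrm{div}(I)=1$. This means $(I,L)=\Z$, so there exists $v_0 \in L$ with $(l, v_0)=1$; equivalently $l$ is primitive in $L^\vee$ and the isotropic line splits off a copy of the hyperbolic plane $U$ inside $L$. Concretely I would use $\mathrm{div}(I)=1$ to see that $I^\ast = I$, so that $l$ itself already generates a primitive isotropic summand and $L = U \oplus K$ with $I \subset U$. This normal form reduces everything to understanding how $-E_w$ acts on the discriminant group, where the $U$-summand is unimodular.

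Next I would examine the reduction of $-E_w$ modulo $L$ on $A_L = L^\vee/L$. The Eichler transvection $E_{m\otimes l}$ with $m \in I^\perp/I$ acts on $I^\perp_{\Q}$ by $v \mapsto v - (m,v)l$, and on $L(I)=\UIZ$ (by Lemma \ref{lem: UIZ=LI}) it already lies in $\Ost$, hence reduces to the identity on $A_L$. Therefore $-E_w$ reduces on $A_L$ to the same thing as $-\id$, namely multiplication by $-1$. For $-E_w$ to lie in $\G \supset \Ost$ — more precisely, for $-E_w$ to witness irregularity we need $-E_w \in \GIZ \subset \G \subset \OL$, but the point is that $-E_w$ being a genuine isometry of $L$ forces a constraint. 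The key computation: when $\mathrm{div}(I)=1$ and $L$ contains $U$ with $l$ a generator of an isotropic line of $U$, the complementary vector $v_0 \in U$ with $(l,v_0)=1$ satisfies $v_0 \in L$ and $v_0 \notin I^\perp$; I would compute $-E_w(v_0)$ and observe that it cannot equal $\pm v_0$ modulo the span of $l$ unless $w=0$, because the $\mathrm{GL}(I)$-component $-\id_I$ sends $v_0$ outside its own coset. This shows $-E_w$ cannot simultaneously be a lattice isometry fixing $A_L$ pointwise and have the translation component forced by irregularity.

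The main obstacle I anticipate is handling the interplay between the $-\id_I$ factor (the $\mathrm{GL}(I)$-component, which negates $l$) and the requirement that the whole isometry preserve $L$ when $\mathrm{div}(I)=1$: one must verify that $-E_w$ acts on $A_L$ as genuine $-1$ and then argue that, because the $U$-summand is unimodular and contributes trivially to $A_L$, the element $-E_w$ actually lies in $\OL$ only when its effect on $A_L$ is trivial, which forces $2\cdot(\text{identity on the }U\text{-part})$ to be visible and contradicts $-\id \notin \G$. Put differently, the heart is to show that when $\mathrm{div}(I)=1$ the candidate witness $-E_w$ would have to coincide with $-\id$ on $A_L$, and then a direct check on the splitting $L=U\oplus K$ shows $-E_w \in \OL$ only if $w$ is such that $E_w$ glues back to make $-E_w$ equal $-\id$ up to $\Ost$, which cannot happen under $-\id \notin \G$ without $l$ being non-primitive in $L^\vee$. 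I would organize the final contradiction around the indivisibility of $l$ in $L^\vee$ that $\mathrm{div}(I)=1$ guarantees.
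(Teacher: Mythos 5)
Your proposal starts from the right place (characterization (2) of Proposition \ref{prop: characterize 0dim irregular} and the splitting $L=\langle l,l'\rangle\oplus K$ that ${\rm div}(I)=1$ provides), but the core deduction is missing and the intended endgame is wrong. The heart of the lemma is to show that if $-E_{m\otimes l}\in{\G}$ for some a priori only \emph{rational} $m\in (I^{\perp}/I)_{\Q}$, then $m$ is in fact integral. The paper gets this in one line: since $-{\id}$ preserves $L$, so does $E_{m\otimes l}$, and applying it to the isotropic vector $l'$ with $(l,l')=1$ gives $E_{m\otimes l}(l')=l'+m-\tfrac{1}{2}(m,m)l\in L$, which forces $m\in\langle l,l'\rangle^{\perp}\cap L$. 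Your substitute for this step --- that $-E_w(v_0)$ ``cannot equal $\pm v_0$ modulo the span of $l$ unless $w=0$'' --- imposes a condition that no lattice isometry is required to satisfy: membership in ${\OL}$ only demands $-E_w(v_0)\in L$, not that the coset of $v_0$ modulo $I$ be preserved, so no conclusion about $w$ follows from it. Moreover, at the point where you invoke Lemma \ref{lem: UIZ=LI} you already take $m\in I^{\perp}/I$, i.e., you assume exactly the integrality that has to be proved.

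The second problem is structural. You aim to show that no element $-E_w$ can ``preserve $L$ and act trivially on $A_L$,'' i.e., to rule out the witness altogether and reach a contradiction. That statement is false (when $A_L$ is $2$-elementary and $w$ is integral, $-E_w$ does lie in ${\Ost}$), and it is also beside the point: the lemma concerns an arbitrary ${\G}\supset{\Ost}$, whose elements need not act trivially (or by $\pm1$) on $A_L$, so no constraint on the $A_L$-action of the witness is available. The correct conclusion is not a contradiction but an implication: integrality of $m$ gives $E_{m\otimes l}\in L(I)\subset{\UIZ}\subset{\G}$ by Lemma \ref{lem: UIZ=LI}, hence $-{\id}=(-E_{m\otimes l})\circ E_{m\otimes l}^{-1}\in{\G}$, and then $I$ is regular because irregularity requires $-{\id}\not\in{\G}$ in the first clause of condition (2). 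Relatedly, Corollary \ref{cor: subgroup overgroup} cannot reduce the general case to ${\G}={\Ost}$: it propagates irregularity upward to overgroups (equivalently, regularity downward to subgroups), whereas here ${\G}$ is the overgroup of ${\Ost}$.
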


\begin{proof}
We take a generator $l$ of $I$. 
Since ${\rm div}(I)=1$, we can take an isotropic vector $l'\in L$ with $(l, l')=1$. 
We can and do identify $I^{\perp}/I$ with $\langle l, l' \rangle^{\perp}\cap L$. 
We have the splitting $L=\langle l, l' \rangle \oplus (\langle l, l' \rangle^{\perp}\cap L)$.  
Suppose $-E_{m\otimes l}\in {\G}$ for a vector 
$m\in \langle l, l' \rangle^{\perp}\cap L_{{\Q}}$. 
Then $E_{m\otimes l}$ preserves $L$, so we find that the vector 
$E_{m\otimes l}(l') = l' + m - \frac{1}{2}(m, m)l$ 
is contained in $L$. 
This implies that $m\in \langle l, l' \rangle^{\perp}\cap L=I^{\perp}/I$. 
Since ${\G}\supset {\Ost}$, we have ${\UIZ}\supset L(I)$ by Lemma \ref{lem: UIZ=LI}, 
and so $m\otimes l \in {\UIZ}$. 
This means that $E_{m\otimes l}\in {\G}$, 
and then $-{\id}\in {\G}$. 
\end{proof}

For ${\G}={\Ost}$ we have the following constraints for existence of irregular cusp.  

\begin{lemma}\label{lem: constraint irregular stable}
If $I$ is an irregular $0$-dimensional cusp for ${\G}={\Ost}$, 
then ${\rm div}(I)=2$, $A_{L(I)}$ is 2-elementary, and 
${\UIZZ}/{\UIZ}\simeq {\Z}/2$ is a subgroup of $A_{L(I)}$. 
\end{lemma}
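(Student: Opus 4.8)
The plan is to translate the irregularity of $I$ into a single explicit Eichler transvection and then read off all three conclusions from how that transvection acts on the discriminant form $A_L$. By Proposition \ref{prop: characterize 0dim irregular}(2), irregularity of $I$ for ${\G}={\Ost}$ means $-{\id}\notin{\Ost}$ and $-E_{w}\in{\GIZ}$ for some $w=m\otimes l\in L(I)_{{\Q}}$ with $E_{w}\notin{\UIZ}$. I fix a generator $l$ of $I$, so that by Lemma \ref{lem: UIZ=LI} we have ${\UIZ}=L(I)$, and $m$ is an element of $(I^{\perp}/I)_{{\Q}}$ with $m\notin I^{\perp}/I$. The two facts I would exploit are: since $-E_{w}$ and $-{\id}$ both lie in ${\rm O}(L)$, the transvection $E_{w}$ preserves $L$; and since $-E_{w}$ acts trivially on $A_L$ while $-{\id}$ acts by $x\mapsto -x$, the transvection $E_{w}$ acts on $A_L$ as $-{\id}$.

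First I would test $E_{w}$ on $I^{\perp}\cap L$. Using $E_{m\otimes l}(v)=v-(\tilde{m},v)l$ for $v\in I^{\perp}$, the requirement $E_{w}(I^{\perp}\cap L)\subset L$ forces $(\tilde m,v)\in{\Z}$ for all $v\in I^{\perp}\cap L$, i.e.\ $m\in(I^{\perp}/I)^{\vee}$. This makes $\bar m$ a well-defined nonzero element of $A_{L(I)}=(I^{\perp}/I)^{\vee}/(I^{\perp}/I)$. Combined with ${\UIZZ}/{\UIZ}=\langle E_{w}\rangle\simeq{\Z}/2$ from Proposition \ref{prop: characterize 0dim irregular}, which gives $2m\in I^{\perp}/I$, this identifies ${\UIZZ}/{\UIZ}$ with $\langle\bar m\rangle\simeq{\Z}/2\subset A_{L(I)}$, proving the last assertion.

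Next I would compute the divisor. Writing $l^{\ast}=l/{\rm div}(I)$ for a generator of $I^{\ast}$, one has $l^{\ast}\in I^{\perp}\cap L^{\vee}$ and $(\tilde m,l^{\ast})=(\tilde m,l)/{\rm div}(I)=0$, so $E_{w}(l^{\ast})=l^{\ast}$. But $E_{w}$ acts as $-{\id}$ on $A_L$, hence $\overline{l^{\ast}}=-\overline{l^{\ast}}$, giving $2\overline{l^{\ast}}=0$. Since $\overline{l^{\ast}}$ has order ${\rm div}(I)$ in $A_L$, this forces ${\rm div}(I)\mid 2$; and ${\rm div}(I)=1$ is excluded by Lemma \ref{lem: regular div1}, so ${\rm div}(I)=2$.

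Finally, for $2$-elementariness I would rerun the computation for arbitrary $v\in I^{\perp}\cap L^{\vee}$: from $E_{w}(v)=v-(\tilde m,v)l$ and $E_{w}\equiv -{\id}$ on $A_L$ I obtain $2\bar v=\overline{(\tilde m,v)l}$, and since ${\rm div}(I)=2$ makes $(\tilde m,v)l$ an integral multiple of $l^{\ast}$ modulo $l=2l^{\ast}$, the right-hand side lies in $\langle\overline{l^{\ast}}\rangle$. Thus $2\cdot(\text{image of }I^{\perp}\cap L^{\vee}\text{ in }A_L)\subset\langle\overline{l^{\ast}}\rangle$. The main obstacle is then purely lattice-theoretic: I must identify $A_{L(I)}$ with the quotient of the image of $I^{\perp}\cap L^{\vee}$ in $A_L$ by $\langle\overline{l^{\ast}}\rangle$, using the canonical isometry $I^{\perp}\cap L^{\vee}/I^{\ast}\simeq(I^{\perp}/I)^{\vee}$ recalled above; granting this, $2A_{L(I)}=0$ follows at once. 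Carrying out this identification cleanly — keeping track of $I^{\ast}$ versus $I$ and of the order-$2$ class $\overline{l^{\ast}}$ — is the one step demanding genuine care.
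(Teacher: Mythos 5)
Your proposal is correct and rests on the same core mechanism as the paper's proof: both exploit that $-E_{m\otimes l}\in{\Ost}$ acts trivially on $A_{L}$, tested against vectors of $I^{\perp}\cap L^{\vee}$, and both extract ${\rm div}(I)\mid 2$ from the class of $l^{\ast}=l/{\rm div}(I)$ (your fixed-point argument $E_{w}(l^{\ast})=l^{\ast}$ combined with $E_{w}\equiv -{\id}$ on $A_{L}$ is exactly the paper's substitution $v=l^{\ast}$ into the congruence $2v\in (m,v)l+L$). The one genuine divergence is the $2$-elementariness step. The paper never leaves $(I^{\perp}/I)^{\vee}$: from $2v-(m,v)l\in L\cap I^{\perp}_{{\Q}}=I^{\perp}\cap L$ and $(m,v)l\in L^{\vee}\cap I_{{\Q}}=I^{\ast}$ it reads off $2[v]\in I^{\perp}/I$ directly under the isometry $(I^{\perp}/I)^{\vee}\simeq I^{\perp}\cap L^{\vee}/I^{\ast}$, with no need to realize $A_{L(I)}$ inside $A_{L}$. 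You instead work in $A_{L}$ and therefore need the subquotient identification you flag at the end. That identification does hold and is routine: the image of $I^{\perp}\cap L^{\vee}$ in $A_{L}$ is $I^{\perp}\cap L^{\vee}/(I^{\perp}\cap L)$ since $I^{\perp}\cap L^{\vee}\cap L=I^{\perp}\cap L$, so its quotient by $\langle\overline{l^{\ast}}\rangle$ is $I^{\perp}\cap L^{\vee}/(I^{\perp}\cap L+I^{\ast})$, and this is also $A_{L(I)}$ because under the isometry above the sublattice $I^{\perp}/I$ corresponds to $(I^{\perp}\cap L+I^{\ast})/I^{\ast}$. (Note also that the containment $2\bar v\in\langle\overline{l^{\ast}}\rangle$ needs no separate divisibility input: $(\tilde m,v)l=v-E_{w}(v)$ lies in $L^{\vee}\cap I_{{\Q}}=I^{\ast}={\Z}l^{\ast}$ automatically, since $E_{w}$ preserves $L^{\vee}$.) So your argument closes; the paper's phrasing just spares you this last piece of bookkeeping.
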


\begin{proof}
Let $I={\Z}l$ and assume that 
$-E_{m\otimes l}\in {\Ost}$ for a vector $m$ of $I^{\perp}_{{\Q}}/I_{{\Q}}$. 
Then for any $v\in I^{\perp}\cap L^{\vee}$ 
the vector $-E_{m\otimes l}(v)=-v+(m, v)l$ must be contained in $v+L$. 
This implies that  
\begin{equation}\label{eqn: constraint irregular stable}
2v \in (m, v)l + L.  
\end{equation}
If we substitute $v=l/{\rm div}(I)$, 
we find that $(2/{\rm div}(I))l \in L$, and so ${\rm div}(I)=1$ or $2$. 
The case ${\rm div}(I)=1$ is excluded by Lemma \ref{lem: regular div1}. 
Thus ${\rm div}(I)=2$. 

If $[v]\in (I^{\perp}/I)^{\vee}$ denotes the image of $v\in I^{\perp}\cap L^{\vee}$, 
then \eqref{eqn: constraint irregular stable} means that $2[v]\in I^{\perp}/I$. 
This shows that $A_{I^{\perp}/I}$ is 2-elementary. 
Finally, \eqref{eqn: constraint irregular stable}  
implies that $(m, v)\in {\Z}$ for every $v\in I^{\perp}\cap L$, 
and so $m\in (I^{\perp}/I)^{\vee}$. 
\end{proof}

This determines the structure of $A_{L}$ when ${\Ost}$ has an irregular $0$-dimensional cusp. 

\begin{proposition}\label{prop: AL irregular stable}
If ${\Ost}$ has an irregular cusp, then 
$A_{L}\simeq {\Z}/8 \oplus ({\Z}/2)^{\oplus a}$ or 
$A_{L}\simeq ({\Z}/4)^{\oplus 2} \oplus ({\Z}/2)^{\oplus a}$ 
as abelian groups. 
\end{proposition}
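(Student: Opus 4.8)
The plan is to feed the existence of an irregular cusp into the constraints of Lemma \ref{lem: constraint irregular stable} and then read off the abelian group structure of $A_L$ from the $2$-adic behaviour of its discriminant form. Write $I=\Z l$ and set $x=[l/2]\in A_L$. Since ${\rm div}(I)=2$ by Lemma \ref{lem: constraint irregular stable}, the vector $l/2$ lies in $L^{\vee}$, generates the primitive $I^{\ast}$, and is isotropic, so $x$ has order $2$ and $q(x)=0$. I would first invoke the standard identification of discriminant forms $A_{L(I)}\cong x^{\perp}/\langle x\rangle$, where $x^{\perp}$ is taken with respect to the finite bilinear form $b$ on $A_L$; under it, the $2$-elementarity of $A_{L(I)}$ becomes the assertion that $2\xi\in\langle x\rangle$ for every $\xi\in x^{\perp}$. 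Because $b$ is nondegenerate and $x\neq 0$, the map $b(\,\cdot\,,x)\colon A_L\to \Q/\Z$ surjects onto $\tfrac12\Z/\Z$, so $x^{\perp}$ has index $2$ in $A_L$.

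From this the first structural consequence is immediate: since $2A_L\subseteq x^{\perp}$ and $2\,x^{\perp}\subseteq\langle x\rangle$, we get $4A_L\subseteq\langle x\rangle$. Hence $A_L$ is a finite abelian $2$-group of exponent dividing $8$ with $|4A_L|\le 2$, and I would decompose it into its $2$-adic Jordan constituents $A_L=A_{(2)}\perp A_{(4)}\perp A_{(8)}$ of scales $2,4,8$, each nondegenerate, with no higher scale present. The goal is then to show that $A_{(4)}$ and $A_{(8)}$ are very small.

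The technical heart is bounding the ranks of $A_{(4)}$ and $A_{(8)}$ using the isotropy of $x$ together with the condition ``$2\xi\in\langle x\rangle$ for $\xi\in x^{\perp}$''. For the scale-$8$ part, $4A_{(8)}\subseteq 4A_L\subseteq\langle x\rangle$ forces $\rk A_{(8)}\le 1$, and when $\rk A_{(8)}=1$ an order-$4$ element of $A_{(4)}$ would be orthogonal to $x\in A_{(8)}$ yet have its double outside $\langle x\rangle$; so $A_{(4)}=0$ and $A_L\cong\Z/8\oplus(\Z/2)^{a}$. For the scale-$4$ part (when $A_{(8)}=0$) the key point is that $b$ vanishes on $A_{(4)}[2]$ while $q$ is odd there, because $b(2g,2g')=4\,b(g,g')\in\Z$ but $q(2g)$ is odd. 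Writing $x=x_4+x_2$ along $A_{(4)}\perp A_{(2)}$, a counting argument on $x^{\perp}$ gives $\rk A_{(4)}\le 2$; the case $x_2\neq 0$ is ruled out, since it would either violate the displayed condition through an element $g+e$ with $e\in A_{(2)}$, $b(e,x_2)=\tfrac12$, or force $A_L$ to be $2$-elementary; and $\rk A_{(4)}=1$ contradicts the isotropy $q(x_4)=q(2g)=0$. The only surviving possibility with $A_{(8)}=0$ and $A_{(4)}\neq 0$ is therefore $\rk A_{(4)}=2$, i.e. $A_L\cong(\Z/4)^{2}\oplus(\Z/2)^{a}$.

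Finally I would dispose of the purely $2$-elementary case $A_{(4)}=A_{(8)}=0$: there $-\id$ acts as $+\id$ on $A_L$, so $-\id\in\Ost$, and then the first Corollary to Proposition \ref{prop: characterize 0dim irregular} shows that $\Ost$ has no irregular cusp, contradicting the hypothesis. Hence at least one of $A_{(4)},A_{(8)}$ is nonzero, and the rank bounds leave exactly the two asserted isomorphism types. I expect the main obstacle to be the scale-$4$ analysis in the third step: keeping track of the degenerate bilinear form on $A_{(4)}[2]$, treating the off-diagonal (hyperbolic) $\Z/4$-blocks uniformly, and excluding rank $1$ all rest on the interplay between the isotropy of $x$ and the $2$-elementary quotient, which must be handled case by case rather than through a single formula.
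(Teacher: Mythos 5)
Your proposal is correct and follows essentially the same route as the paper's proof: both feed Lemma \ref{lem: constraint irregular stable} into the isometry $A_{L(I)}\simeq x^{\perp}/x$, extract the possible group structures of $A_{L}$ from 2-elementarity of that quotient, and then discard the 2-elementary case via $-{\id}\in{\Ost}$ and the case ${\Z}/4\oplus({\Z}/2)^{\oplus a}$ by an order-4/isotropy obstruction; your Jordan-decomposition analysis is just an expanded version of the structural step the paper asserts in a single sentence. In fact your use of $q(x)=0$ to exclude ${\rk}\, A_{(4)}=1$ is a genuine supplement: the paper's stated reason (``$x^{\perp}/x$ would contain an element of order $4$'') silently excludes the subcase $x=2g$, for which $x^{\perp}/x$ \emph{is} 2-elementary, and the isotropy of $x$ is exactly what rules that subcase out.

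One misstatement you should repair: the ``key point'' that $q$ is odd on $A_{(4)}[2]$ is false in general. Since $b$ vanishes on $A_{(4)}[2]$, the restriction of $q$ to $A_{(4)}[2]$ is a homomorphism into ${\Z}/2{\Z}\subset {\Q}/2{\Z}$, so it can be nonzero on every nonzero element only when ${\rk}\, A_{(4)}\leq 1$; concretely it fails on the even blocks (in $u_{4}$ one has $q(2e_{1})=4q(e_{1})=0$) and on $2g_{1}+2g_{2}$ in a sum of two odd blocks --- and it must fail in rank $2$, because the allowed case $({\Z}/4)^{\oplus 2}\oplus({\Z}/2)^{\oplus a}$ requires an isotropic element of $A_{(4)}[2]$ (cf.\ the Remark after Proposition \ref{prop: AL irregular stable}). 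Fortunately none of your three steps (the counting bound ${\rk}\, A_{(4)}\leq 2$, the exclusion of $x_{2}\neq 0$, the exclusion of ${\rk}\, A_{(4)}=1$) uses the claim beyond the rank-one case, where nondegeneracy does force $q(g)\equiv a/4$ with $a$ odd and hence $q(2g)$ odd; so the argument stands once the blanket claim is deleted. (In the $x_{2}\neq 0$ step, also record the subcase $b(g,x_{4})=0$, where $g$ itself, rather than $g+e$, is the element of $x^{\perp}$ whose double escapes $\langle x\rangle$.)
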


\begin{proof}
Let $I={\Z}l$ be as in Lemma \ref{lem: constraint irregular stable}. 
Let $x=[l/2]\in A_{L}$. 
Since $A_{L(I)}\simeq x^{\perp}/x$ is 2-elementary, 
$A_{L}$ must be isomorphic to either 
$({\Z}/2)^{\oplus a}$ or ${\Z}/4 \oplus ({\Z}/2)^{\oplus a}$ or 
${\Z}/8 \oplus ({\Z}/2)^{\oplus a}$ or $({\Z}/4)^{\oplus 2} \oplus ({\Z}/2)^{\oplus a}$. 
The first case $A_{L}\simeq ({\Z}/2)^{\oplus a}$ cannot occur because then $-{\id}\in {\Ost}$. 
The second case $A_{L}\simeq {\Z}/4 \oplus ({\Z}/2)^{\oplus a}$ does not occur too, 
because then $x^{\perp}/x$ would contain an element of order $4$. 
\end{proof}

\begin{remark}
Further calculation shows that 
$x=[l/2]\in A_{L}$ is divisible by $4$ (and hence unique) in the case 
$A_{L}\simeq {\Z}/8 \oplus ({\Z}/2)^{\oplus a}$, 
and divisible by $2$ in the case $A_{L}\simeq ({\Z}/4)^{\oplus 2} \oplus ({\Z}/2)^{\oplus a}$. 
\end{remark}

\begin{example}\label{ex: K3}
Let $L=2U \oplus mE_{8} \oplus \langle -2d \rangle$. 
Then ${\Ost}$ has no irregular cusp when $d\ne 4$. 
We show in Proposition \ref{prop: irregular example -8} that 
${\Ost}$ indeed has an irregular cusp when $d=4$. 
When $m=2$, ${\Ost}$ is the modular group for the moduli space of polarized $K3$ surfaces of degree $2d$. 
\end{example}

\begin{example}\label{ex: K3[N]}
Let $L=2U \oplus mE_{8} \oplus \langle -2t \rangle \oplus \langle -2d \rangle$. 
Then ${\Ost}$ has no irregular cusp when 
$(t, d) \ne (4, 1), (2, 2), (1, 4)$. 
We show in \S \ref{ssec: irregular example} that 
${\Ost}$ indeed has an irregular cusp in these exceptional cases. 
When $m=2$, ${\Ost}$ is the modular group for the moduli space of 
polarized irreducible symplectic manifolds of $K3^{[t-1]}$-type 
with polarization of split type and degree $2d$ (\cite{GHS10}). 
\end{example}

\begin{example}
When $L=U\oplus 2E_{8} \oplus M$, 
where $M$ is a certain lattice of signature $(1, 2)$ and 
discriminant $d\equiv 2$ mod $6$,  
%$\begin{pmatrix} -2 & 1 & 0 \\ 1 & -2 & 1 \\ 0 & 1 & 2n \end{pmatrix}$. 
${\Ost}$ is the modular group for the moduli space of 
special cubic fourfolds of discriminant $d$ (\cite{TVA}). 
Since $A_{L}$ has length $\leq 3$ and order $d$, 
we find that ${\Ost}$ has no irregular cusp when $d \ne 8, 32$. 
\end{example} 

\begin{example}
Similarly, when $L=U\oplus 2E_{8} \oplus M$, 
where $M$ is a certain lattice of signature $(1, 2)$ and 
discriminant $d\equiv 0, 2, 4$ mod $8$, 
${\Ost}$ is the modular group for the moduli space of 
special $K3^{[2]}$-fourfolds of degree $2$ and discriminant $d$ (\cite{Pe}). 
This group has no irregular cusp when $d\ne 32$. 
\end{example}

\begin{example}
When $L=U\oplus 2E_{8}\oplus \langle 2d \rangle$, 
${\Ost}$ is the modular group for the moduli space of 
$U\oplus \langle -2d \rangle$-polarized $K3$ surfaces studied in \cite{FM}. 
This group has no irregular cusp when $d\ne 4$. 
\end{example}

\subsection{O'Grady 10}\label{ssec: OG10}

In this subsection we let $L$ be an even lattice of the form 
$L= M \oplus \langle -2d \rangle$ with 
$M$ of signature $(2, b-1)$. 
%$K$ negative-definite and $A_{K}\simeq {\Z}/p$ for an odd prime $p$. 
We consider the group 
\begin{equation*}
{\G} = \{ \: \gamma \in {\OL} \: | \: 
\gamma|_{A_{M}}=\pm{\id}, \: \gamma|_{A_{\langle -2d \rangle}}={\id} \; \}. 
\end{equation*}
Then ${\G}$ contains ${\Ost}$ with index $\leq 2$, 
with ${\G}={\Ost}$ if and only if $A_{M}$ is 2-elementary. 
We have $-{\id}\in {\G}$ if and only if $d=1$. 
When $M=2U\oplus 2E_{8} \oplus A_{2}$, 
${\G}$ is the modular group for the moduli space of polarized O'Grady $10$ manifolds 
with polarization of split type and degree $2d$ (\cite{GHS11}). 

\begin{proposition}\label{prop: OG10}
The group ${\G}$ has no irregular cusp when $d\ne 2, 4$. 
\end{proposition}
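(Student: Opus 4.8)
The plan is to reduce, via Proposition \ref{prop: characterize 0dim irregular}, to a statement about the action on the discriminant group $A_{L}=A_{M}\oplus A_{\langle -2d \rangle}$, and then to exploit that this action is \emph{unipotent} relative to the isotropic line spanning the cusp. So I would assume that some $0$-dimensional cusp $I=\Z l$ is irregular for $\G$. By Proposition \ref{prop: characterize 0dim irregular} (2) this means $-{\id}\notin \G$, hence $d\geq 2$ (as $-{\id}\in \G$ exactly when $d=1$), together with $-E_{w}\in \GIZ$ for some $w=m\otimes l\in L(I)_{\Q}$. Let $T\in {\rm O}(A_{L})$ denote the action of $E_{w}$ on $A_{L}$. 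Writing $g=-E_{w}\in \G$, the definition of $\G$ gives $g|_{A_{M}}=\pm{\id}$ and $g|_{A_{\langle -2d \rangle}}={\id}$; since $-{\id}$ acts as $-{\id}$ on all of $A_{L}$, we have $T=-(g|_{A_{L}})$, so $T$ is an \emph{involution} with $T|_{A_{\langle -2d \rangle}}=-{\id}$.

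Next I would record the unipotency. Put $H=\langle [I^{\ast}]\rangle\subset A_{L}$, where $I^{\ast}={\rm div}(I)^{-1}I$; this is a cyclic isotropic subgroup of order ${\rm div}(I)$, its orthogonal complement $H^{\perp}$ is the image of $I^{\perp}\cap L^{\vee}$ in $A_{L}$, and $H^{\perp}/H\simeq A_{I^{\perp}/I}$ by the canonical isometry recalled at the start of \S \ref{sec: example}. Because $E_{w}$ fixes $I^{\ast}$ and acts trivially on $I^{\perp}/I$, a direct computation with the formula for $E_{m\otimes l}$ shows that $T$ acts trivially on each graded piece of the filtration $0\subseteq H\subseteq H^{\perp}\subseteq A_{L}$; that is, $N:=T-{\id}$ satisfies $N(A_{L})\subseteq H^{\perp}$, $N(H^{\perp})\subseteq H$ and $N(H)=0$, whence $N^{3}=0$.

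The conclusion is then pure algebra. Since $T$ is an involution, $N^{2}=-2N$, so $0=N^{3}=-2N^{2}=4N$, i.e. $4N=0$ on $A_{L}$. Restricting to $A_{\langle -2d \rangle}\simeq \Z/2d$, on which $T=-{\id}$ and hence $N=-2\,{\id}$, we obtain $-8\equiv 0$ in $\Z/2d$, that is $2d\mid 8$. With $d\geq 2$ this forces $d\in\{2,4\}$, and the contrapositive is exactly the proposition.

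The decisive part will be the unipotency step: one must check carefully that $E_{w}$, whose image $T$ on $A_{L}$ may well be a \emph{nontrivial} involution, nevertheless acts trivially on the three graded pieces cut out by $[I^{\ast}]$. This is what confines the defect $N$ to $4$-torsion and, combined with $T|_{A_{\langle -2d \rangle}}=-{\id}$, produces the divisibility $2d\mid 8$; the reduction through Proposition \ref{prop: characterize 0dim irregular} and the identity $4N=0$ are formal. I would also emphasize that the argument is uniform in the sign $g|_{A_{M}}=\pm{\id}$, so it treats the case $\G={\Ost}$ and the strictly larger group at once, the latter being precisely where the O'Grady $10$ cusp at $d=4$ (with $A_{M}$ of odd order) lives.
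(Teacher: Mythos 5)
Your proof is correct, but it takes a genuinely different route from the paper's. The paper, after the same reduction via Proposition \ref{prop: characterize 0dim irregular} (2) and the same exclusion of $d=1$, proceeds by explicit computation: it notes $(-E_{m\otimes l})^{2}=E_{2m\otimes l}\in \Ost$, invokes Lemma \ref{lem: UIZ=LI} to get $2m\otimes l\in L(I)$ and hence a lift $\tilde{m}\in I^{\perp}\cap \frac{1}{2}L$, and then evaluates $-E_{m\otimes l}(v)$ on a generator $v$ of $\langle -2d\rangle^{\vee}$, tracking denominators ($(\tilde{m},v)\in\frac{1}{2}\Z$, $(m,m)\in\frac{1}{2}\Z$) to conclude $2v\in\frac{1}{4}L$, i.e.\ $2d\mid 8$. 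You instead work entirely on the discriminant group: the action $T$ of $E_{w}$ on $A_{L}$ is an involution because $-E_{w}\in\G$ acts diagonally by signs on $A_{M}\oplus A_{\langle -2d\rangle}$, and $N=T-\id$ is nilpotent with $N^{3}=0$ via the filtration $H\subseteq H^{\perp}\subseteq A_{L}$ cut out by $[I^{\ast}]$; then $N^{2}=-2N$ forces $4N=0$, and restricting to $A_{\langle -2d\rangle}$, where $N=-2\,\id$, gives $2d\mid 8$. Your unipotency claims do check out: $E_{w}$ fixes $l$ so $N(H)=0$; it acts trivially on $I^{\perp}_{\Q}/I_{\Q}$ so $N(H^{\perp})\subseteq H$; and $(E_{w}(v)-v,l)=0$ for all $v\in L^{\vee}$ so $N(A_{L})\subseteq H^{\perp}$ (all using that $E_{w}=-g$ preserves $L$ and $L^{\vee}$). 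What each approach buys: the paper's computation is elementary, self-contained modulo Lemma \ref{lem: UIZ=LI}, and its template is reused almost verbatim for the special cubic fourfolds in \S 4.4; yours avoids Lemma \ref{lem: UIZ=LI} and all choices of lifts, and isolates a clean general principle --- any element of $\UIQ$ whose negative lies in $\G$ induces on $A_{L}$ an involution $T$ with $4(T-\id)=0$ --- which uniformly recovers not only this proposition but also the cubic fourfold case (there $T=-\id$ on $A_{K}$ gives $8\,A_{K}=0$, contradicting $|A_{K}|>1$ odd).
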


\begin{proof}
Assume that $I={\Z}l$ is an irregular cusp for ${\G}$ 
and $-E_{m\otimes l}\in {\G}$ for $m\in I^{\perp}_{{\Q}}/I_{{\Q}}$. 
The case $d=1$ is excluded by $-{\id}\not\in {\G}$. 
We shall show that $d\mid 4$. 
Since 
$E_{2m\otimes l}=(-E_{m\otimes l})\circ (-E_{m\otimes l}) \in {\Ost}$, 
we see that $2m\otimes l \in L(I)$ by Lemma \ref{lem: UIZ=LI}. 
Hence we can take a lift $\tilde{m}$ of $m$ from $I^{\perp}\cap \frac{1}{2}L$. 
Let $v$ be a generator of $\langle -2d \rangle^{\vee}$. 
The vector  
\begin{equation*}
-E_{m\otimes l}(v) = -v + (\tilde{m}, v)l - (l, v)\tilde{m} + \frac{1}{2}(m, m)(l, v)l  
\end{equation*}
must be contained in $v+L$, 
and hence 
\begin{equation}\label{eqn: OG10 calculation}
2v \in (\tilde{m}, v)l - (l, v)\tilde{m} + \frac{1}{2}(m, m)(l, v)l + L.
\end{equation}
Since $(\tilde{m}, v)\in \frac{1}{2}{\Z}$, 
$(l, v)\in {\Z}$ and $(m, m)\in \frac{1}{2}{\Z}$, 
we find that 
$2v\in \frac{1}{4}L$. 
Hence $2d\mid 8$. 
\end{proof}

The case $d=2$ does not occur when $|A_{M}|$ is square-free, 
because then $A_{L}$ is anisotropic and hence ${\rm div}(I)=1$. 
In Proposition \ref{prop: irregular example -8} 
we show that ${\G}$ indeed has an irregular cusp when $d=4$ and $M$ contains $U$.

\subsection{Generalized Kummer}\label{ssec: gene Kummer}

In this subsection we let $L=M\oplus \langle -2d \rangle$ be as in \S \ref{ssec: OG10} 
and consider the group 
\begin{equation*}
{\G} = \{ \: \gamma \in {\OL} \: | \: 
\gamma|_{A_{M}}=\det (\gamma) {\id}, \: \gamma|_{A_{\langle -2d \rangle}}={\id} \; \}. 
\end{equation*}
This is an index $\leq 2$ subgroup of the group considered in \S \ref{ssec: OG10}. 
When $M=2U \oplus \langle -2t \rangle$ with $t\geq 3$, 
${\G}$ is the modular group for the moduli space of 
polarized deformation generalized Kummer varieties of $A^{[t]}$-type 
with polarization of split type and degree $2d$ (\cite{Da}). 

\begin{proposition}
The group ${\G}$ has no irregular cusp when $d\nmid 4$. 
Moreover, when $b$ is even, ${\G}$ has no irregular cusp unless when 
$A_{L}$ is isomorphic to 
${\Z}/8 \oplus ({\Z}/2)^{\oplus a}$ or $({\Z}/4)^{\oplus 2} \oplus ({\Z}/2)^{\oplus a}$ 
as abelian groups. 
\end{proposition}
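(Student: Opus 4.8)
The plan is to reduce both assertions to results already proved for the O'Grady~10 group of \S\ref{ssec: OG10} and for the stable orthogonal group of \S\ref{ssec: stable orthogonal}. Throughout I invoke Proposition~\ref{prop: characterize 0dim irregular}(2): assuming $I={\Z}l$ is an irregular cusp for ${\G}$, we have $-{\id}\notin{\G}$ and $-E_{m\otimes l}\in{\GIZ}$ for some $m\in I^{\perp}_{{\Q}}/I_{{\Q}}$. For the first assertion, note that ${\G}$ is a finite-index subgroup of the O'Grady~10 group $\Gamma'=\{\gamma\in{\OL}\mid \gamma|_{A_{M}}=\pm{\id},\ \gamma|_{A_{\langle -2d\rangle}}={\id}\}$, and that $-{\id}\in\Gamma'$ exactly when $d=1$. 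When $d\nmid 4$ we have $d\neq 1$ and $d\neq 2,4$, so Proposition~\ref{prop: OG10} gives that $\Gamma'$ has no irregular cusp while $-{\id}\notin\Gamma'$; Corollary~\ref{cor: subgroup overgroup} then shows that the finite-index subgroup ${\G}$ has none either. Equivalently, one may rerun the computation of Proposition~\ref{prop: OG10}: since $-E_{m\otimes l}\in\Gamma'$, its square $E_{2m\otimes l}$ lies in ${\Ost}$, Lemma~\ref{lem: UIZ=LI} supplies a lift $\tilde m\in I^{\perp}\cap\frac12 L$, and evaluating $-E_{m\otimes l}(v)$ on a generator $v$ of $\langle -2d\rangle^{\vee}$ forces $2v\in\frac14 L$, whence $2d\mid 8$.

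For the second assertion the key point is that the parity hypothesis collapses ${\G}$ onto the stable orthogonal group at the relevant element. When $b$ is even we have $\det(-E_{m\otimes l})=(-1)^{b}=1$, so the defining conditions of ${\G}$ force $-E_{m\otimes l}$ to act by $\det(-E_{m\otimes l}){\id}={\id}$ on $A_{M}$ and by ${\id}$ on $A_{\langle -2d\rangle}$; hence $-E_{m\otimes l}$ acts trivially on all of $A_{L}=A_{M}\oplus A_{\langle -2d\rangle}$, i.e.\ $-E_{m\otimes l}\in{\Ost}$. Next I would check that $-{\id}\notin{\Ost}$: otherwise $A_{L}$ would be $2$-elementary, and then, using $b$ even so that $\det(-{\id})=1$, the element $-{\id}$ would satisfy the defining conditions of ${\G}$ (it acts by ${\id}=\det(-{\id}){\id}$ on $A_{M}$ and by ${\id}$ on $A_{\langle -2d\rangle}$), contradicting $-{\id}\notin{\G}$.

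With $-{\id}\notin{\Ost}$ and $-E_{m\otimes l}\in{\Ost}$ in hand, Proposition~\ref{prop: characterize 0dim irregular}(2) shows that the very cusp $I$ is irregular for ${\Ost}$. Therefore Proposition~\ref{prop: AL irregular stable} applies directly and yields $A_{L}\simeq{\Z}/8\oplus({\Z}/2)^{\oplus a}$ or $A_{L}\simeq({\Z}/4)^{\oplus 2}\oplus({\Z}/2)^{\oplus a}$, as required. I expect the only delicate step to be the verification that $2$-elementarity of $A_{L}$ together with $b$ even really places $-{\id}$ inside ${\G}$ (so that $-{\id}\notin{\Ost}$ can be deduced): this is exactly where the definition of ${\G}$ through $\det$ interacts with the parity of $b$, and it is the hinge that lets us import the ${\Ost}$-classification of Proposition~\ref{prop: AL irregular stable} wholesale, instead of re-running the discriminant-form analysis of Lemma~\ref{lem: constraint irregular stable} (including the separate exclusion of ${\rm div}(I)=1$) for ${\G}$.
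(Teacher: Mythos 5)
Your proof is correct. For the first assertion ($d\nmid 4$) you argue exactly as the paper does, via Proposition \ref{prop: OG10} and Corollary \ref{cor: subgroup overgroup}, merely making explicit the point that $d\neq 1$ guarantees $-{\id}\notin\Gamma'$ for the overgroup. For the second assertion your route genuinely differs from the paper's. The paper argues at the level of groups: since the condition $\gamma|_{A_{M}}=\det(\gamma){\id}$ collapses to $\gamma|_{A_{M}}={\id}$ on ${\rm SO}^{+}(L)$, one has ${\G}\cap{\rm SO}^{+}(L)={\Ost}\cap{\rm SO}^{+}(L)$, and two applications of Corollary \ref{cor: SO restrict} then give that, for $b$ even, ${\G}$ has an irregular cusp if and only if ${\Ost}$ does; Proposition \ref{prop: AL irregular stable} finishes. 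You instead transfer the witness element: $\det(-E_{m\otimes l})=(-1)^{b+2}=1$ forces $-E_{m\otimes l}\in{\Ost}$, and you rule out $-{\id}\in{\Ost}$ by the $2$-elementarity/parity contradiction, so that Proposition \ref{prop: characterize 0dim irregular}(2) makes the very same cusp irregular for ${\Ost}$. Both arguments turn on the same determinant-parity computation (which is also the entire content of the proof of Corollary \ref{cor: SO restrict}). Inlining it costs you the extra verification $-{\id}\notin{\Ost}$, which you supply correctly --- and which is genuinely needed, since ${\Ost}\not\subset{\G}$ for this group, so there is no shortcut through an inclusion of groups. The paper's packaging hides that step inside Corollary \ref{cor: SO restrict}, and moreover yields the two-sided equivalence between irregularity for ${\G}$ and for ${\Ost}$, which is what the paper exploits right afterwards when importing the existence examples of \S \ref{ssec: irregular example} for the exceptional cases; your one-directional argument suffices for the proposition as stated but would not by itself give that transfer of existence.
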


\begin{proof}
The assertion $d\nmid 4$ follows from Corollary \ref{cor: subgroup overgroup} and Proposition \ref{prop: OG10}. 
Since ${\G}\cap {\rm SO}^{+}(L) = {\Ost}\cap {\rm SO}^{+}(L)$, 
Corollary \ref{cor: SO restrict} shows that 
when $b$ is even, ${\G}$ has an irregular cusp if and only if ${\Ost}$ has an irregular cusp. 
%Suppose that ${\G}$ has an irregular cusp and let $-E_{w}\in {\G}$. 
%Then $-E_{w}$ is also contained in the group considered in \S \ref{ssec: OG10}, 
%so we must have $d\mid 4$ by the proof of Proposition \ref{prop: OG10}.  
%When $b$ is even, we have $\det(-E_{w})=1$, 
%so $-E_{w}$ is also contained in ${\Ost}$. 
%Since $-{\id}\not\in {\G}$, the lattice $L$ is not 2-elementary, which in turns implies that 
%${\Ost}$ has an irregular cusp. 
Then our assertion follows from Proposition \ref{prop: AL irregular stable}. 
\end{proof}

This shows that 
when $M=2U \oplus \langle -2t \rangle$, 
${\G}$ has no irregular cusp when $(t, d)\ne (4, 1), (2, 2), (1, 4)$. 
In \S \ref{ssec: irregular example} we show that 
${\G}$ indeed has an irregular cusp in these exceptional cases.

\subsection{Special cubic fourfolds}

In this subsection we let $L$ be an even lattice of the form 
$L=M\oplus K$ with $|A_{K}|>1$ odd. 
($K$ may be either negative-definite or hyperbolic or of signature $(2, \ast)$.) 
We consider the group 
\begin{equation*}
{\G} = \{ \: \gamma \in {\OL} \: | \: 
\gamma|_{A_{M}}= \pm{\id}, \: \gamma|_{A_{K}}={\id} \; \}. 
\end{equation*}
When $M=\langle 2n \rangle \oplus U \oplus 2E_{8}$ and $K=A_{2}$, 
${\G}$ is the modular group for the moduli space of special cubic fourfolds of discriminant $6n$ (\cite{TVA}). 

\begin{proposition}
The group ${\G}$ has no irregular cusp. 
\end{proposition}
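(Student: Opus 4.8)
The plan is to reduce everything to $0$-dimensional cusps via Corollary \ref{cor: 1dim reduced to 0dim}, and then argue by contradiction using the characterization of irregularity in Proposition \ref{prop: characterize 0dim irregular} (2). Suppose $I = \Z l$ were an irregular $0$-dimensional cusp for $\G$. Then $-\id \notin \G$ and $-E_{m\otimes l} \in \GIZ$ for some $m \in I^{\perp}_{\Q}/I_{\Q}$. Since $-E_{m\otimes l} \in \OL$ preserves $L$ and $-\id$ preserves $L$, the Eichler transvection $E_{m\otimes l} = (-\id)\circ(-E_{m\otimes l})$ also preserves $L$; hence $E_{m\otimes l} \in \OL$ and induces an automorphism $\bar{E}$ of the discriminant group $A_{L} = A_{M}\oplus A_{K}$.

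The key observation I would extract is how $\bar{E}$ acts on the odd summand $A_{K}$. By the definition of $\G$, the element $-E_{m\otimes l}$ acts as $\id$ on $A_{K}$; since $-\id$ preserves the subgroup $A_{K}$ and acts there as $-\id$, the factor $E_{m\otimes l}$ preserves $A_{K}$ as well, and $\bar{E}|_{A_{K}} = -\id$. On the other hand, $E_{m\otimes l}$ is unipotent on $L$: it fixes $l$, acts trivially on $I^{\perp}/I$, and lowers each step of the filtration $I \subset I^{\perp} \subset L$ by one stage, so that $(E_{m\otimes l} - \id)^{3} = 0$ on $L$. Passing to $L^{\vee}/L$, the induced map $\bar{E}$ is again unipotent, $(\bar{E} - \id)^{3} = 0$ on $A_{L}$, and in particular on $A_{K}$.

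Combining the two facts yields the contradiction: on $A_{K}$ the relation $(\bar{E} - \id)^{3}=0$ reads $(-2\,\id)^{3} = -8\,\id = 0$, so $8\,A_{K} = 0$. Since $|A_{K}|$ is odd, multiplication by $8$ is invertible on $A_{K}$, forcing $A_{K} = 0$, which contradicts $|A_{K}| > 1$. Hence $\G$ has no irregular $0$-dimensional cusp, and by Corollary \ref{cor: 1dim reduced to 0dim} no irregular cusp at all.

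The only step requiring genuine care — and the conceptual heart of the argument — is the incompatibility between unipotence of Eichler transvections on the discriminant group and the necessarily non-unipotent action of $-\id$ on a group of odd order greater than $1$. The verification that $E_{m\otimes l}$ descends to an automorphism of the summand $A_{K}$ is then a formal consequence of $-\id$ preserving every summand, and the nilpotency degree computation is routine. I do not expect to need Lemma \ref{lem: UIZ=LI} here, since the odd-order obstruction already rules out irregularity outright.
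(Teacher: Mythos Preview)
Your argument is correct and reaches the same contradiction $8A_{K}=0$ as the paper, but by a genuinely different and more conceptual route. The paper follows the template of Proposition~\ref{prop: OG10}: it first uses Lemma~\ref{lem: UIZ=LI} to show that $m$ lifts to $I^{\perp}\cap\frac{1}{2}L$, then plugs a vector $v\in K^{\vee}\setminus K$ into the explicit formula for $-E_{m\otimes l}(v)$ and bounds the denominators termwise to obtain $8v\in L$. You instead extract the abstract fact that an Eichler transvection satisfies $(E_{m\otimes l}-\id)^{3}=0$ on $L_{\Q}$, pass this nilpotency to $A_{L}$, and combine it with the observation that $\bar{E}|_{A_{K}}=-\id$ (forced by $(-E_{m\otimes l})|_{A_{K}}=\id$) to get $(-2)^{3}A_{K}=0$ directly. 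Your approach is cleaner---it bypasses Lemma~\ref{lem: UIZ=LI} entirely and explains structurally why the bound $8$ arises---while the paper's explicit computation has the virtue of being parallel to the neighbouring O'Grady case, where finer information ($d\mid 4$) is needed and the coordinate calculation is harder to avoid.
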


\begin{proof}
Suppose to the contrary that 
$I={\Z}l$ is an irregular cusp and $-E_{m\otimes l}\in {\G}$. 
As in the proof of Proposition \ref{prop: OG10}, 
we can take a lift of $m$ from $I^{\perp}\cap \frac{1}{2}L$. 
We take a vector $v\in K^{\vee}-K$. 
Then $-E_{m\otimes l}(v)$ must be contained in $v+L$. 
The same calculation as \eqref{eqn: OG10 calculation} tells us that 
$8v\in L$. 
Therefore $[v]\in A_{K}\subset A_{L}$ satisfies $8[v]=0$, 
but this contradicts the assumption that $|A_{K}|$ is odd. 
\end{proof}

\subsection{Examples of irregular cusps}\label{ssec: irregular example}

In this subsection we present two series of examples of irregular cusps, infinitely many in every dimension. 
%In view of the result of \cite{Ma}, we have many examples of groups ${\G}$ with irregular cusp 
%such that ${\FG}$ is of general type. 
We will denote by $e, f$ the standard basis of $U$. 

As the first series of examples, we consider even lattices of the form 
$L=U\oplus \langle -8 \rangle \oplus M$ with $M$ hyperbolic. 
We define the group ${\G}$ by 
\begin{equation*}
{\G} = \{ \: \gamma \in {\OL} \: | \: 
\gamma|_{A_{M}}=\pm{\id}, \: \gamma|_{A_{\langle -8 \rangle}}={\id} \; \}. 
\end{equation*}
This is the group considered in \S \ref{ssec: OG10} with $d=4$. 
The group ${\G}$ contains ${\Ost}$ with index $\leq 2$, 
and we have ${\G}={\Ost}$ if and only if $A_{M}$ is 2-elementary. 

\begin{proposition}\label{prop: irregular example -8}
The group ${\G}$ has an irregular cusp. 
\end{proposition}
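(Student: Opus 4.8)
The plan is to exhibit an explicit irregular $0$-dimensional cusp and verify irregularity through criterion (2) of Proposition \ref{prop: characterize 0dim irregular}: I must produce a rank $1$ primitive isotropic $I = \Z l$ together with a vector $w \in L(I)_{\Q}$ such that $-E_{w} \in \GIZ$, and separately check that $-\id \notin \G$. Writing $e, f$ for the standard basis of $U$ and $r$ for a generator of $\langle -8 \rangle$ (so $(r,r) = -8$), I would take $l = 2e + 2f + r$. A direct computation gives $(l,l) = 0$, and since $(l, e) = (l,f) = 2$ and $(l, r) = -8$ while $l \perp M$, the vector $l$ is primitive with ${\rm div}(I) = 2$; this matches the divisibility forced by Lemma \ref{lem: constraint irregular stable}, and one checks $[l/2] = [r/2]$ is the order $2$ element of $A_{\langle -8 \rangle}$.

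Next I would choose the half-integral lift $\tilde m = \tfrac{1}{2}(e - f)$, which satisfies $(\tilde m, l) = 0$, so that $m \otimes l \in L(I)_{\Q}$ with $m$ the class of $\tilde m$ in $(I^{\perp}/I)_{\Q}$, and compute the Eichler transvection $E = E_{m \otimes l}$ on generators using $(m,m) = -\tfrac12$. The outcome is $E(e) = 4e + f + r$, $E(f) = e$, $E(r) = -8e - r$, and $E(\mu) = \mu$ for $\mu \in M$; in particular $E$ preserves $L$ (the matrix on $\langle e, f, r \rangle$ has determinant $1$) and fixes $M$ pointwise, hence also fixes $M^{\vee}$. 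Consequently $-E$ sends $\mu \mapsto -\mu$ for $\mu \in M^{\vee}$, so $(-E)|_{A_{M}} = -\id$, which is permitted by the defining condition $\gamma|_{A_{M}} = \pm\id$. Note that the construction uses only the $U \oplus \langle -8 \rangle$ summand together with the sign action on $M$, so it is insensitive to the finer structure of $M$.

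The point that requires care — and the one I view as the crux — is the discriminant action on $A_{\langle -8 \rangle} = \langle [r/8] \rangle \simeq \Z/8$: although $-\id$ alone acts by $-1$ there (this is precisely why $-\id \notin \G$ and why an irregular cusp can occur at all), the Eichler transvection must compensate. I would verify this by computing $-E(r/8) = (8e + r)/8 = e + r/8 \equiv r/8 \pmod{L}$, so that $(-E)|_{A_{\langle -8 \rangle}} = \id$; thus $-E \in \G$, and since both $E$ and $-\id$ stabilize $I = \Z l$, in fact $-E \in \GIZ$. Finally $-\id \notin \G$ because it acts by $-1$ on $A_{\langle -8 \rangle}$, which is nontrivial on $\Z/8$. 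By Proposition \ref{prop: characterize 0dim irregular}(2) this shows that $I$ is an irregular cusp for $\G$; equivalently $E = -(-E)$ lies in $\UIZZ$ but not in $\UIZ$.
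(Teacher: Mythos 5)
Your proposal is correct and follows essentially the same route as the paper's own proof: the paper takes exactly the same isotropic vector $l = 2e+2f+v$ and the same vector $m = e/2 - f/2$, performs the identical computations $E_{m\otimes l}(e)=4e+f+v$, $E_{m\otimes l}(f)=e$, $E_{m\otimes l}(v)=-v-8e$, and verifies the same three conditions ($E_{m\otimes l}(L)\subset L$, action $\pm{\rm id}$ on $A_{M}$, and $E_{m\otimes l}(v/8)\in -v/8+L$) together with $-{\rm id}\notin \Gamma$. Your extra remarks (primitivity of $l$, ${\rm div}(I)=2$, the determinant computation) are harmless elaborations of details the paper leaves implicit.
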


\begin{proof}
First note that $-{\id}\not\in {\G}$ by the condition $\gamma|_{A_{\langle -8 \rangle}}={\id}$. 
Let $v$ be a generator of $\langle -8 \rangle$. 
We take the vectors 
\begin{equation*}
l = 2e + 2f + v, \quad 
m = e/2 - f/2, 
\end{equation*}
and show that 
$-E_{m\otimes l}\in {\G}$. 
This amounts to checking the following: 
\begin{equation*}
E_{m\otimes l}(L) \subset L, \quad 
E_{m\otimes l}|_{A_{M}} = \pm{\id}, \quad 
E_{m\otimes l}(v/8) \in -v/8+L. 
\end{equation*}
Since $M\perp \langle l, m \rangle$, 
$E_{m\otimes l}$ acts trivially on $M$. 
By direct calculation, we see that 
\begin{equation*}
E_{m\otimes l}(e)=4e+f+v, \quad 
E_{m\otimes l}(f)=e,  \quad 
E_{m\otimes l}(v)=-v-8e. 
\end{equation*} 
This proves our assertion. 
\end{proof}

As the second series of examples, 
we consider even lattices of the form 
$L=U\oplus \langle -4 \rangle^{\oplus 2} \oplus M$ 
with $M$ hyperbolic, 
and the group ${\G}$ defined by 
\begin{equation*}
{\G} = \{ \: \gamma \in {\OL} \: | \: 
\gamma|_{A_{M}}=\pm{\id}, \: \gamma|_{A_{\langle -4 \rangle^{\oplus 2}}}={\id} \; \}. 
\end{equation*}
The group ${\G}$ contains ${\Ost}$ with index $\leq 2$,  
and ${\G}={\Ost}$ if and only if $A_{M}$ is 2-elementary. 

\begin{proposition}\label{prop: irregular example -4+-4}
The group ${\G}$ has an irregular cusp. 
\end{proposition}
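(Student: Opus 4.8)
The plan is to mimic the structure of the proof of Proposition \ref{prop: irregular example -8}, which is the directly analogous statement for $L = U \oplus \langle -8 \rangle \oplus M$. The strategy throughout \S \ref{ssec: irregular example} is to exhibit an explicit Eichler transvection $E_{m \otimes l}$ for a cleverly chosen isotropic vector $l$ and element $m \in I^\perp_{\Q}/I_{\Q}$, and then verify that $-E_{m \otimes l}$ lies in $\G$. By the characterization in Proposition \ref{prop: characterize 0dim irregular}(2), producing such a transvection (together with the already-noted fact that $-\id \notin \G$) immediately proves that $I = \Z l$ is an irregular cusp. So first I would record that $-\id \notin \G$, which follows from the defining condition $\gamma|_{A_{\langle -4 \rangle^{\oplus 2}}} = \id$ together with $-\id$ acting as $-\id$ on $A_{\langle -4 \rangle^{\oplus 2}} \simeq (\Z/4)^{\oplus 2}$, which is nontrivial since $-\id \neq \id$ on $\Z/4$.

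Next I would make an explicit choice of $l$ and $m$. Write $v_1, v_2$ for generators of the two $\langle -4 \rangle$ summands (so $(v_i, v_i) = -4$), and let $e, f$ be the standard basis of $U$. Guided by the $\langle -8 \rangle$ case, where $l = 2e + 2f + v$ was isotropic because $(2e+2f, 2e+2f) = 8$ cancelled $(v,v) = -8$, I would look for $l$ supported on $U \oplus \langle -4 \rangle^{\oplus 2}$ that is isotropic: taking $l = e + f + v_1 + v_2$ gives $(l,l) = 2 - 4 - 4 = -6 \neq 0$, so instead I would try $l = 2e + f + v_1 + v_2$, giving $(l,l) = 2\cdot 2 - 4 - 4 = -4$, still not isotropic. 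The correct scaling should balance $2(e,f)$-terms against $-4 - 4 = -8$, so $l = 2e + 2f + v_1 + v_2$ with $(l,l) = 8 - 8 = 0$ is isotropic; this is the natural candidate. For $m$ I would take an element of $I^\perp_{\Q}/I_{\Q}$ built from the $v_i$ so that $E_{m \otimes l}$ sends each $v_i/4 \in \langle -4 \rangle^{\vee}$ back into $-v_i/4 + L$; the analogue of $m = e/2 - f/2$ in the previous proof suggests something like $m = (v_1 - v_2)/4$ or a combination of $e, f$ with the $v_i$, and the precise coefficients would be pinned down by the verification.

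With $l$ and $m$ chosen, the verification reduces, exactly as in Proposition \ref{prop: irregular example -8}, to three checks: that $E_{m \otimes l}(L) \subset L$ (integrality on a basis), that $E_{m \otimes l}$ acts as $\pm \id$ on $A_M$ (automatic since $M \perp \langle l, m \rangle$, so $E_{m \otimes l}$ fixes $M$ pointwise and hence acts trivially on $A_M$, giving the $+\id$ sign), and that $-E_{m \otimes l}$ fixes $A_{\langle -4 \rangle^{\oplus 2}}$, i.e.\ $E_{m \otimes l}(v_i/4) \in -v_i/4 + L$ for $i = 1, 2$. I would carry out these by evaluating $E_{m \otimes l}$ on the basis $e, f, v_1, v_2$ via the Eichler transvection formula and displaying the images, as was done for the $\langle -8 \rangle$ example.

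The main obstacle is purely computational: finding the correct $m$ so that all three conditions hold simultaneously. The isotropy of $l$ is easy, and the action on $A_M$ is automatic, but the integrality condition $E_{m \otimes l}(L) \subset L$ and the discriminant condition on $A_{\langle -4 \rangle^{\oplus 2}}$ constrain $m$ tightly, and the denominators ($1/4$ rather than $1/8$) behave differently from the previous case. The genuine content is therefore to exhibit one valid $(l, m)$ pair and verify the images on a basis; once that display is produced, the conclusion that $I = \Z l$ is irregular is immediate from Proposition \ref{prop: characterize 0dim irregular}(2).
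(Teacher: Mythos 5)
Your framework is exactly the paper's: by Proposition \ref{prop: characterize 0dim irregular}(2) it suffices to exhibit $-E_{m\otimes l}\in{\G}$ with $-{\id}\notin{\G}$, and your isotropic vector $l=2e+2f+v_{1}+v_{2}$ is precisely the one the paper uses. The problem is that the entire mathematical content of this proposition is the explicit pair $(l,m)$, and you never produce a valid $m$: you leave it ``to be pinned down by the verification,'' and your one concrete candidate $m=(v_{1}-v_{2})/4$ fails. Indeed, with $\tilde m=(v_{1}-v_{2})/4$ one has $(\tilde m,l)=0$, $(m,m)=-1/2$, and
\begin{equation*}
E_{m\otimes l}(v_{1}) \;=\; v_{1}+l-(v_{1}-v_{2})-l \;=\; v_{2},
\end{equation*}
so $-E_{m\otimes l}$ sends $[v_{1}/4]\mapsto[-v_{2}/4]\neq[v_{1}/4]$ in $A_{\langle -4\rangle^{\oplus 2}}$, violating the defining condition $\gamma|_{A_{\langle -4\rangle^{\oplus 2}}}={\id}$. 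Worse, no $m$ supported only on the $\langle -4\rangle^{\oplus 2}$ summand can ever work: membership in $I^{\perp}_{{\Q}}/I_{{\Q}}$ forces $m=a(v_{1}-v_{2})$, integrality of $E_{m\otimes l}$ on $L$ forces $n:=4a\in{\Z}$, and the condition $E_{m\otimes l}(v_{1})\equiv -v_{1} \bmod 4L$ then forces $n^{2}\equiv 2 \bmod 4$, which is impossible. So the family of candidates you lead with is empty, and the vague alternative (``a combination of $e,f$ with the $v_{i}$'') is never realized.

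That alternative is in fact the point: the paper takes $m=e+v_{1}/2$, mixing the hyperbolic plane into $m$, in parallel with $m=e/2-f/2$ in Proposition \ref{prop: irregular example -8}. With this choice $(\tilde m,l)=0$, $(m,m)=-1$, and
\begin{align*}
E_{m\otimes l}(e) &= 5e+2f+2v_{1}+v_{2}, & E_{m\otimes l}(f) &= 2e+f+v_{1}, \\
E_{m\otimes l}(v_{1}) &= -4e-v_{1}, & E_{m\otimes l}(v_{2}) &= -8e-4f-4v_{1}-v_{2},
\end{align*}
while $E_{m\otimes l}|_{M}={\id}$ since $M\perp\langle l,\tilde m\rangle$. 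Hence $E_{m\otimes l}(L)\subset L$, $E_{m\otimes l}(v_{i}/4)\equiv -v_{i}/4 \bmod L$ for $i=1,2$, and $-E_{m\otimes l}$ acts as $-{\id}$ on $A_{M}$ and as ${\id}$ on $A_{\langle-4\rangle^{\oplus 2}}$; therefore $-E_{m\otimes l}\in{\G}$ and $I={\Z}l$ is irregular. Your proposal correctly sets up the target, the vector $l$, and the three checks, but stops short of the one step that constitutes the proof, and the explicit candidate it does offer would fail; that is a genuine gap, though one fillable within your framework.
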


\begin{proof}
This is similar to the first example. 
We let $v_{1}, v_{2}$ be the standard basis of $\langle -4 \rangle^{\oplus 2}$ 
and show that $-E_{m\otimes l}\in {\G}$ 
for the vectors 
\begin{equation*}
l = 2e + 2f + v_{1} + v_{2}, \quad 
m = e + v_{1}/2. 
\end{equation*}
The detail is left to the reader. 
\end{proof}

%%%%%
%%1-dimensional cusp
%%%%%

\section{$1$-dimensional cusps}\label{sec: 1dim cusp}

In this section we recall, following \cite{Sc}, \cite{Ko}, \cite{GHS07}, \cite{Lo},  
the structure of the stabilizer of a $1$-dimensional cusp of ${\D}={\DL}$ 
with its action on the Siegel domain model,   
and the canonical partial toroidal compactification over the cusp. 
This is a long preliminary for the next \S \ref{sec: irr 1dim cusp}. 
Although this section is no more than expository, 
we need to keep the rather self-contained style of \S \ref{sec: 0dim cusp}, 
for the same reasons as in \S \ref{sec: 0dim cusp} and for consistency. 

Throughout this section we fix a rank $2$ primitive isotropic sublattice $J$ of $L$. 
The choice of the component ${\D}$ determines 
a connected component of ${\proj}J_{{\C}} - {\proj}J_{{\R}}$, 
which is the cusp corresponding to $J$. 
This in turn determines an orientation of $J$. 
We write 
\begin{equation*}
L(J) = J^{\perp}\cap L/J, 
\end{equation*}
which is a negative-definite lattice of rank $b-2$. 
%Here $\wedge^{2}J\simeq {\Z}$ by the orientation of $J$. 
We will call an embedding $2U_{{\Q}}\hookrightarrow L_{{\Q}}$ a \textit{splitting for} $J_{{\Q}}$ 
if it sends the standard $2$-dimensional isotropic subspace of $2U_{{\Q}}$ to $J_{{\Q}}$. 
This defines a lift $L(J)_{{\Q}}\hookrightarrow J^{\perp}_{{\Q}}$ of $L(J)_{{\Q}}$ as $2U_{{\Q}}^{\perp}$.

\subsection{Siegel domain model}\label{ssec: Siegel domain}

We consider restriction of the two-step projection 
%\begin{equation*}
${\proj}L_{{\C}} \dashrightarrow {\proj}(L/J)_{{\C}} \dashrightarrow {\proj}(L/J^{\perp})_{{\C}}$ 
%\end{equation*}
to ${\D}\subset Q \subset {\proj}L_{{\C}}$. 
Since $Q$ contains the line ${\proj}J_{{\C}}$, 
the center of the first projection, 
a plane containing ${\proj}J_{{\C}}$ either intersects with $Q$ at two lines (one ${\proj}J_{{\C}}$) 
or is contained in $Q$. 
The latter occurs when the plane is contained in ${\proj}J^{\perp}_{{\C}}$. 
This shows that the first projection 
\begin{equation*}
\pi_{1} : Q - Q\cap {\proj}J^{\perp}_{{\C}} \to {\proj}(L/J)_{{\C}} - {\proj}L(J)_{{\C}} 
\end{equation*}
is an affine line bundle. 
We identify $(L/J^{\perp})_{{\C}}=J^{\vee}_{{\C}}$ by the pairing. 
The second projection 
\begin{equation*}
\pi_{2} : {\proj}(L/J)_{{\C}} - {\proj}L(J)_{{\C}} \to 
{\proj}(L/J^{\perp})_{{\C}} = {\proj}J^{\vee}_{{\C}} 
\end{equation*}
is an affine space bundle isomorphic to 
$L(J)_{{\C}} \otimes \mathcal{O}_{{\proj}J^{\vee}_{{\C}}}(1)$. 
Here, by abuse of notation, 
$\mathcal{O}_{{\proj}J^{\vee}_{{\C}}}(1)$ stands for the line bundle corresponding to this sheaf 
(the dual of the tautological line bundle). 
A choice of a splitting for $J_{{\C}}$ determines a section of $\pi_{2}$. 

The orientation of $J$ determines a connected component ${\HJ}$ of 
${\proj}J^{\vee}_{{\C}} - {\proj}J^{\vee}_{{\R}}$. 
We write 
${\VJ} = \pi_{2}^{-1}({\HJ})$ and  
${\D}(J) = \pi_{1}^{-1}({\VJ})$. 
By definition, ${\D}(J)$ consists of ${\C}\omega\in Q$ such that 
the map $(\cdot , \omega)\colon J_{{\R}}\to {\C}$ 
is an orientation-preserving ${\R}$-isomorphism. 
We thus have the enlarged two-step fibration for ${\D}$: 
\begin{equation*}
{\D} \subset {\D}(J) \stackrel{\pi_{1}}{\to} {\VJ} \stackrel{\pi_{2}}{\to} {\HJ}. 
\end{equation*}
This is the Siegel domain realization of ${\D}$ with respect to $J$.  
%(cf.~\cite{Ko}, \cite{GHS07}, \cite{Lo}). 
Here 
${\D}(J) \to {\VJ}$ is an affine line bundle, 
inside which ${\D}\to {\VJ}$ is an upper half space bundle. 
Over ${\HJ}\subset {\proj}J_{{\C}}^{\vee}$ we have the Hodge bundles 
\begin{equation*}
(F^{\vee})^{1,0} = \mathcal{O}_{{\HJ}}(-1) \subset \underline{J_{{\C}}^{\vee}}, \quad  
F^{1,0} = ((F^{\vee})^{1,0})^{\perp} \subset \underline{J_{{\C}}},  
\end{equation*}
where we write 
$\underline{J_{{\C}}^{\vee}} = J_{{\C}}^{\vee} \otimes \mathcal{O}_{{\HJ}}$ and 
$\underline{J_{{\C}}} = J_{{\C}} \otimes \mathcal{O}_{{\HJ}}$.  
Then  
$\mathcal{O}_{{\HJ}}(1)\simeq \underline{J_{{\C}}}/F^{1,0}$ 
canonically. 
This shows that ${\VJ}\to {\HJ}$ is an affine space bundle isomorphic to 
$L(J)_{{\C}} \otimes (\underline{J_{{\C}}}/F^{1,0})$. 

The relationship with the tube domain model is as follows. 
We choose a rank $1$ primitive sublattice $I$ of $J$. 
This corresponds to a $0$-dimensional cusp in the closure of the $1$-dimensional cusp for $J$. 
The filtration $I\subset J \subset J^{\perp} \subset L$ determines the projections 
${\proj}(L/I)_{{\C}}\dashrightarrow {\proj}(L/J)_{{\C}}\dashrightarrow {\proj}(L/J^{\perp})_{{\C}}$. 
Then the composition of this with the tube domain realization 
${\D}\subset {\DI} \hookrightarrow {\proj}(L/I)_{{\C}}$ 
is the Siegel domain realization above.

\subsection{Stabilizer over ${\Q}$}\label{ssec: stab Q 1dim cusp}

Let ${\GJQ}$ be the subgroup of 
the stabilizer of $J_{{\Q}}$ in ${\rm O}^{+}(L_{{\Q}})$ 
that acts on $J_{{\Q}}$ with determinant $1$. 
The determinant $1$ condition is not restrictive when restricting to subgroups of ${\OL}$. 
We write 
\begin{equation*}
{\WJQ} = {\ker}({\GJQ} \to {\rm O}(L(J)_{{\Q}}) \times {\rm SL}(J_{{\Q}})), 
\end{equation*}
\begin{equation*}
{\UJQ} = {\ker}({\GJQ} \to {\rm GL}(J_{{\Q}}^{\perp})), 
\end{equation*}
\begin{equation*}
{\VJQ} = {\WJQ}/{\UJQ}. 
\end{equation*}
By definition we have the canonical exact sequences 
\begin{equation}\label{eqn: WJQ sequence}
1 \to {\WJQ} \to {\GJQ} \to {\rm O}(L(J)_{{\Q}}) \times {\rm SL}(J_{{\Q}}) \to 1, 
\end{equation}
\begin{equation*}
0 \to {\UJQ} \to {\WJQ} \to {\VJQ} \to 0. 
\end{equation*}
The group ${\WJQ}$ is the unipotent part of ${\GJQ}$. 
The first sequence splits \textit{non-canonically} 
if we choose a splitting for $J_{{\Q}}$, 
while the second never splits, being a Heisenberg group. 

We have a $\wedge^{2}J$-valued symplectic form on $L(J)\otimes J$ 
induced from the quadratic form on $L(J)$ 
and the canonical symplectic form $J\times J \to \wedge^{2}J$. 
This gives a Heisenberg group structure on 
$\wedge^{2}J_{{\Q}}\times (L(J)_{{\Q}}\otimes J_{{\Q}})$. 
Explicitly, 
we identify $\wedge^{2}J_{{\Q}}\simeq {\Q}$ 
and take a bijection 
$L(J)_{{\Q}}\otimes J_{{\Q}} \simeq L(J)_{{\Q}} \times L(J)_{{\Q}}$ 
by choosing a positive basis of $J$, 
and define product on 
${\Q}\times L(J)_{{\Q}} \times L(J)_{{\Q}}$ by 
\begin{equation*}
(\alpha, v_{1}, v_{2})\cdot (\beta, w_{1}, w_{2}) = 
(\alpha+\beta+(v_{2}, w_{1}), v_{1}+w_{1}, v_{2}+w_{2}). 
\end{equation*}
The center is ${\Q}\times \{ 0 \} \times \{ 0 \}$. 

\begin{lemma}[cf.~\cite{Lo}]
${\WJQ}$ is isomorphic to the Heisenberg group for $L(J)_{{\Q}}\otimes J_{{\Q}}$ 
with center ${\UJQ}$, 
and we have the canonical isomorphisms 
\begin{equation*}
\wedge^{2}J_{{\Q}} \to {\UJQ}, \quad l\wedge l' \mapsto E_{l\otimes l'},  
\end{equation*}
\begin{equation*}
L(J)_{{\Q}}\otimes J_{{\Q}} \to {\VJQ}, \quad m\otimes l \mapsto E_{\tilde{m}\otimes l} \; {\rm mod} \; {\UJQ}. 
\end{equation*}
\end{lemma}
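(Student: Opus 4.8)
The plan is to identify $\WJQ$ explicitly as a group of Eichler transvections and transport the Heisenberg structure through this identification. First I would recall that the Eichler transvections $E_{w}$ for $w \in L(J)_{\Q} \otimes J_{\Q}$ sit inside $\GJQ$: writing an element of $L(J) \otimes J$ as $\tilde{m} \otimes l$ with $l \in J$ and $\tilde{m}$ a lift of $m \in L(J)_{\Q}$, the transvection $E_{\tilde{m} \otimes l}$ fixes $J$ pointwise (since $l \in J \subset J^{\perp}$ forces the $(l,v)$-term to vanish for $v$ already in $J$, and the induced action on $J^{\perp}/J = L(J)_{\Q}$ is trivial modulo $J$). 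Hence $E_{\tilde{m}\otimes l}$ lands in $\WJQ$, and I would check that it is independent of the chosen lift $\tilde{m}$ \emph{modulo} $\UJQ$, which is exactly what the second claimed isomorphism asserts. For the center $\UJQ$, I would verify directly from the defining formula that $E_{l \otimes l'}$ for $l, l' \in J$ acts trivially on $J^{\perp}$ (so lies in $\ker(\GJQ \to \GL(J^{\perp}))$, i.e.\ in $\UJQ$), giving the first map $\wedge^{2}J_{\Q} \to \UJQ$.

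Next I would establish that these maps are isomorphisms by a dimension/surjectivity count. The composition law $E_{w} \circ E_{w'} = E_{w+w'}$ from the tube-domain setup shows the association is a homomorphism on the additive group $L(J)_{\Q}\otimes J_{\Q}$, but the point is that $\WJQ$ is \emph{non-abelian}, so this cannot be the whole story: the Eichler transvections with $w \in L(J)_{\Q}\otimes J_{\Q}$ (lifted via a splitting) do not close up under composition on the nose — their commutators produce the central transvections $E_{l \wedge l'}$. I would compute the commutator $[E_{\tilde{m}\otimes l}, E_{\tilde{n}\otimes l''}]$ and show it equals $E_{w}$ with $w$ the central element determined by $(m,n)$ and the symplectic pairing $l \wedge l''$ on $J$; this is the key calculation and it is where the $\wedge^{2}J_{\Q}$-valued symplectic form $(m \otimes l, n \otimes l'') \mapsto (m,n)\, l\wedge l''$ enters. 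Comparing with the stated product law on ${\Q} \times L(J)_{\Q} \times L(J)_{\Q}$ — where the only nontrivial term is $(v_{2}, w_{1})$ — then matches the group structure precisely, once a positive basis of $J$ is fixed to split $L(J)_{\Q}\otimes J_{\Q} \simeq L(J)_{\Q}\times L(J)_{\Q}$.

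I would finish by checking exactness and completeness: the map $L(J)_{\Q}\otimes J_{\Q} \to \VJQ$ is well-defined and injective because its kernel is precisely the transvections landing in $\UJQ$, i.e.\ those with $w \in \wedge^{2}J_{\Q}$, and it is surjective because any element of $\WJQ$ acts trivially on both $L(J)_{\Q}$ and $J_{\Q}$ and so (by the standard description of the unipotent radical of a maximal parabolic, cf.\ the analogous $0$-dimensional computation in \S\ref{ssec: stab 0dim Q}) differs from a suitable product of Eichler transvections by a central element. Counting ranks, $\dim \UJQ = \dim \wedge^{2}J_{\Q} = 1$ and $\dim \VJQ = 2(b-2) = \dim(L(J)_{\Q}\otimes J_{\Q})$, which confirms both maps are isomorphisms and that $\WJQ$ is exhausted.

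The main obstacle will be the commutator computation: one must carefully expand $E_{\tilde{m}\otimes l} \circ E_{\tilde{n}\otimes l''} \circ E_{\tilde{m}\otimes l}^{-1} \circ E_{\tilde{n}\otimes l''}^{-1}$ using the defining transvection formula, track the cross terms involving $(\tilde{m}, \tilde{n})$ and the pairings $(l, \cdot)$, $(l'', \cdot)$ on $J$, and verify that the lift-dependent pieces cancel so that the result depends only on $m, n \in L(J)_{\Q}$ and on $l \wedge l'' \in \wedge^{2}J_{\Q}$. Getting the sign and the identification $\wedge^{2}J_{\Q} \simeq {\Q}$ consistent with the stated product law (in particular that the cocycle is $(v_{2}, w_{1})$ rather than its negative or a symmetrized version) is the delicate bookkeeping that the lemma's explicit normalization is pinning down.
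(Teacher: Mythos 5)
Your proposal is correct, but it takes a genuinely different route from the paper's proof. The paper performs no commutator computation in ${\rm O}(L_{{\Q}})$: it chooses a rank $1$ primitive sublattice $I\subset J$, sets $\bar{J}=(J/I)\otimes I\subset L(I)$, and restricts the exact sequence \eqref{eqn: canonical sequence GIQ} for ${\GIQ}$ (already established in \S\ref{ssec: stab 0dim Q}) to ${\WJQ}$. This gives ${\WJQ}\cap{\UIQ}=\bar{J}_{{\Q}}^{\perp}\cap L(I)_{{\Q}}$ containing ${\UJQ}=\bar{J}_{{\Q}}$, identifies the image of ${\WJQ}$ in ${\Or}(L(I)_{{\Q}})$ with the Eichler transvections of $L(I)_{{\Q}}$ with respect to the isotropic line $\bar{J}_{{\Q}}$, and hence yields the exact sequence \eqref{eqn: split WJQ auxiliary I} and a three-factor bijection ${\WJQ}\simeq \bar{J}_{{\Q}}\times(L(J)_{{\Q}}\otimes I_{{\Q}})\times(L(J)_{{\Q}}\otimes(J/I)_{{\Q}})$; the Heisenberg law then comes from the adjoint identity $\gamma E_{w}\gamma^{-1}=E_{\gamma w}$ applied to the section acting on the abelian kernel, so all computations stay inside the previously established $0$-dimensional picture. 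You instead verify everything directly in $L_{{\Q}}$: membership of $E_{\tilde{m}\otimes l}$ in ${\WJQ}$ and of $E_{l\otimes l'}$ in ${\UJQ}$, the commutator of two transvections with distinct isotropic directions producing the central element given by the symplectic pairing, and surjectivity by subtracting transvections until the residual element acts trivially on $J^{\perp}_{{\Q}}$. That is a valid, more self-contained argument which makes the symplectic cocycle explicit (the paper leaves this implicit in the final sentence of its proof); the paper's route is shorter given \S\ref{sec: 0dim cusp}, avoids sign bookkeeping, and proves along the way the inclusion \eqref{eqn: UJQ in UIQ}, which is reused later (e.g.\ in \eqref{eqn: UJZ in UIZ} and Proposition \ref{prop: 1dim reduce to 0dim}). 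Two cautions for your version: first, a commutator only pins down the antisymmetrization of the $2$-cocycle, so to land on the stated normalization $(v_{2},w_{1})$ you must also use that transvections with a fixed isotropic direction form abelian subgroups and that every element factors uniquely as a central element times $E_{\tilde{m}_{1}\otimes l}E_{\tilde{m}_{2}\otimes l''}$ --- you flag this yourself; second, your surjectivity step should not lean on ``the standard description of the unipotent radical,'' since that description is essentially what the lemma asserts --- the honest version is the concrete one: kill the action on $J^{\perp}_{{\Q}}$ by transvections, then identify $\ker({\GJQ}\to{\GL}(J^{\perp}_{{\Q}}))$ with $\wedge^{2}J_{{\Q}}$ directly from nondegeneracy of the quadratic form.
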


\begin{proof}
We choose a rank $1$ primitive sublattice $I$ of $J$ and put 
$\bar{J}=(J/I)\otimes I \subset L(I)$. 
Note that $\bar{J}\simeq \wedge^{2}J$ naturally. 
%By definition we have ${\WJQ}\subset {\GIQ}$. 
We restrict the sequence \eqref{eqn: canonical sequence GIQ} for ${\GIQ}$ 
to ${\WJQ}\subset {\GIQ}$. 
It is clear that  
${\WJQ} \cap {\UIQ} = \bar{J}_{{\Q}}^{\perp}\cap L(I)_{{\Q}}$, 
which contains ${\UJQ}$ with  
\begin{equation}\label{eqn: UJQ in UIQ}
{\UJQ} = (\bar{J}_{{\Q}}^{\perp})^{\perp} = 
\bar{J}_{{\Q}} \simeq \wedge^{2}J_{{\Q}} \quad \subset {\UIQ}. 
\end{equation} 
The image of 
${\WJQ}\to {\Or}(L(I)_{{\Q}})$ 
is the subgroup of the stabilizer of $\bar{J}_{{\Q}}$ 
that acts trivially on $\bar{J}_{{\Q}}$ and 
$\bar{J}_{{\Q}}^{\perp}/\bar{J}_{{\Q}}$. 
This consists of Eichler transvections of $L(I)_{{\Q}}$ with respect to $\bar{J}_{{\Q}}$, 
hence isomorphic to 
$(\bar{J}_{{\Q}}^{\perp}/\bar{J}_{{\Q}})\otimes \bar{J}_{{\Q}} \simeq L(J)_{{\Q}}\otimes (J/I)_{{\Q}}$. 
In this way we obtain the exact sequence 
\begin{equation}\label{eqn: split WJQ auxiliary I} 
0 \to \bar{J}_{{\Q}}^{\perp}\cap L(I)_{{\Q}} \to {\WJQ} \to L(J)_{{\Q}}\otimes (J/I)_{{\Q}} \to 0. 
\end{equation}

We choose lifts 
%$J_{{\Q}}^{\perp}/J_{{\Q}}\hookrightarrow I_{{\Q}}^{\perp}/I_{{\Q}}\hookrightarrow I_{{\Q}}^{\perp}$. 
$L(J)_{{\Q}}\hookrightarrow J_{{\Q}}^{\perp}$ and $(J/I)_{{\Q}}\hookrightarrow J_{{\Q}}$. 
This induces a section of \eqref{eqn: split WJQ auxiliary I}  
which consists of the Eichler transvections $E_{w}$ of $L_{{\Q}}$ with 
$w\in L(J)_{{\Q}}\otimes (J/I)_{{\Q}}$. 
%Here $(J/I)_{{\Q}}, L(J)_{{\Q}} \subset I^{\perp}_{{\Q}}$ by the chosen lift. 
Together with the splitting 
$\bar{J}_{{\Q}}^{\perp}\cap L(I)_{{\Q}} \simeq \bar{J}_{{\Q}} \oplus (L(J)_{{\Q}}\otimes I_{{\Q}})$,  
%of the first term of \eqref{eqn: split WJQ auxiliary I},  
we obtain a bijection 
\begin{equation*} 
{\WJQ} \simeq \bar{J}_{{\Q}}\times (L(J)_{{\Q}}\otimes I_{{\Q}}) \times (L(J)_{{\Q}}\otimes (J/I)_{{\Q}}). 
\end{equation*}
This gives an isomorphism with the Heisenberg group. 
\end{proof}

Note that ${\UJQ}$ is not just the center of ${\WJQ}$, 
but also the center of ${\GJQ}$. 
This is the reason we put the determinant $1$ condition in the definition of ${\GJQ}$. 

The action of ${\GJQ}$ on the Siegel domain model can be described through the filtration 
${\UJQ}\subset {\WJQ}\subset {\GJQ}$. 
By definition ${\UJQ}$ acts on ${\VJ}\subset {\proj}(J_{{\C}}^{\perp})^{\vee}$ trivially 
and ${\WJQ}$ acts on ${\HJ}\subset {\proj}J_{{\C}}^{\vee}$ trivially. 
We let $U(J)_{{\C}}\subset {\rm O}(L_{{\C}})$ be the group of 
Eichler transvections $E_{l\otimes l'}$ with $l, l'\in J_{{\C}}$. 
Then $U(J)_{{\C}}\simeq \wedge^{2}J_{{\C}}$ acts on ${\D}(J)$ leaving ${\VJ}$ fixed. 
One observes that 
\begin{itemize}
\item ${\D}(J)\to {\VJ}$ is a principal $U(J)_{{\C}}$-bundle. 
\item ${\VJQ}\simeq L(J)_{{\Q}}\otimes J_{{\Q}}$ acts on ${\VJ}\to {\HJ}$ by 
relative translation through  
${\VJ}\simeq  L(J)_{{\C}} \otimes (\underline{J_{{\C}}}/F^{1,0})$. 
\item If we choose a splitting for $J_{{\Q}}$, 
which induces a splitting of \eqref{eqn: WJQ sequence} and 
the above isomorphism for ${\VJ}$, 
then the section of ${\rm O}(L(J)_{{\Q}})\times {\rm SL}(J_{{\Q}})$ acts on ${\VJ}\to {\HJ}$ by 
the equivariant action of ${\rm SL}(J_{{\Q}})$ on $\underline{J_{{\C}}}/F^{1,0}$ 
and the linear action of ${\rm O}(L(J)_{{\Q}})$ on $L(J)_{{\C}}$. 
\end{itemize}

\subsection{Stabilizer over ${\Z}$}\label{ssec: stab Z 1dim cusp}

Let ${\G}$ be a finite-index subgroup of ${\OL}$. 
We set  
\begin{equation*}
{\GJZ} = {\GJQ}\cap {\G}, \quad 
{\WJZ} = {\WJQ} \cap {\G}, \quad 
{\UJZ} = {\UJQ} \cap {\G}, 
\end{equation*}
and consider the quotients  
\begin{equation*}
{\GJZbar} = {\GJZ}/{\UJZ}, \quad 
{\VJZ} = {\WJZ}/{\UJZ}, \quad 
\Gamma_{J} = {\GJZ}/{\WJZ},  
\end{equation*}
\begin{equation*}
{\GJQbar} = {\GJQ}/{\UJZ}, \; \;  
{\WJQZ} = {\WJQ}/{\UJZ}, \; \;  
{\UJQZ} = {\UJQ}/{\UJZ}.  
\end{equation*}
%Then ${\VJZ}$ is a lattice on ${\VJQ}=L(J)_{{\Q}}\otimes J_{{\Q}}$, and  
%we have a natural injective map 
%$\Gamma_{J} \hookrightarrow {\rm SL}(J)\times {\rm O}(L(J))$. 
By definition we have the canonical exact sequences 
%\begin{equation*}
%0 \to {\UJZ} \to {\WJZ} \to {\VJZ} \to 0, 
%\end{equation*}
\begin{equation}\label{eqn: GIZbar sequence}
0 \to {\VJZ} \to {\GJZbar} \to \Gamma_{J} \to 1, 
\end{equation}
%These are canonically embedded in the exact sequences 
\begin{equation}\label{eqn: GJQbar sequence}
0 \to {\WJQZ} \to {\GJQbar} \to {\rm O}(L(J)_{{\Q}})\times {\rm SL}(J_{{\Q}}) \to 1,   
\end{equation} 
\begin{equation*}
0 \to {\UJQZ} \to {\WJQZ} \to {\VJQ} \to 0.  
\end{equation*}
Then \eqref{eqn: GIZbar sequence} is canonically embedded in \eqref{eqn: GJQbar sequence}. 
We have ${\VJZ}\cap {\UJQZ} = \{ 0 \}$ as subgroups of ${\WJQZ}$  
because the projection ${\VJZ}\to {\VJQ}$ is injective. 
Note that ${\UJQZ}$ is the group of torsion points of the $1$-dimensional torus $T(J)=U(J)_{{\C}}/{\UJZ}$. 
The sequence \eqref{eqn: GJQbar sequence} splits (non-canonically) 
if we choose a splitting for $J_{{\Q}}$, 
though this does not mean that \eqref{eqn: GIZbar sequence} splits.

\subsection{Partial toroidal compactification}\label{ssec: toroidal cpt 1dim cusp}

We denote ${\TJ}={\D}(J)/{\UJZ}$ and ${\XJ}={\D}/{\UJZ}$. 
By the description so far, 
${\TJ}\to {\VJ}$ is a principal $T(J)$-bundle 
acted on equivariantly by 
${\GIZbar}<{\GIQbar}$, 
in which ${\XJ}$ is a punctured disc bundle. 
The action of ${\GIZbar}$ on ${\VJ}\to {\HJ}$ is described as follows. 
We choose a splitting for $J_{{\Q}}$ to give a zero section of ${\VJ}$  
and a splitting of ${\GJQbar}$ in \eqref{eqn: GJQbar sequence}. 
We express an element $\gamma$ of ${\GJZbar}\subset {\GJQbar}$ as 
$\gamma=(\gamma_{1}, \gamma_{2}, \alpha)$ 
accordingly, where 
$\gamma_{1}\in {\rm O}(L(J))$, $\gamma_{2}\in {\rm SL}(J)$ and $\alpha\in {\WJQZ}$. 
Then $\gamma$ acts on ${\VJ}\simeq L(J)_{{\C}}\otimes (\underline{J_{{\C}}}/F^{1,0})$ 
by the equivariant action by $(\gamma_{1}, \gamma_{2})$ and 
the translation by $[\alpha]\in {\VJQ}\simeq L(J)_{{\Q}}\otimes J_{{\Q}}$. 
Thus ${\VJ}/{\VJZ}$ is a fibration of abelian varieties over ${\HJ}$ 
isogenous to the self fiber product of the universal elliptic curve.  
The group $\Gamma_{J}$ acts on ${\VJ}/{\VJZ}$ by the equivariant action plus some possible translation. 
%\begin{itemize}
%\item ${\VJZ}$ acts on ${\VJ}\to {\HJ}$
%as relative translation by the lattice ${\VJZ}\subset L(J)_{{\Q}}\otimes J_{{\Q}}$. 
%The quotient ${\VJ}/{\VJZ}$ is thus a fibration of abelian varieties over ${\HJ}$ 
%isogenous to the self product of the universal elliptic curve. 
%\item For $\gamma\in \Gamma_{J}$ we take its lift $\tilde{\gamma}\in {\GIZbar}$ 
%and write $\tilde{\gamma}=(\alpha, \gamma_{1}, \gamma_{2})$ 
%according to the chosen splitting of ${\GIQbar}$, 
%where $\alpha\in {\WJQZ}$, $\gamma_{1}\in {\rm SL}(J)$ and $\gamma_{2}\in {\rm O}(L(J))$. 
%Then $\gamma$ acts on ${\VJ}/{\VJZ}$ by the equivariant action by $(\gamma_{1}, \gamma_{2})$ 
%and the translation by $[\alpha]\in {\VJQ}/{\VJZ}$. 
%\end{itemize}

Now let $\overline{T(J)}\simeq {\C}$ be the canonical compactification of the torus $T(J)$. 
We take the relative torus embedding 
${\TJcpt}={\TJ}\times_{T(J)}\overline{T(J)}$, 
and let ${\XJcpt}$ be the interior of the closure of ${\XJ}$ in ${\TJcpt}$. 
This is the partial compactification of ${\XJ}$ over the $1$-dimensional cusp $J$. 
Note that no choice of fan is required: this is canonical. 
The boundary divisor of ${\XJcpt}$ is canonically isomorphic to ${\VJ}$.  

The relationship with a partial compactification over an adjacent $0$-dimensional cusp 
$I \subset J$ is as follows. 
Recall that 
$\bar{J}=(J/I)\otimes I \simeq \wedge^{2}J$ is an isotropic sublattice of $L(I)$, 
oriented by the orientation of $J$. 
The ray $\sigma_{J}=(\bar{J}_{{\R}})_{\geq 0}$ is in the closure of the positive cone, 
and it is contained in any ${\GIZ}$-admissible fan $\Sigma$. 
The torus embedding 
$T(I)\hookrightarrow T(I)^{\sigma_{J}}$ defined by $\sigma_{J}$ 
is a Zariski open set of $T(I)^{\Sigma}$. 
By \eqref{eqn: UJQ in UIQ} we have 
$U(J)_{{\R}}=\bar{J}_{{\R}}\subset U(I)_{{\R}}$ and 
\begin{equation}\label{eqn: UJZ in UIZ}
{\UJZ} = \bar{J}_{{\R}} \cap {\UIZ} = {\R}\sigma_{J} \cap {\UIZ}. 
\end{equation}
Therefore the inclusion ${\D}(J)\subset {\DI}$ induces the etale map 
\begin{equation}\label{eqn: glue}
{\TJcpt}\to T(I)^{\sigma_{J}}\subset T(I)^{\Sigma}, 
\end{equation} 
which maps the boundary divisor of ${\TJcpt}$ 
to the unique boundary divisor of $T(I)^{\sigma_{J}}$.  
It may be worth noting that ${\UIZ}\subset {\GJZ}$.

%%%%%
%%Irregular 1-dimensional cusp
%%%%%

\section{Irregular $1$-dimensional cusps}\label{sec: irr 1dim cusp}

In this section we define and study irregular $1$-dimensional cusps. 
For simplicity we assume $b\geq 3$ so that $L(J)\ne \{ 0 \}$. 
Let ${\G}$ be a finite-index subgroup of ${\OL}$ and 
$J$ be a rank $2$ primitive isotropic sublattice of $L$. 
We keep the notation from \S \ref{sec: 1dim cusp}. 
Irregularity of the $1$-dimensional cusp $J$ can be characterized as follows. 

\begin{proposition}
The following conditions are equivalent. 
\begin{enumerate}
\item ${\UJZ}\ne {\UJZZ}$ where ${\UJZZ}={\UJQ}\cap {\GG}$. 
\item $-{\id}\not\in {\G}$ and $-E_{w}\in {\GJZ}$ for some $w\in \wedge^{2}J_{{\Q}}$. 
\item $-{\id}\not\in {\G}$ and ${\GJZbar}$ contains an element $\gamma$ of finite order 
whose image in ${\rm O}(L(J))\times {\rm SL}(J)$ is $(-{\id}_{L(J)}, -{\id}_{J})$. 
\item ${\GJZbar}$ contains an element $\gamma$ which acts trivially on ${\VJ}$ but nontrivially on ${\XJ}$. 
\end{enumerate}
When these hold, the element $\gamma$ of ${\GJZbar}$ in (3), (4) is given by $-E_{w}$ in (2), 
has order $2$, and is unique. 
\end{proposition}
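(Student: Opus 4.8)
The plan is to mirror the proof of Proposition \ref{prop: characterize 0dim irregular}, running the cycle $(1)\Rightarrow(2)\Rightarrow(3)\Rightarrow(2)$ together with $(2)\Rightarrow(4)\Rightarrow(2)$ and then the closing assertions. The one structural novelty compared with the $0$-dimensional case is that the unipotent radical $\WJQ$ is now the nonabelian Heisenberg group rather than a vector space, so an element $-E_{w}$ with $w\in\wedge^{2}J_{{\Q}}$ has infinite order in $\GJZ$ (indeed $(-E_{w})^{2}=E_{2w}$). The remedy, reflected in the wording of $(3)$ and $(4)$, is to pass to the quotient $\GJZbar=\GJZ/\UJZ$, where $E_{2w}$ dies and the image of $-E_{w}$ has order $2$. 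Throughout I would use the central fact that $-{\id}_{L}$ acts trivially on all of $\WJQ$: it fixes $\UJQ=\wedge^{2}J_{{\Q}}$ and acts on $\VJQ=L(J)_{{\Q}}\otimes J_{{\Q}}$ by $(-{\id}_{L(J)})\otimes(-{\id}_{J})={\id}$, so $-{\id}_{L}$ commutes with every Eichler transvection in $\WJQ$.

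For the easy half, $(1)\Leftrightarrow(2)$ is verbatim the $0$-dimensional argument: using $\GG=\G\sqcup-\G$ I convert an element of $\UJZZ\setminus\UJZ$ into $-E_{w}\in\GJZ$, and conversely note that $-E_{w},E_{w}\in\G$ would force $-{\id}\in\G$. For $(2)\Rightarrow(3)$ I record that the image of $-E_{w}$ in ${\rm O}(L(J))\times{\rm SL}(J)$ is $(-{\id}_{L(J)},-{\id}_{J})$, since $E_{w}$ lies in the kernel $\WJQ$ and $-{\id}_{L}$ contributes the stated pair; moreover $(-E_{w})^{2}=E_{2w}\in\UJQ\cap\GJZ=\UJZ$, so the class of $-E_{w}$ in $\GJZbar$ has order dividing $2$, and it is nontrivial because its image downstairs is $(-{\id},-{\id})\neq({\id},{\id})$, hence order exactly $2$.

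The substantive implication is $(3)\Rightarrow(2)$, where finiteness of order is used decisively. Given $\gamma\in\GJZbar$ of finite order with image $(-{\id}_{L(J)},-{\id}_{J})$, I would lift to $\tilde\gamma\in\GJZ$ and set $\delta=-{\id}_{L}\cdot\tilde\gamma$; then $\delta\in\WJQ$, and the goal is that its class $v\in\VJQ$ vanishes. Since $(-{\id},-{\id})$ has order $2$, the order $n$ of $\gamma$ is even, so $\tilde\gamma^{n}=(-{\id}_{L})^{n}\delta^{n}=\delta^{n}$, which lies in $\UJZ$; as the projection $\WJQ\to\VJQ$ is a homomorphism onto a $\Q$-vector space and $\UJZ\subset\UJQ$ maps to $0$, the class of $\tilde\gamma^{n}$ is $nv=0$, forcing $v=0$. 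Hence $\delta=E_{w}$ and $\tilde\gamma=-E_{w}$, and $-{\id}\notin\G$ is carried over directly. For $(2)\Leftrightarrow(4)$ I would analyze the Siegel model: writing $\gamma=(\gamma_{1},\gamma_{2},\alpha)$ in a splitting of \eqref{eqn: GJQbar sequence}, triviality on $\VJ$ forces $\gamma_{2}=\pm{\id}_{J}$ (triviality on $\HJ$) and then, on the fibre $L(J)_{{\C}}\otimes(\underline{J_{{\C}}}/F^{1,0})$, either $(\gamma_{1},\gamma_{2})=({\id},{\id})$ or $(\gamma_{1},\gamma_{2})=(-{\id}_{L(J)},-{\id}_{J})$ (the tensor cancellation again), with vanishing translation part. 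The first alternative puts $\gamma$ in $\GJZbar\cap\UJQZ$, which is $\{0\}$ since any such element lifts into $\GJZ\cap\UJQ=\UJZ$, so it acts trivially on $\XJ$; the second reproduces $-E_{w}$, acting on $\XJ$ by the $T(J)$-translation $[w]$, nonzero exactly when $-{\id}\notin\G$.

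Finally the order and uniqueness statements follow as above: any two solutions of $(3)$ differ by an element of $\ker(\GJZbar\to{\rm O}(L(J))\times{\rm SL}(J))=\VJZ$ which, being of the form $E_{w-w'}$, also lies in $\UJQZ$, so $\VJZ\cap\UJQZ=\{0\}$ forces equality. The main obstacle I anticipate is bookkeeping around the Heisenberg structure, specifically verifying that $-{\id}_{L}$ is genuinely central on $\WJQ$ and cleanly propagating the distinction between the infinite-order $-E_{w}\in\GJZ$ and its order-$2$ image in $\GJZbar$, rather than any single deep step.
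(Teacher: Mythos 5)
Your proposal is correct, and its architecture largely coincides with the paper's: $(1)\Leftrightarrow(2)$ is the same transcription of the $0$-dimensional argument, $(2)\Rightarrow(3)$ is the same computation of the image and of $(-E_{w})^{2}=E_{2w}\in{\UJZ}$, and your treatment of $(4)$ via a splitting $\gamma=(\gamma_{1},\gamma_{2},\alpha)$ of \eqref{eqn: GJQbar sequence}, the tensor cancellation $(-{\id}_{L(J)})\otimes(-{\id}_{J})={\id}$, and the dichotomy $({\id},{\id})$ versus $(-{\id}_{L(J)},-{\id}_{J})$ is exactly the paper's proof of $(4)\Rightarrow(2),(3)$. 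The one genuine deviation is how you come back from $(3)$: the paper proves $(3)\Rightarrow(4)$ geometrically (the element acts on ${\VJ}$ by a translation, which must vanish since the element has finite order) and then closes the cycle through $(4)\Rightarrow(2)$, whereas you prove $(3)\Rightarrow(2)$ purely algebraically, writing $\tilde\gamma=-{\id}_{L}\cdot\delta$ with $\delta\in{\WJQ}$, using centrality of $-{\id}_{L}$ and evenness of the order $n$ to get $\delta^{n}=\tilde\gamma^{n}\in{\UJZ}$, and then projecting to ${\VJQ}$ to kill the translation component. Both arguments hinge on the same fact --- ${\VJQ}$ is a ${\Q}$-vector space, hence torsion-free --- so this is a local reroute rather than a new method, but your version has the merit of never invoking the Siegel-domain geometry for this step, while the paper's version yields $(3)\Rightarrow(4)$ directly (which you instead recover through $(2)\Leftrightarrow(4)$). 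Your uniqueness argument via ${\VJZ}\cap{\UJQZ}=\{0\}$ is also valid (one can shortcut it: $E_{w'-w}\in{\G}\cap{\UJQ}={\UJZ}$). Two small remarks: the ``structural novelty'' you highlight is not actually special to the $1$-dimensional case, since already at a $0$-dimensional cusp $(-E_{w})^{2}=E_{2w}$ has infinite order in ${\GIZ}$ and only its class modulo ${\UIZ}$ has order $2$ --- what is genuinely new is the nonabelian Heisenberg structure of ${\WJQ}$, which your $(3)\Rightarrow(2)$ handles correctly by passing to the abelian quotient ${\VJQ}$; and in $(2)\Rightarrow(4)$ the nontriviality of the action on ${\XJ}$ (from $E_{w}\notin{\UJZ}$, forced by $-{\id}\notin{\G}$) deserves to be stated explicitly, though your parenthetical ``nonzero exactly when $-{\id}\notin{\G}$'' does capture it, and indeed does so more explicitly than the paper's own one-line proof of this implication.
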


\begin{definition}
We say that the $1$-dimensional cusp $J$ is \textit{irregular} 
when these properties hold, and \textit{regular} otherwise. 
\end{definition}

\begin{proof}
The equivalence $(1) \Leftrightarrow (2)$ is similar to $(1) \Leftrightarrow (2)$ in 
Proposition \ref{prop: characterize 0dim irregular}. 
The quotient ${\UJZZ}/{\UJZ}\simeq {\Z}/2$ is generated by $E_{w}$ in (2). 

$(2) \Rightarrow (4)$: 
Since $E_{w}$ for $w\in \wedge^{2}J_{{\Q}}$ acts trivially on ${\VJ}$, so does $-E_{w}$. 

$(2) \Rightarrow (3)$: 
The element $\gamma=[-E_{w}]$ of ${\GJZbar}$ is of order $2$ and acts on $J$, $L(J)$ by $-1$. 

$(3) \Rightarrow (4)$: 
By the description of the ${\GJZbar}$-action on ${\VJ}$ in \S \ref{ssec: toroidal cpt 1dim cusp}, 
we find that the element $\gamma$ of (3) acts on ${\VJ}$ by some translation. 
Since $\gamma$ is of finite order by assumption, 
this translation must be trivial. 

$(4) \Rightarrow (2), (3)$: 
Suppose that $\gamma \in {\GJZbar}$ acts trivially on ${\VJ}$ but nontrivially on ${\XJ}$. 
We take a splitting of ${\GJQbar}$ and express $\gamma$ as 
$(\gamma_{1}, \gamma_{2}, \alpha)$ as in \S \ref{ssec: toroidal cpt 1dim cusp}. 
Since $\gamma$ acts on $\mathbb{H}_{J}$ trivially, we must have 
$\gamma_{2}={\id}_{J}$ or $-{\id}_{J}$. 
Then, since $\gamma$ acts on ${\VJ}\simeq L(J)_{{\C}}\otimes (\underline{J_{{\C}}}/F^{1,0})$ trivially, 
we see that  
$(\gamma_{1}, \gamma_{2}) = ({\id}_{L(J)}, {\id}_{J})$ or $(-{\id}_{L(J)}, -{\id}_{J})$, 
and the image of $\alpha \in {\WJQZ}$ in ${\VJQ}$ must be $0$, 
namely $\alpha\in {\UJQZ}$. 
The case $(\gamma_{1}, \gamma_{2}) = ({\id}_{L(J)}, {\id}_{J})$ cannot occur 
because then 
$\gamma\in {\UJQZ}\cap {\VJZ} = \{ 0 \}$. 
Therefore 
$\gamma=(-{\id}_{L(J)}, -{\id}_{J}, E_{w})$ 
for some $w\in \wedge^{2}J_{{\Q}}$. 
Since 
$-{\id}_{L}=(-{\id}_{L(J)}, -{\id}_{J}, 0)$ 
with respect to this (and any) splitting, 
we find that 
$\gamma=-E_{w}$. 
Thus $-E_{w}\in {\G}$. 
Finally, we have $-{\id}\not\in {\G}$, 
for otherwise $E_{w}=-\gamma$ would be contained in ${\UJZ}$, which in turn implies that  
$\gamma$ acts trivially on ${\XJ}$. 
\end{proof}

As in the case of $0$-dimensional cusps, 
$U(J)_{{\Z}}'$ is the projection image of 
$U(J)_{{\Z}}^{\star}=(\{ \pm{\id} \} \cdot {\UJQ})\cap {\G}$ 
in ${\UJQ}$, and we have 
$U(J)_{{\Z}}^{\star}/{\UJZ}=\langle -E_{w} \rangle$ 
when $J$ is irregular. 

Since the boundary divisor of ${\XJcpt}$ is naturally isomorphic to ${\VJ}$, 
the condition $(4)$ can be restated as follows. 

\begin{corollary}\label{cor: irregular 1dim ramify}
A $1$-dimensional cusp $J$ is irregular if and only if 
${\XJcpt}\to {\XJcpt}/{\GJZbar}$ is ramified along 
the boundary divisor of ${\XJcpt}$. 
In that case, the ramification index is $2$, and 
the unique nontrivial element of ${\GJZbar}$ fixing the boundary divisor is given by $-E_{w}$. 
\end{corollary}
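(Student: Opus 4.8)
The plan is to deduce this corollary directly from condition (4) of the preceding proposition, using the canonical identification of the boundary divisor of $\XJcpt$ with $\VJ$ from \S\ref{ssec: toroidal cpt 1dim cusp}. Under that identification, an element of $\GJZbar$ fixes the boundary divisor pointwise exactly when it acts trivially on $\VJ$, and the quotient $\XJcpt\to\XJcpt/\GJZbar$ is ramified along the boundary divisor precisely when the inertia subgroup (the stabilizer of a generic boundary point) acts nontrivially on the normal line. So I would first classify the elements $\gamma\in\GJZbar$ fixing the boundary divisor pointwise. Writing $\gamma=(\gamma_{1},\gamma_{2},\alpha)$ after a splitting as in \S\ref{ssec: toroidal cpt 1dim cusp}, triviality on $\VJ\simeq L(J)_{{\C}}\otimes(\underline{J_{{\C}}}/F^{1,0})$ forces $(\gamma_{1},\gamma_{2})\in\{({\id}_{L(J)},{\id}_{J}),(-{\id}_{L(J)},-{\id}_{J})\}$ and $\alpha\in\UJQZ$. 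In the first case $\gamma\in\VJZ\cap\UJQZ=\{0\}$, so $\gamma={\id}$; hence every nontrivial inertia element has image $(-{\id}_{L(J)},-{\id}_{J})$ and, comparing with $-{\id}_{L}=(-{\id}_{L(J)},-{\id}_{J},0)$, equals $-E_{w}$ for some $w\in\wedge^{2}J_{{\Q}}$.

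Next I would decide when such an $-E_{w}$ genuinely ramifies the boundary divisor. If $-{\id}\in{\G}$, then $-E_{w}\in{\G}$ forces $E_{w}\in{\UJZ}$, so $[w]=0$ and the only such inertia element is $-{\id}$ itself, which acts trivially on all of $\XJcpt$ and causes no ramification; this is consistent with $J$ being regular. If instead $-{\id}\notin{\G}$, then $w\notin{\UJZ}$, so $[w]\neq 0$, and by condition (2) of the preceding proposition this is exactly the case that $J$ is irregular. This yields the equivalence ``ramified along the boundary divisor $\Leftrightarrow$ $J$ irregular,'' and the final clause of that proposition gives that the inertia-generating element $-E_{w}$ is unique and of order $2$.

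It then remains to compute the ramification index, and for this I would pin down the normal action of $-E_{w}$. Since $-{\id}$ acts trivially on $\D$ and hence projectively on every associated space, $-E_{w}$ acts on the $T(J)$-fibres of $\TJcpt=\TJ\times_{T(J)}\overline{T(J)}$ exactly as the translation $E_{w}$, namely by the torsion point $[w]\in\UJQZ$. As $-E_{w}$ has order $2$ we have $2w\in{\UJZ}$ but $w\notin{\UJZ}$, so $[w]$ is the unique nontrivial $2$-torsion point of $T(J)$; in the torus coordinate $q$ on $\overline{T(J)}\simeq{\C}$ this translation is $q\mapsto -q$, fixing the boundary divisor $\{q=0\}\simeq\VJ$ and acting by $-1$ on the normal line. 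Hence the inertia group $\langle -E_{w}\rangle\simeq{\Z}/2$ acts faithfully on the normal line and the ramification index is $2$, exactly parallel to Corollary \ref{cor: irregular 0dim ramify} in the $0$-dimensional case. I expect this last normalization — verifying that the generic inertia is precisely $\langle -E_{w}\rangle$ and that it acts by $-1$ on the normal direction, so that the index is exactly $2$ rather than larger — to be the only substantive step; the remainder is a repackaging of the preceding proposition.
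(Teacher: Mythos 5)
Your proposal is correct and takes essentially the same route as the paper: there the corollary is obtained simply by identifying the boundary divisor of ${\XJcpt}$ with ${\VJ}$, so that ramification along it becomes exactly condition (4) of the preceding proposition, whose uniqueness clause supplies the order-$2$ element $-E_{w}$ and the index. Your re-classification of the inertia elements just repeats the proposition's proof of $(4)\Rightarrow(2),(3)$, and your normal-line computation $q\mapsto -q$ makes explicit the index-$2$ statement that the paper leaves implicit.
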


By the condition (1), irregularity of a $1$-dimensional cusp reduces to 
that of an adjacent $0$-dimensional cusp as follows. 

\begin{proposition}\label{prop: 1dim reduce to 0dim}
Let $I\subset J$ be a rank $1$ primitive sublattice and 
$\sigma_{J}\subset U(I)_{{\R}}$ be the isotropic ray corresponding to $J$. 
Then $J$ is irregular if and only if 
$I$ is irregular and $\sigma_{J}$ is an irregular ray. 
\end{proposition}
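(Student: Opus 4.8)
The plan is to reduce the whole statement to a comparison of translation lattices on the single line $\R\sigma_{J}$ inside $U(I)_{\R}$, matching condition (1) in the characterization of irregularity of $J$ (the preceding proposition) with condition (1) in Proposition \ref{prop: irregular ray}. The starting observation is \eqref{eqn: UJQ in UIQ}: as a rational subspace of $U(I)_{\Q}$, the space $\UJQ=\bar{J}_{\Q}$ is exactly the $\Q$-line $\R\sigma_{J}\cap U(I)_{\Q}$ spanned by the isotropic ray $\sigma_{J}$.

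First I would record two lattice identities. Intersecting $\UJQ$ with $\G$ gives $\UJZ=\R\sigma_{J}\cap\UIZ$, which is precisely \eqref{eqn: UJZ in UIZ}. Running the identical argument with $\GG$ in place of $\G$ — using $\UJQ\cap\GG=\UJQ\cap(\UIQ\cap\GG)=\R\sigma_{J}\cap\UIZZ$, where the last step uses $\UIZZ\subset\UIQ$ and $\UJQ=\R\sigma_{J}\cap U(I)_{\Q}$ — yields the twin identity $\UJZZ=\R\sigma_{J}\cap\UIZZ$. Since irregularity of $J$ is condition (1) of the preceding proposition, namely $\UJZ\ne\UJZZ$, these two identities give at once
\[
J \text{ is irregular} \iff \R\sigma_{J}\cap\UIZ\ne\R\sigma_{J}\cap\UIZZ.
\]

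Next I would dispose of the case that $I$ is regular. By Proposition \ref{prop: characterize 0dim irregular}(1), $I$ regular means $\UIZ=\UIZZ$ (this covers both $-\id\in\G$ and $-\id\notin\G$ with $I$ regular). Then the two sides of the displayed equivalence coincide, so $J$ is regular; contrapositively, $J$ irregular forces $I$ irregular. As $\sigma_{J}$ is by convention regular whenever $I$ is regular (Definition \ref{def: irregular ray}), both directions of the proposition hold in this case.

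Finally, assume $I$ is irregular. By condition (1) of Proposition \ref{prop: irregular ray} (which is stated exactly under this hypothesis), $\sigma_{J}$ is an irregular ray iff $\sigma_{J}\cap\UIZ\ne\sigma_{J}\cap\UIZZ$. The one place the argument is not purely formal — the expected main obstacle — is confirming that this ray-wise condition is equivalent to the line-wise condition $\R\sigma_{J}\cap\UIZ\ne\R\sigma_{J}\cap\UIZZ$ appearing in the displayed equivalence. This holds because $\UIZ\subseteq\UIZZ$ restrict to two nested rank-one lattices on the line $\R\sigma_{J}$, and any two nested rank-one lattices agree iff their positive semigroups along the ray $\sigma_{J}$ agree. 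Combining this equivalence with the displayed characterization of irregularity of $J$ then gives $J$ irregular $\iff$ $I$ irregular and $\sigma_{J}$ an irregular ray, as desired.
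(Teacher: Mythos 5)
Your proof is correct and follows essentially the same route as the paper: both reduce the statement to the lattice identities $\UJZ=\R\sigma_{J}\cap\UIZ$ (equation \eqref{eqn: UJZ in UIZ}) and its twin $\UJZZ=\R\sigma_{J}\cap\UIZZ$, then match condition (1) of the characterization of irregular $1$-dimensional cusps against condition (1) of Proposition \ref{prop: irregular ray}. The paper's proof is just a terser version of yours; your extra care with the case of regular $I$ (via the convention in Definition \ref{def: irregular ray}) and with the ray-versus-line comparison fills in details the paper leaves implicit.
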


\begin{proof}
Recall that $\sigma_{J}$ is called irregular when 
${\R}\sigma_{J}\cap {\UIZ} \ne {\R}\sigma_{J} \cap {\UIZZ}$. 
By \eqref{eqn: UJZ in UIZ} we have 
${\R}\sigma_{J}\cap {\UIZ} = {\UJZ}$, 
and similarly 
${\R}\sigma_{J}\cap {\UIZZ} = {\UJZZ}$. 
This proves our assertion.  
\end{proof}

\begin{corollary}\label{cor: 1dim reduced to 0dim}
If ${\G}$ has no irregular $0$-dimensional cusp, 
it has no irregular $1$-dimensional cusp. 
\end{corollary}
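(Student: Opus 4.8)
The plan is to deduce this directly from Proposition \ref{prop: 1dim reduce to 0dim}, which characterizes irregularity of a $1$-dimensional cusp in terms of an adjacent $0$-dimensional cusp. Given a rank $2$ primitive isotropic sublattice $J$ of $L$, the first step is to produce a rank $1$ primitive sublattice $I\subset J$: I would simply take $I={\Z}l$ for any vector $l$ that is primitive in $J$. One small point to verify is that such $I$ is then primitive in $L$, not merely in $J$. This follows because $J$ is saturated in $L$ (so $L/J$ is torsion-free): if $l=kw$ with $w\in L$ and $k\geq 2$, then $kw\in J$ forces $w\in J$, contradicting primitivity of $l$ in $J$. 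Moreover $l$ is isotropic since $J$ is. Hence $I$ is a rank $1$ primitive isotropic sublattice of $L$, i.e.\ a $0$-dimensional cusp lying in the closure of the $1$-dimensional cusp for $J$.

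With such an $I$ fixed, let $\sigma_{J}\subset U(I)_{{\R}}$ be the isotropic ray corresponding to $J$, as in \S \ref{ssec: toroidal cpt 1dim cusp}. By hypothesis ${\G}$ has no irregular $0$-dimensional cusp, so in particular $I$ is regular for ${\G}$. Proposition \ref{prop: 1dim reduce to 0dim} then says that $J$ is irregular if and only if $I$ is irregular \emph{and} $\sigma_{J}$ is an irregular ray. Since the first of these two conditions already fails, $J$ cannot be irregular; that is, $J$ is regular. As $J$ was an arbitrary $1$-dimensional cusp, ${\G}$ has no irregular $1$-dimensional cusp.

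There is essentially no obstacle here: the entire content has been pushed into Proposition \ref{prop: 1dim reduce to 0dim}, and the corollary is a formal consequence of it. The only step requiring any care is the existence of a suitable adjacent $0$-dimensional cusp, i.e.\ the primitivity remark above, which is immediate from saturation of $J$ in $L$. It is worth noting that the choice of $I\subset J$ is immaterial: regularity of every $0$-dimensional cusp is assumed, so whichever primitive $l\in J$ one picks, the conclusion is the same.
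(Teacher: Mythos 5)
Your proof is correct and follows exactly the route the paper intends: the corollary is stated immediately after Proposition \ref{prop: 1dim reduce to 0dim} as its formal consequence, which is precisely your argument. Your extra verification that an adjacent $0$-dimensional cusp exists (primitivity of $I={\Z}l$ in $L$ via saturation of $J$) is a correct and harmless addition that the paper leaves implicit.
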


%%%%%
%%Toroidal compatification
%%%%%

\section{Toroidal compactification}\label{sec: toroidal cpt}

In this section we study singularities and ramification divisors in the boundary  
of a toroidal compactification of the modular variety. 
These are studied in \cite{GHS07}, \cite{Ma} under the condition $-{\id}\in {\G}$, 
and we explain what modification is necessary in the general case, 
especially at the irregular cusps. 

Let $L$ be a lattice of signature $(2, b)$ and 
${\G}$ be a subgroup of ${\OL}$ of finite index. 
The input data for constructing a toroidal compactification of 
${\FG}={\G}\backslash {\D}$ is 
a collection $\Sigma=(\Sigma_{I})_{I}$ of ${\GIZ}$-admissible fans (\S \ref{ssec: toroidal cpt 0dim}), 
one for each ${\G}$-equivalence class of rank $1$ primitive isotropic sublattices $I$ of $L$. 
No choice is required for $1$-dimensional cusps. 
Thus $\Sigma$ is a finite collection of independent fans. 

The toroidal compactification associated to $\Sigma$ 
is defined as (\cite{AMRT} p.163) 
\begin{equation*}
{\FGcpt} = \left( {\D} \sqcup \bigsqcup_{I} \mathcal{X}(I)^{\Sigma_{I}} 
\sqcup \bigsqcup_{J}{\XJcpt} \right) / \sim, 
\end{equation*}
where 
$I$ (resp.~$J$) ranges over all primitive isotropic sublattices of $L$ of rank $1$ (resp.~$2$), 
and $\sim$ is the equivalence relation generated by 
the following: 
\begin{itemize}
\item Action of $\gamma\in {\G}$ giving ${\D}\to {\D}$, 
$\mathcal{X}(I)^{\Sigma_{I}}\to \mathcal{X}(\gamma I)^{\Sigma_{\gamma I}}$ and 
${\XJcpt} \to \overline{\mathcal{X}(\gamma J)}$. 
\item The natural maps ${\D}\to \mathcal{X}(I)^{\Sigma_{I}}$ and ${\D}\to {\XJcpt}$. 
\item The etale gluing maps ${\XJcpt}\to \mathcal{X}(I)^{\Sigma_{I}}$ for $I\subset J$ 
given by \eqref{eqn: glue}. 
\end{itemize}

\begin{theorem}[\cite{AMRT}]\label{thm: AMRT}
The space ${\FGcpt}$ is a compact Moishezon space containing ${\FG}$ as a Zariski open set, 
and we have a morphism from ${\FGcpt}$ to the Baily-Borel compactification of ${\FG}$. 
For each cusp $I$, $J$, the natural map 
\begin{equation*}
\mathcal{X}(I)^{\Sigma_{I}}/{\GIZbar} \to {\FGcpt}, \qquad 
{\XJcpt}/{\GJZbar} \to {\FGcpt}
\end{equation*}
is locally isomorphic in an open neighborhood of boundary points lying over that cusp. 
\end{theorem}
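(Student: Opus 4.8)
The plan is to realize $\FGcpt$ as the Ash--Mumford--Rapoport--Tai toroidal compactification and to verify that the defining gluing data satisfy the hypotheses of the general construction in \cite{AMRT}. First I would treat each cusp chart in isolation. Over a $0$-dimensional cusp $I$, admissibility of $\Sigma_{I}$ (support equal to $\CII$, with finitely many cones modulo $\GIZ$) guarantees that the torus embedding $T(I)\hookrightarrow T(I)^{\Sigma_{I}}$ is a (generally non-complete) normal toric variety, so that $\mathcal{X}(I)^{\Sigma_{I}}$ is a normal analytic space on which $\GIZbar$ acts. The key point at this stage is that $\GIZbar$ acts properly discontinuously on $\mathcal{X}(I)^{\Sigma_{I}}$: on the interior $\XI$ this is inherited from the proper $\G$-action on $\D$, and along the boundary the finiteness-up-to-$\GIZ$ of the fan, together with the explicit description of the $\GIZbar$-action in \S\ref{ssec: toroidal cpt 0dim}, controls the stabilizers of the boundary strata. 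Thus each quotient $\mathcal{X}(I)^{\Sigma_{I}}/\GIZbar$ is a well-defined normal analytic space, and similarly $\XJcpt/\GJZbar$ over a $1$-dimensional cusp, where no fan is needed.

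Next I would check that the equivalence relation $\sim$ glues these charts into a separated analytic space containing $\FG$ as a Zariski-open set. The interior maps $\D\to\mathcal{X}(I)^{\Sigma_{I}}$ and the \'etale maps $\XJcpt\to\mathcal{X}(I)^{\Sigma_{I}}$ of \eqref{eqn: glue} are open embeddings away from the deeper boundary, so the real content is separatedness: one must show that two boundary points in distinct charts are identified by $\sim$ precisely when they lie in a common $\G$-orbit, and that the identification is then a local isomorphism. Here the properness of the $\G$-action on $\D$ is combined with the cusp structure: an element $\gamma\in\G$ carrying one boundary stratum near $I$ to another forces, via the induced map on the Baily--Borel boundary, that $\gamma$ stabilizes the relevant isotropic sublattice, so that the only identifications in a small neighborhood of the boundary over $I$ come from $\GIZ$ itself. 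Granting this, the natural maps $\mathcal{X}(I)^{\Sigma_{I}}/\GIZbar\to\FGcpt$ and $\XJcpt/\GJZbar\to\FGcpt$ are local isomorphisms near the boundary, which is exactly the third assertion.

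For compactness and the morphism to Baily--Borel, I would use the Baily--Borel compactification $\FG^{\ast}$, a normal projective variety. The toroidal charts map compatibly to the cusp strata of $\FG^{\ast}$: over a $0$-dimensional cusp the added strata of $\mathcal{X}(I)^{\Sigma_{I}}$ are the compact toric strata cut out by $\Sigma_{I}$, and over a $1$-dimensional cusp the boundary $\VJ\to\HJ$ fibres over the corresponding modular curve. Assembling these yields a continuous surjection $\pi\colon\FGcpt\to\FG^{\ast}$ extending the identity on $\FG$; since $\FG^{\ast}$ is compact and $\pi$ is proper with compact fibres, $\FGcpt$ is compact. Finally $\pi$ is proper and restricts to the identity on the dense open $\FG$, hence is bimeromorphic onto the projective variety $\FG^{\ast}$, so $\FGcpt$ is Moishezon.

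The step I expect to be the main obstacle is the separatedness and local-isomorphism claim of the second paragraph: controlling the equivalence relation near the boundary so that no unexpected identifications occur, and confirming that the boundary stratification induced by $\Sigma$ is compatible across overlapping charts, in particular along a $0$-dimensional cusp $I\subset J$ adjacent to a $1$-dimensional cusp $J$ via \eqref{eqn: UJZ in UIZ} and \eqref{eqn: glue}. This is precisely the delicate part of the AMRT formalism, and in the present self-contained setting it rests on the explicit stabilizer descriptions developed in \S\ref{sec: 0dim cusp} and \S\ref{sec: 1dim cusp}.
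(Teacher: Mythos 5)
Your plan does not match the paper's proof, and it contains a genuine gap. The paper does not reprove the AMRT construction at all: the theorem is cited from \cite{AMRT}, and the only argument supplied is a reconciliation of the paper's setup with AMRT's. That reconciliation is exactly what your sketch omits, and without it your opening step --- ``verify that the defining gluing data satisfy the hypotheses of the general construction in \cite{AMRT}'' --- fails as stated. The theory of \cite{AMRT} applies to subgroups of ${\rm O}^{+}(L_{{\R}})/\pm{\id}$, hence to ${\GG}/\pm{\id}$ rather than to ${\G}$ itself, and in AMRT's notation the lattice $U(F)_{{\Z}}$ is the full translation lattice ${\UIZZ}={\UIQ}\cap{\GG}$, whereas the charts in the theorem are built from ${\UIZ}={\UIQ}\cap{\G}$. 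These coincide unless $I$ is irregular --- the central phenomenon of this paper --- and at an irregular cusp the data genuinely differ: the group ${\GIZbar}$ acting on $\mathcal{X}(I)^{\Sigma_{I}}$ then contains the element $-E_{w}$, which acts as a \emph{pure torsion translation} of the torus $T(I)$ (condition (4) of Proposition \ref{prop: characterize 0dim irregular}). This configuration never occurs in AMRT's setting, where every element of the group acting as a pure translation already lies in the lattice that is quotiented out. So the gluing data of the theorem do not literally satisfy AMRT's hypotheses, and nothing in your proposal confronts this.

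The paper's actual proof is precisely the missing bridge: replace ${\UIZ}$ by ${\UIZZ}$, ${\XI}$ by ${\XII}={\D}/{\UIZZ}$, ${\GIZ}$ by $\Gamma'(I)_{{\Z}}=\langle{\GIZ},-{\id}\rangle/\pm{\id}$ (similarly at each $J$); observe that ${\XII}={\XI}$ or ${\XII}={\XI}/\langle -E_{w}\rangle$, so the two glued spaces coincide; and justify the local-isomorphism statement by the identity $\mathcal{X}(I)^{\Sigma_{I}}/{\GIZbar}=({\XII})^{\Sigma_{I}}/(\Gamma'(I)_{{\Z}}/{\UIZZ})$ of \eqref{eqn: /GIZ vs /GIZZ}. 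With this in hand, every assertion follows from \cite{AMRT} applied to ${\GG}/\pm{\id}$, whose action on ${\D}$ agrees with that of ${\G}$. Your alternative --- reproving the AMRT theorem directly for the ${\UIZ}$-charts --- could in principle be carried out, but the steps you explicitly defer (proper discontinuity along the boundary, separatedness of the glued space, properness over the Baily--Borel compactification) constitute the book-length content of \cite{AMRT}, and each would have to be re-examined in the presence of the pure translation $-E_{w}$; as written, your proposal neither executes that program nor reduces the statement to the cited theorem.
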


Perhaps a word might be in order because, strictly speaking, 
the theory of \cite{AMRT} is applied to the image of ${\G}$ in ${\rm O}^{+}(L_{{\R}})/\pm {\id}$, 
which is ${\GG}/\pm{\id}$, rather than ${\G}$ itself. 
Then ${\UIZ}$ should be replaced by ${\UIZZ}$,  
${\XI}={\D}/{\UIZ}$ by ${\XII}={\D}/{\UIZZ}$, 
${\GIZ}$ by $\Gamma'(I)_{{\Z}}=\langle {\GIZ}, -{\id} \rangle / \pm {\id}$, 
and similarly for $1$-dimensional cusps $J$. 
But since ${\XII}={\XI}$ or 
${\XII}={\XI}/\langle -E_{w} \rangle$ with $-E_{w}\in {\GIZ}$ 
(and similarly for $J$), we have naturally 
\begin{equation*}
\left( {\D} \sqcup \bigsqcup_{I} \mathcal{X}(I)^{\Sigma_{I}} \sqcup \bigsqcup_{J}{\XJcpt} \right) / \sim 
\; \; = \; \; 
\left( {\D} \sqcup \bigsqcup_{I} ({\XII})^{\Sigma_{I}} \sqcup \bigsqcup_{J}\overline{\mathcal{X}(J)'} \right) / \sim', 
\end{equation*}
where $\sim'$ is the equivalence relation similar to $\sim$. 
The last statement of Theorem \ref{thm: AMRT} (\cite{AMRT} p.175) is justified because we have 
\begin{equation*}
\mathcal{X}(I)^{\Sigma_{I}}/{\GIZbar} = 
({\XII})^{\Sigma_{I}} / (\Gamma'(I)_{{\Z}}/{\UIZZ}) 
\end{equation*}
(see also \eqref{eqn: /GIZ vs /GIZZ}), 
and similarly for $J$.

The reason we prefer to work with ${\UIZ}$ rather than ${\UIZZ}$ 
is that Fourier expansion of ${\G}$-modular forms of arbitrary weight 
can be done with ${\UIZ}$ 
%or in other words, the line bundle of modular forms of weight $1$ can be 
%extended over $\mathcal{X}_{I}^{\Sigma_{I}}$ 
(see \S \ref{sec: cusp form criterion}). 

If $D(\sigma)\subset \mathcal{X}(I)^{\Sigma_{I}}$ is 
the boundary divisor corresponding to a ray $\sigma\in \Sigma_{I}$, 
general points of $D(\sigma)$ lie over the $I$-cusp if and only if $\sigma$ is positive-definite. 
When $\sigma=\sigma_{J}$ is isotropic corresponding to a $1$-dimensional cusp $J\supset I$, 
$D(\sigma_{J})$ is glued with the boundary divisor of ${\XJcpt}$, 
and its general points lie over the $J$-cusp. 
By combining the last statement of Theorem \ref{thm: AMRT} with 
Corollaries \ref{cor: irregular 0dim ramify} and \ref{cor: irregular 1dim ramify}, 
we obtain the following. 

\begin{proposition}\label{prop: boundary ramification divisor}
(1) The projection $\mathcal{X}(I)^{\Sigma_{I}}\to {\FGcpt}$ is ramified along 
irregular boundary divisors of $\mathcal{X}(I)^{\Sigma_{I}}$ with ramification index $2$, 
and not ramified along other boundary divisors. 
If we take quotient by $U(I)_{{\Z}}^{\star}/{\UIZ}$, 
then $({\D}/U(I)_{{\Z}}^{\star})^{\Sigma_{I}}\to {\FGcpt}$ is not ramified along the boundary divisors. 

(2) The projection ${\XJcpt}\to {\FGcpt}$ is ramified along the unique boundary divisor 
(with index $2$) if and only if $J$ is irregular. 
If we take quotient by $U(J)_{{\Z}}^{\star}/{\UJZ}$, 
then $\overline{{\D}/U(J)_{{\Z}}^{\star}}\to {\FGcpt}$ is not ramified along the boundary divisor. 
\end{proposition}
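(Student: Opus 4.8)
The plan is to read off both parts of Proposition \ref{prop: boundary ramification divisor} as essentially formal consequences of the local structure theorem (Theorem \ref{thm: AMRT}) combined with the ramification analysis already carried out at the level of the partial compactifications. Since Theorem \ref{thm: AMRT} tells us that near a boundary point lying over the cusp $I$ (resp.\ $J$) the projection to ${\FGcpt}$ is locally isomorphic to the quotient $\mathcal{X}(I)^{\Sigma_{I}}/{\GIZbar}$ (resp.\ ${\XJcpt}/{\GJZbar}$), the ramification of $\mathcal{X}(I)^{\Sigma_{I}}\to {\FGcpt}$ along a boundary divisor is exactly the ramification of the quotient map $\mathcal{X}(I)^{\Sigma_{I}}\to \mathcal{X}(I)^{\Sigma_{I}}/{\GIZbar}$ along that divisor, and similarly for $J$. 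So the first move is to invoke Theorem \ref{thm: AMRT} to reduce each assertion to a statement purely about these local quotient maps.

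For part (1), once the reduction is made, I would directly cite Corollary \ref{cor: irregular 0dim ramify}, which states precisely that $\mathcal{X}(I)^{\Sigma_{I}}\to \mathcal{X}(I)^{\Sigma_{I}}/{\GIZbar}$ is ramified (with index $2$) along exactly the irregular boundary divisors, caused by the common order-two subgroup $\langle -E_{w}\rangle$, and unramified elsewhere. For part (2), I would cite Corollary \ref{cor: irregular 1dim ramify}, which gives that ${\XJcpt}\to {\XJcpt}/{\GJZbar}$ is ramified along the unique boundary divisor precisely when $J$ is irregular, again with index $2$ and with the fixing element being $-E_{w}$. These corollaries do the substantive work, so the body of the proof is short.

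The remaining point, and the one that requires genuine care, is the second sentence of each part: the claim that passing to the quotient by $U(I)_{{\Z}}^{\star}/{\UIZ}$ (resp.\ $U(J)_{{\Z}}^{\star}/{\UJZ}$) kills the boundary ramification. Here I would use the identity \eqref{eqn: UIZstar}: in the irregular case $U(I)_{{\Z}}^{\star}/{\UIZ}=\langle -E_{w}\rangle\simeq {\Z}/2$ is exactly the subgroup responsible for the index-two boundary ramification found in Corollary \ref{cor: irregular 0dim ramify}. Forming $({\D}/U(I)_{{\Z}}^{\star})^{\Sigma_{I}}=\mathcal{X}(I)^{\Sigma_{I}}/\langle -E_{w}\rangle$ therefore absorbs precisely that ramification, so the residual map $({\D}/U(I)_{{\Z}}^{\star})^{\Sigma_{I}}\to {\FGcpt}$, which is the quotient by ${\GIZbar}/\langle -E_{w}\rangle$, is unramified along all boundary divisors (the quotient ${\GIZbar}/\langle -E_{w}\rangle$ now injects into ${\Or}({\UIZ})$, so the argument of $(3)\Rightarrow(2)$ in Proposition \ref{prop: irregular ray} shows no boundary divisor is fixed). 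In the regular case $U(I)_{{\Z}}^{\star}/{\UIZ}$ is either trivial or $\langle -{\id}\rangle$, and Corollary \ref{cor: irregular 0dim ramify} already gives no boundary ramification, so the statement holds trivially. The $1$-dimensional case is identical, using the analogous description of $U(J)_{{\Z}}^{\star}/{\UJZ}$ recorded after the definition of irregular $1$-dimensional cusps.

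The main obstacle I anticipate is not any single hard computation but rather bookkeeping: one must be careful that the quotient-in-stages $\mathcal{X}(I)^{\Sigma_{I}}\to \mathcal{X}(I)^{\Sigma_{I}}/\langle -E_{w}\rangle \to \mathcal{X}(I)^{\Sigma_{I}}/{\GIZbar}$ matches the natural map ${\FGcpt}$ receives, i.e.\ that dividing first by the central involution $-E_{w}$ and then by the rest of ${\GIZbar}$ really does separate out exactly the boundary ramification and leaves nothing behind. This rests on the uniqueness and order-two nature of $-E_{w}$ (Proposition \ref{prop: characterize 0dim irregular} and Proposition \ref{prop: irregular ray}) and on the fact that $\langle -E_{w}\rangle$ is normal (indeed central) in ${\GIZbar}$, so that the intermediate quotient is again a well-defined variety carrying the residual ${\GIZbar}/\langle -E_{w}\rangle$-action. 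Verifying this compatibility cleanly, rather than the ramification indices themselves, is where I would spend the effort.
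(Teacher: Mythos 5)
Your overall architecture (Theorem \ref{thm: AMRT} combined with Corollaries \ref{cor: irregular 0dim ramify} and \ref{cor: irregular 1dim ramify}, plus \eqref{eqn: UIZstar} for the quotient statements) is the same as the paper's, and your handling of the second sentences of (1) and (2) is essentially fine. But your opening reduction has a genuine gap, and it is precisely the point to which the paper devotes its entire written proof: the boundary divisors $D(\sigma_J)$ of $\mathcal{X}(I)^{\Sigma_{I}}$ attached to \emph{isotropic} rays $\sigma_J$. Theorem \ref{thm: AMRT} asserts that $\mathcal{X}(I)^{\Sigma_{I}}/{\GIZbar}\to {\FGcpt}$ is locally isomorphic only near boundary points \emph{lying over the $I$-cusp}; but, as noted just before the proposition, the general points of $D(\sigma_J)$ for isotropic $\sigma_J$ lie over the $1$-dimensional cusp $J$, not over $I$. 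Near such points Theorem \ref{thm: AMRT} gives no control over $\mathcal{X}(I)^{\Sigma_{I}}\to {\FGcpt}$: the local structure of ${\FGcpt}$ there is ${\XJcpt}/{\GJZbar}$, which involves elements of ${\GJZ}$ that do not stabilize $I$. So your claim that ``the ramification of $\mathcal{X}(I)^{\Sigma_{I}}\to {\FGcpt}$ along a boundary divisor is exactly the ramification of the quotient map $\mathcal{X}(I)^{\Sigma_{I}}\to \mathcal{X}(I)^{\Sigma_{I}}/{\GIZbar}$'' is unjustified for these divisors; as written, your argument establishes (1) only for positive-definite rays.

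The paper closes this gap by arguing in the opposite direction: it first proves (2), where Theorem \ref{thm: AMRT} does apply directly (the general points of the unique boundary divisor of ${\XJcpt}$ lie over the $J$-cusp), and then deduces the isotropic-ray case of (1) from (2) by factoring ${\XJcpt}\to \mathcal{X}(I)^{\Sigma_{I}}\to {\FGcpt}$ through the \'etale gluing map \eqref{eqn: glue}, so that ramification along the glued divisors coincides. Proposition \ref{prop: 1dim reduce to 0dim} is then needed to reconcile the two classifications ($J$ irregular if and only if $I$ is irregular and $\sigma_J$ is an irregular ray) --- a citation entirely absent from your proposal, without which the assertion of (1) at isotropic rays cannot even be matched with (2). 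Your reduction could alternatively be repaired directly: $D(\sigma_J)$ does contain points lying over the $I$-cusp (strata of larger cones containing $\sigma_J$), an AMRT-neighborhood of such a point meets $D(\sigma_J)$ in a dense open subset, and ramification along a divisor is detected on any dense open subset of it. But some such argument must be supplied; it is not the formality your proposal takes it to be, whereas the stages-of-quotient bookkeeping you chose to dwell on is comparatively routine.
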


\begin{proof}
What remains is to show that (1) is still true even when a ray $\sigma=\sigma_{J}$ is isotropic. 
Since the map ${\XJcpt}\to {\FGcpt}$ in (2) factorizes as 
${\XJcpt}\to \mathcal{X}(I)^{\Sigma_{I}} \to {\FGcpt}$ 
and the gluing map ${\XJcpt}\to \mathcal{X}(I)^{\Sigma_{I}}$ is etale, 
our assertion for $\mathcal{X}(I)^{\Sigma_{I}} \to {\FGcpt}$ follows from (2) 
and Proposition \ref{prop: 1dim reduce to 0dim}. 
\end{proof}

When ${\G}$ contains $-{\id}$, 
Proposition \ref{prop: boundary ramification divisor} is proved in \cite{GHS07}, \cite{Ma}. 
In that case, we have no irregular cusp, so no ramification divisor in the boundary. 

%In any case,  or $\overline{{\D}/{\UJZZ}}\to {\FGcpt}$
%is not ramified along every boundary divisor. 
%A priori, the assertion (1) applies only to boundary divisors lying over $I$, 
%or equivalently, non-isotropic rays $\sigma$. 
%But since ${\XJcpt}\to \mathcal{X}_{I}^{\Sigma_{I}}$ is unramified for $I\subset J$, 
%the statement is also true for isotropic rays $\sigma$ 
%by the assertion (2).  

\begin{remark}
It appears that in some literatures, 
the ``no ramification boundary divisor'' property is used to claim that 
$\mathcal{F}(\Gamma')^{\Sigma}\to {\FGcpt}$ is not ramified along the boundary divisors 
for neat subgroups $\Gamma' < {\G}$. 
This seems not true already in the case of modular curves: for example, 
$\Gamma(N) < {\rm SL}_{2}({\Z})$. 
The point is that 
$U(I)_{{\Z},{\G}}={\UIQ}\cap {\G}$ depends on ${\G}$, 
so $U(I)_{{\Z},{\G}'}={\UIQ}\cap {\G}'$ is in general smaller than $U(I)_{{\Z},{\G}}$. 
If $\sigma$ is a ray in $\Sigma_{I}$, assumed regular for simplicity, we have ramification index 
\begin{equation*}
[{\R}\sigma\cap U(I)_{{\Z},{\G}} : {\R}\sigma\cap U(I)_{{\Z},{\G}'}] 
\end{equation*}
at the corresponding boundary divisor.  
It seems that so far, all argument using the above claim can be avoided: 
see the proof of Theorem \ref{thm: low slope}. 
\end{remark}

Next we study singularities. 
A fan $\Sigma_{I}=(\sigma_{\alpha})$ is called \textit{basic} with respect to 
a lattice $\Lambda \subset {\UIQ}$ if 
each cone $\sigma_{\alpha}$ is generated by a part of a basis of $\Lambda$. 
The singularity theorem (\cite{GHS07}, \cite{Ma}) is still true, 
if we require the fan $\Sigma_{I}$ to be basic with respect to ${\UIZZ}$, rather than ${\UIZ}$. 

\begin{proposition}[cf.~\cite{GHS07}, \cite{Ma}]\label{thm: singularity}
(1) We choose the fans $\Sigma=(\Sigma_{I})$ so that 
each $\Sigma_{I}$ is basic with respect to ${\UIZZ}$. 
Then ${\FGcpt}$ has canonical singularities 
at the boundary points lying over the $0$-dimensional cusps. 

(2) When $b\geq 9$, ${\FGcpt}$ has canonical singularities 
at the boundary points lying over the $1$-dimensional cusps. 
\end{proposition}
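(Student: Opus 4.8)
The plan is to reduce both statements to the singularity theorem of \cite{GHS07} and \cite{Ma}, which is proved under the hypothesis $-\id\in\G$, by passing from $\G$ to the larger group $\GG$. The three facts that make this work are: $-\id\in\GG$; the translation lattice $\UIQ\cap\GG$ of $\GG$ over $I$ is exactly $\UIZZ$; and, as established in the remark following Theorem \ref{thm: AMRT}, the toroidal compactification $\FGcpt$ coincides (for the same collection $\Sigma$) with the toroidal compactification of $\mathcal{F}(\GG)=\GG\backslash\D$. Since having canonical singularities is a local analytic property, it suffices to verify it at the boundary points over each cusp, where by Theorem \ref{thm: AMRT} the space $\FGcpt$ is locally isomorphic to the corresponding local quotient. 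Note that the adjoint action of $\GIZ$ on $U(I)_{\R}$ agrees with that of $\GIQ\cap\GG$ because $-\id$ is central and acts trivially on $\UIQ$, so the given $\GIZ$-admissible $\Sigma_{I}$ is automatically admissible for $\GG$ as well.

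For part (1), I would fix a $0$-dimensional cusp $I$ and work in the local model $\XIcpt/\GIZbar$. By the identity displayed in the remark after Theorem \ref{thm: AMRT}, this is canonically isomorphic, near the boundary, to $(\XII)^{\Sigma_{I}}/(\Gamma'(I)_{\Z}/\UIZZ)$, which is precisely the local chart of the toroidal compactification of $\mathcal{F}(\GG)$ over $I$. Because $-\id\in\GG$ and, by hypothesis, $\Sigma_{I}$ is basic with respect to the translation lattice $\UIZZ$ of $\GG$, the singularity theorem of \cite{GHS07}, \cite{Ma} applies verbatim to $\GG$ and yields canonical singularities at these boundary points. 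The irregular-cusp phenomenon does not obstruct this: by Corollary \ref{cor: irregular 0dim ramify} the extra element $-E_{w}$ of $\GIZbar$ fixes each irregular boundary divisor and ramifies along it with index $2$, so its only effect is an index-$2$ branching in a single transverse coordinate, which keeps the quotient smooth in that direction. In the $\GG$-picture this branching is absorbed into $-\id$, since $-\id=(-E_{w})\cdot E_{w}^{-1}$ with $E_{w}\in\UIZZ$, so after quotienting by $\UIZZ$ the action of $-E_{w}$ becomes that of $-\id$, exactly the element handled by \cite{GHS07}, \cite{Ma}.

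For part (2), I would argue identically at a $1$-dimensional cusp $J$, using the canonical local model $\XJcpt/\GJZbar$ (here no choice of fan enters). Passing to $\GG$, this is the local chart of the toroidal compactification of $\mathcal{F}(\GG)$ over $J$, with the ramification along the unique boundary divisor governed by Corollary \ref{cor: irregular 1dim ramify} exactly as in part (1). The $1$-dimensional part of the singularity theorem of \cite{GHS07}, \cite{Ma}, whose Reid--Tai estimate for the action of $\Gamma_{J}$ requires $b\geq 9$, then applies to $\GG$ and gives canonical singularities over the $1$-dimensional cusps.

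The main obstacle is not a computation but the bookkeeping that makes this reduction rigorous. One must check that the hypothesis ``$\Sigma_{I}$ basic with respect to $\UIZZ$'' is literally the hypothesis of \cite{GHS07}, \cite{Ma} for the group $\GG$ (whose translation lattice over $I$ is $\UIZZ$), and that the local isomorphisms of Theorem \ref{thm: AMRT} identify the two compactifications in a way that respects the singularity type; in particular, that the boundary ramification at irregular cusps recorded in Proposition \ref{prop: boundary ramification divisor} introduces no worse-than-canonical singularity. Once this matching is verified, both statements follow by citing the established theorem in the case $-\id\in\GG$.
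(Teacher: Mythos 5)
Your proposal is correct and follows essentially the same route as the paper: reduce to the case $-{\rm id}\in{\G}$ by passing to ${\GG}$, use the identity $\mathcal{X}(I)^{\Sigma_{I}}/{\GIZbar} \simeq ({\D}/{\UIZZ})^{\Sigma_{I}}/(\Gamma'(I)_{{\Z}}/{\UIZZ})$ (the paper derives this by quotienting in stages through the translation by ${\UIZZ}/{\UIZ}$, which is exactly your ``absorption of $-E_{w}$ into $-{\rm id}$''), and then apply the singularity theorem of \cite{GHS07}, \cite{Ma} to ${\GG}$, whose translation lattice over $I$ is ${\UIZZ}$, with the $1$-dimensional cusps handled by the same reduction under $b\geq 9$.
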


\begin{proof}
When ${\G}$ contains $-{\id}$, 
this is proved in \cite{GHS07}, \cite{Ma} for $0$-dimensional cusps, 
and in \cite{GHS07} for $1$-dimensional cusps. 
We show that the general case is reduced to this case. 
We consider $0$-dimensional cusps. 
The case of $1$-dimensional cusps is similar. 
It suffices to show that 
${\XI}^{\Sigma_{I}}/{\GIZbar}$ 
has canonical singularities. 

Let $\Gamma' = {\GG}$ and 
$\Gamma'(I)_{{\Z}}=\Gamma' \cap {\GIQ}$. 
Then ${\UIZZ}={\UIQ}\cap \Gamma'$ and $\Gamma'(I)_{{\Z}} = \langle {\GIZ}, -{\id} \rangle$. 
Since the fan $\Sigma_{I}$ is also rational with respect to ${\UIZZ}$, 
it defines a toroidal embedding 
$({\D}/{\UIZZ})^{\Sigma_{I}}$ of ${\D}/{\UIZZ}$. 
This is the quotient of $({\D}/{\UIZ})^{\Sigma_{I}}$ 
by the translation by ${\UIZZ}/{\UIZ}$ 
(which is nontrivial exactly when $I$ is irregular). 
Since ${\UIZZ}/{\UIZ}\subset \Gamma'(I)_{{\Z}}/{\UIZ}$,
we have 
\begin{eqnarray}\label{eqn: /GIZ vs /GIZZ}
({\D}/{\UIZ})^{\Sigma_{I}}/{\GIZbar} 
& = & 
({\D}/{\UIZ})^{\Sigma_{I}}/(\Gamma'(I)_{{\Z}}/{\UIZ}) \\ \nonumber  
& \simeq & 
({\D}/{\UIZZ})^{\Sigma_{I}}/(\Gamma'(I)_{{\Z}}/{\UIZZ}). 
\end{eqnarray}
Since $\Sigma_{I}$ is basic with respect to ${\UIZZ}$ and $-{\id}\in \Gamma'$, 
we can apply the result of \cite{Ma} to the last quotient 
to see that this has canonical singularities. 
\end{proof}

%%%%%
%%Cusp form criterion
%%%%%

\section{Modular forms and pluricanonical forms}\label{sec: cusp form criterion}

Let $L$ be a lattice of signature $(2, b)$ 
and ${\G}$ be a subgroup of ${\OL}$ of finite index. 
For simplicity we assume $b\geq 3$.  
In this section we compare vanishing order of cusp forms and pluricanonical forms, 
and explain how the low weight cusp form trick of Gritsenko-Hulek-Sankaran \cite{GHS07} 
is modified at irregular boundary divisors. 
We take this occasion to generalize ``low weight'' to ``low slope'', 
for possible future use.

\subsection{Modular forms}\label{ssec: modular form}

Let $\mathcal{L}=\mathcal{O}_{{\proj}L_{{\C}}}(-1)|_{{\D}}$ 
be the restriction of the tautological line bundle to ${\D}\subset {\proj}L_{{\C}}$. 
Let $\chi$ be a character of ${\G}$. 
By our assumption $b\geq 3$, 
$\chi({\G})\subset {\C}^{\times}$ is finite (\cite{Margu}). 
We assume that $\chi|_{{\UIZ}}\equiv 1$ for every $0$-dimensional cusp $I$. 
This holds, e.g., for $\chi=1$ and $\chi=\det$. 
A ${\G}$-invariant section of the ${\G}$-linearized line bundle 
$\mathcal{L}^{\otimes k}\otimes \chi$ over ${\D}$ 
is called a \textit{modular form} of weight $k$ and character $\chi$ with respect to ${\G}$. 

Let $I$ be a rank $1$ primitive isotropic sublattice of $L$. 
We choose a generator $l_{I}$ of $I$. 
This defines a frame $s_{I}$ of $\mathcal{L}$ determined by the condition 
$(s_{I}([\omega]), l_{I})=1$, 
where we view 
$s_{I}([\omega])\in \mathcal{L}_{[\omega]}={\C}\omega \subset L_{{\C}}$. 
The factor of automorphy with respect to $s_{I}$ is given by 
\begin{equation*}
j(\gamma, [\omega]) = 
\frac{(\gamma\omega, l_{I})}{(\omega, l_{I})} = 
\frac{(\omega, \gamma^{-1}l_{I})}{(\omega, l_{I})} , \qquad 
\gamma\in {\G}, \; [\omega]\in {\D}. 
\end{equation*}
Let $1_{\chi}$ be a nonzero vector in the representation line of $\chi$. 
Then $s_{I}^{\otimes k}\otimes 1_{\chi}$ is a frame of the line bundle 
$\mathcal{L}^{\otimes k}\otimes \chi$, 
via which modular forms $F=f s_{I}^{\otimes k}\otimes 1_{\chi}$ 
of weight $k$ and character $\chi$ are identified with 
holomorphic functions $f$ on ${\D}$ satisfying 
\begin{equation*}
f(\gamma [\omega]) = \chi(\gamma)j(\gamma, [\omega])^{k}f([\omega]), 
\quad \gamma \in {\G}, \; [\omega]\in {\D}. 
\end{equation*}

Since $s_{I}^{\otimes k}\otimes 1_{\chi}$ is invariant under ${\UIZ}$ by our assumption, 
$f$ is ${\UIZ}$-invariant, hence descends to a function on ${\D}/{\UIZ}$. 
By the tube domain realization ${\D}\to {\D}_{I}\subset U(I)_{{\C}}$ 
(after a choice of $I'\subset L$ with $(I, I')\not\equiv 0$), 
$f$ is identified with a function on ${\D}_{I}$ invariant under translation by the lattice ${\UIZ}$. 
Then it admits a Fourier expansion 
\begin{equation}\label{eqn: Fourier expansion}
f(Z) = \sum_{l\in U(I)_{{\Z}}^{\vee}} a(l) q^{l}, \quad 
q^{l}={\rm exp}(2\pi i(l, Z)), \; Z\in \mathcal{D}_{I}. 
\end{equation}
By the Koecher principle, we have $a(l)\ne 0$ only when $l\in \overline{{\CI}}$. 
The modular form $F$ is called a \textit{cusp form} 
if $a(l)=0$ for every $l\in U(I)_{{\Z}}^{\vee}$ with $(l, l)=0$ at every rank $1$ primitive isotropic sublattice $I$ of $L$. 
($a(0)$ is the value of $f$ at the $0$-dimensional cusp for $I$, and 
$\sum_{\sigma\cap U(I)_{{\Z}}^{\vee}}a(l)q^{l}$ for an isotropic ray $\sigma=\sigma_{J}$ 
gives the restriction of $f$ to the $1$-dimensional cusp for $J\supset I$.)

Fourier expansion at an irregular cusp satisfies the following. 

\begin{lemma}\label{lem: irregular Fourier}
Suppose that $I$ is irregular and $-E_{w}\in {\GIZ}$. 
When the weight $k$ satisfies $\chi(-E_{w})=(-1)^{k+1}$, 
e.g., $k$ odd for $\chi=1$ or $k\not\equiv b$ mod $2$ for $\chi=\det$, 
then we have $a(l)=0$ for $l\in (U(I)_{{\Z}}')^{\vee}$. 
In particular, $a(0)=0$ in this case. 
When $\chi(-E_{w})=(-1)^{k}$, we have $a(l)=0$ for $l\not\in (U(I)_{{\Z}}')^{\vee}$. 
\end{lemma}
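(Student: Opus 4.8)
The plan is to read off the claim from the transformation law of $F$ under the element $-E_{w}\in\GIZ$, whose existence is exactly the irregularity of $I$ (Proposition \ref{prop: characterize 0dim irregular}). First I would determine the action of $-E_{w}$. Since $-{\id}$ acts trivially on $\proj L_{{\C}}$, the element $-E_{w}$ moves points of $\D$ exactly as $E_{w}$ does, and in the tube domain model $\mathcal{D}_{I}\subset\UIQ$ this is the translation $Z\mapsto Z+w$ (its overall sign will be immaterial below). For the automorphy factor, writing $w=m\otimes l_{I}$ and using $(l_{I},l_{I})=0$ together with $\tilde{m}\in I^{\perp}$ in the defining formula for $E_{m\otimes l_{I}}$, one finds $(E_{w}\omega,l_{I})=(\omega,l_{I})$, so $j(E_{w},[\omega])=1$ and hence
\begin{equation*}
j(-E_{w},[\omega])=\frac{(-E_{w}\omega,l_{I})}{(\omega,l_{I})}=-1 .
\end{equation*}

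Feeding $\gamma=-E_{w}$ into $f(\gamma[\omega])=\chi(\gamma)j(\gamma,[\omega])^{k}f([\omega])$ then yields the functional equation $f(Z+w)=\chi(-E_{w})(-1)^{k}f(Z)$ on $\mathcal{D}_{I}$. Substituting the Fourier expansion \eqref{eqn: Fourier expansion} and using $q^{l}(Z+w)=\exp(2\pi i(l,w))\,q^{l}(Z)$, I would compare the coefficient of each $q^{l}$ to obtain, for every $l\in U(I)_{{\Z}}^{\vee}$,
\begin{equation*}
a(l)\bigl(\exp(2\pi i(l,w))-\chi(-E_{w})(-1)^{k}\bigr)=0 .
\end{equation*}

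Next I would pin down the two possible values of $\exp(2\pi i(l,w))$. Since $E_{w}$ generates ${\UIZZ}/{\UIZ}\simeq{\Z}/2$, we have $2w\in\UIZ$ and ${\UIZZ}={\UIZ}+{\Z}w$; hence for $l\in U(I)_{{\Z}}^{\vee}$ the pairing $(l,w)$ lies in $\tfrac12{\Z}$, so $\exp(2\pi i(l,w))=\pm1$, and it equals $1$ precisely when $(l,w)\in{\Z}$, i.e. precisely when $l\in(\UIZZ)^{\vee}$. Plugging in the character value finishes the proof: when $\chi(-E_{w})=(-1)^{k+1}$ the bracketed constant is $\chi(-E_{w})(-1)^{k}=-1$, so $a(l)=0$ whenever $\exp(2\pi i(l,w))=1$, that is whenever $l\in(\UIZZ)^{\vee}$ (in particular $a(0)=0$, since $0\in(\UIZZ)^{\vee}$); when $\chi(-E_{w})=(-1)^{k}$ the constant is $+1$, forcing $a(l)=0$ for $l\notin(\UIZZ)^{\vee}$. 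The parity examples stated in the lemma are then immediate from $\chi(-E_{w})=1$ for $\chi=1$ and $\chi(-E_{w})=\det(-E_{w})=(-1)^{b}$ for $\chi=\det$.

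The argument is largely sign bookkeeping, and the only delicate point is precisely that bookkeeping: confirming $j(-E_{w},\cdot)=-1$ and checking that the translation sign $\exp(2\pi i(l,w))$ is insensitive both to the orientation of the translation (replacing $w$ by $-w$ leaves a value in $\{\pm1\}$ unchanged) and to the representative of $w$ modulo $\UIZ$ (which shifts $(l,w)$ by an integer for $l\in U(I)_{{\Z}}^{\vee}$). The structural input throughout is Proposition \ref{prop: characterize 0dim irregular}, supplying $-E_{w}\in\GIZ$ and the relations $2w\in\UIZ$ and ${\UIZZ}={\UIZ}+{\Z}w$.
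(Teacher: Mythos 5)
Your proposal is correct and follows essentially the same route as the paper: both derive the functional equation $f(Z+w)=\chi(-E_{w})(-1)^{k}f(Z)$ from the transformation law under $-E_{w}$ (whose automorphy factor is $-1$), then compare Fourier coefficients using the dichotomy $(l,w)\in{\Z}$ for $l\in(\UIZZ)^{\vee}$ versus $(l,w)\in\tfrac12+{\Z}$ otherwise. Your write-up merely makes explicit two points the paper leaves as assertions, namely the computation $j(-E_{w},\cdot)=-1$ and the identification of $(\UIZZ)^{\vee}$ via $\UIZZ=\UIZ+{\Z}w$.
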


\begin{proof}
Since the factor of automorphy of $-E_{w}$ is $-1$, 
we find that $f(Z+w)=\chi(-E_{w})(-1)^{k}f(Z)$. 
On the other hand, we have 
$(l, w)\in {\Z}$ if $l\in (U(I)_{{\Z}}')^{\vee}$ and 
$(l, w)\in 1/2+{\Z}$ if $l\in U(I)_{{\Z}}^{\vee}-(U(I)_{{\Z}}')^{\vee}$. 
Therefore, if we substitute $Z\to Z+w$ into $q^{l}={\rm exp}(2\pi i(l, Z))$, 
then $q^{l}\to q^{l}$ if $l\in (U(I)_{{\Z}}')^{\vee}$ 
and $q^{l}\to -q^{l}$ if $l\in U(I)_{{\Z}}^{\vee}-(U(I)_{{\Z}}')^{\vee}$. 
This implies our assertion. 
\end{proof}

\subsection{Vanishing order}\label{ssec: vanishing order}

In this subsection we study vanishing order of modular forms along boundary divisors. 
We will define two types of vanishing order: 
$\nu_{\sigma}(F)$ and $\nu_{\sigma, geom}(F)$. 
$\nu_{\sigma}(F)$ is defined by Fourier expansion and is always an integer. 
On the other hand, $\nu_{\sigma, geom}(F)$ can be strictly half-integral, 
and measures the vanishing order at the level of ${\FGcpt}$. 

Let $I$ be a rank $1$ primitive isotropic sublattice of $L$. 
Let $\Sigma=\Sigma_{I}=(\sigma_{\alpha})$ be a ${\GIZ}$-admissible fan in $U(I)_{{\R}}$ 
and $\sigma$ be a ray in $\Sigma$. 
Let $w_{\sigma}$ be the generator of $\sigma \cap {\UIZ}$. 
Let 
$f(Z)=\sum_{l\in U(I)_{{\Z}}^{\vee}}a(l)q^{l}$ 
be the Fourier expansion of a ${\G}$-modular form 
$F=f s_{I}^{\otimes k}\otimes 1_{\chi}$ around $I$. 
We define the vanishing order of $F$ along $\sigma$ as 
\begin{equation*}\label{eqn: vanishing order}
\nu_{\sigma}(F) = \min \{ \: (l, w_{\sigma}) \: | \: l\in U(I)_{{\Z}}^{\vee}, \: a(l)\ne 0 \: \}. 
\end{equation*}
This is a nonnegative integer. 
Clearly $\nu_{\sigma}(F)$ depends on ${\UIZ}$ and hence on ${\G}$. 
If we shrink ${\G}$ without changing $F$ and $\sigma$, 
then $\nu_{\sigma}(F)$ will be multiplied in general. 

When $\sigma$ is positive-definite, 
we have $\nu_{\sigma}(F)>0$ if and only if $a(0)=0$, 
because $\sigma^{\perp}\cap \overline{{\CI}} = \{ 0 \}$. 
When $\sigma$ is isotropic, 
we have $\nu_{\sigma}(F)>0$ if and only if $a(l)=0$ for all $l\in \sigma\cap U(I)_{{\Z}}^{\vee}$, 
because $\sigma^{\perp}\cap \overline{{\CI}} = \sigma$. 
Thus, $F$ is a cusp form if and only if 
$\nu_{\sigma}(F)>0$ at every ray $\sigma$ at every $0$-dimensional cusp $I$.

The following criterion is trivial but perhaps might be sometimes useful in view of Theorem \ref{thm: low slope}.  
Compare with \cite{AMRT}, \cite{Fr} in related cases. 
%in the case ${\G}={\rm Sp}(2g, {\Z})$.  
 
\begin{corollary}
Assume the following holds: 
if $a(l)\ne 0$, then $(l, w)\geq r$ for every $w\in {\UIZ}\cap \overline{{\CI}}$. 
Then we have $\nu_{\sigma}(F)\geq r$ for every ray $\sigma\in \Sigma$.
\end{corollary}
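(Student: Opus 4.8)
The plan is to reduce the statement to a direct application of the hypothesis, the only point requiring attention being to locate the generator $w_{\sigma}$ inside the cone $\overline{{\CI}}$ on which the hypothesis is assumed.

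First I would recall from \S \ref{ssec: toroidal cpt 0dim} that the support of any ${\GIZ}$-admissible fan $\Sigma$ is the cone ${\CII}={\CI}\cup \bigcup_{w}{\R}_{\geq 0}w$, where $w$ ranges over the isotropic vectors of $L(I)_{{\Q}}$ lying in $\overline{{\CI}}$. In particular ${\CII}\subset \overline{{\CI}}$, so every ray $\sigma\in \Sigma$ is contained in $\overline{{\CI}}$. Consequently the generator $w_{\sigma}$ of $\sigma\cap {\UIZ}$ is an integral vector of $\overline{{\CI}}$, that is, $w_{\sigma}\in {\UIZ}\cap \overline{{\CI}}$. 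I would note that this also covers the case of an isotropic ray $\sigma=\sigma_{J}$, where $w_{\sigma}$ is itself isotropic: the hypothesis is posited for \emph{all} $w\in {\UIZ}\cap \overline{{\CI}}$, hence applies equally to isotropic generators.

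Then I would simply substitute $w=w_{\sigma}$ into the hypothesis: for every $l\in U(I)_{{\Z}}^{\vee}$ with $a(l)\neq 0$ we obtain $(l, w_{\sigma})\geq r$. By the very definition of vanishing order, $\nu_{\sigma}(F)=\min\{(l, w_{\sigma}) \: | \: l\in U(I)_{{\Z}}^{\vee}, \: a(l)\neq 0\}\geq r$, which is the claim.

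There is no genuine obstacle here; as the text remarks, the statement is essentially immediate. The only step that must be verified with any care is the membership $w_{\sigma}\in {\UIZ}\cap \overline{{\CI}}$, since it is precisely what licenses the invocation of the hypothesis at the generator of each ray of $\Sigma$.
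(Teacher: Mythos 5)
Your proof is correct and is essentially the paper's own argument: the paper's proof reads, in its entirety, ``Take $w$ to be the generator of $\sigma \cap {\UIZ}$,'' which is exactly your substitution. Your additional verification that $w_{\sigma}\in {\UIZ}\cap \overline{{\CI}}$ (via the support ${\CII}\subset\overline{{\CI}}$ of the fan) just makes explicit what the paper leaves implicit.
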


\begin{proof}
Take $w$ to be the generator of $\sigma \cap {\UIZ}$. 
\end{proof}

When $\sigma$ is irregular, $\nu_{\sigma}(F)$ belongs to the following parity. 

\begin{proposition}\label{prop: irregular vanishing order}
Suppose that the ray $\sigma$ is irregular and $-E_{w}\in {\GIZ}$. 
Then $\nu_{\sigma}(F)$ is odd when $\chi(-E_{w})=(-1)^{k+1}$, 
and even when $\chi(-E_{w})=(-1)^{k}$. 
\end{proposition}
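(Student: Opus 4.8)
The plan is to leverage the key observation that the irregular element $-E_{w}\in\GIZ$ relates the Fourier coefficients $a(l)$ of $F$ at different points of the lattice $U(I)_{\Z}^{\vee}$ via its factor of automorphy. Recall from the setup of Lemma \ref{lem: irregular Fourier} that the factor of automorphy of $-E_{w}$ is $-1$, and that $-E_{w}$ acts on the tube domain by the translation $Z\mapsto Z+w$ (since $E_{w}$ is the Eichler transvection and $-\id$ contributes trivially on $U(I)_{\Q}$). First I would record the resulting transformation rule for $f$: applying the modularity relation $f(\gamma[\omega])=\chi(\gamma)j(\gamma,[\omega])^{k}f([\omega])$ with $\gamma=-E_{w}$ gives
\begin{equation*}
f(Z+w)=\chi(-E_{w})(-1)^{k}f(Z).
\end{equation*}

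Next I would compare this with the Fourier expansion \eqref{eqn: Fourier expansion}. Substituting $Z\mapsto Z+w$ into $q^{l}=\exp(2\pi i(l,Z))$ multiplies the $l$-th term by $\exp(2\pi i(l,w))$. As established in the proof of Lemma \ref{lem: irregular Fourier}, one has $(l,w)\in\Z$ precisely when $l\in(U(I)_{\Z}')^{\vee}$, and $(l,w)\in \tfrac12+\Z$ when $l\in U(I)_{\Z}^{\vee}-(U(I)_{\Z}')^{\vee}$; thus $q^{l}\mapsto q^{l}$ in the first case and $q^{l}\mapsto -q^{l}$ in the second. Comparing coefficients against the scalar $\chi(-E_{w})(-1)^{k}$ forces $a(l)=0$ unless the sign $(-1)^{(l,w)\cdot 2}$—that is, $+1$ or $-1$ according to whether $(l,w)$ is integral or half-integral—matches $\chi(-E_{w})(-1)^{k}$. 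Concretely, this is exactly the conclusion of Lemma \ref{lem: irregular Fourier}: when $\chi(-E_{w})=(-1)^{k+1}$ the surviving coefficients have $l\in U(I)_{\Z}^{\vee}-(U(I)_{\Z}')^{\vee}$ (half-integral pairing with $w$), and when $\chi(-E_{w})=(-1)^{k}$ they have $l\in(U(I)_{\Z}')^{\vee}$ (integral pairing).

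The final step is to translate this into the parity of $\nu_{\sigma}(F)=\min\{(l,w_{\sigma}):a(l)\neq 0\}$, where $w_{\sigma}$ generates $\sigma\cap\UIZ$. The crucial input is that $\sigma$ is an irregular ray, so by Proposition \ref{prop: irregular ray}(1) we have $\sigma\cap\UIZ\neq\sigma\cap\UIZZ$; since $\UIZZ/\UIZ\simeq\Z/2$ is generated by the class of $w$, this means $w_{\sigma}=\tfrac12 w_{\sigma}'$ where $w_{\sigma}'$ generates $\sigma\cap\UIZZ$, or equivalently the generator of $\sigma\cap\UIZ$ pairs with a coweight in a way that detects whether $l$ lies in $(U(I)_{\Z}')^{\vee}$. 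I would make this precise by checking that $(l,w)$ and $(l,w_{\sigma})$ have the same parity relationship along the irregular ray: because $w_{\sigma}$ is the primitive generator in $\UIZ$ and $w$ is the half-lattice translation, $(l,w_{\sigma})$ is even exactly when $(l,w)\in\Z$, i.e. when $l\in(U(I)_{\Z}')^{\vee}$. Combining with the coefficient vanishing from the previous step yields that the minimizing $l$ has $(l,w_{\sigma})$ odd when $\chi(-E_{w})=(-1)^{k+1}$ and even when $\chi(-E_{w})=(-1)^{k}$.

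The step I expect to be the main obstacle is the last one: carefully verifying the arithmetic linking the parity of $(l,w_{\sigma})$ to membership of $l$ in $(U(I)_{\Z}')^{\vee}$. This requires pinning down the precise relation between $w$ (the half-integral translation vector generating $\UIZZ/\UIZ$ along $\sigma$) and $w_{\sigma}$ (the primitive integral generator of $\sigma\cap\UIZ$), and confirming that the duality pairing between $\UIZ$ and $U(I)_{\Z}^{\vee}$ is compatible with that between $\UIZZ$ and $(U(I)_{\Z}')^{\vee}$ in the manner the conclusion demands. Everything else is a direct transcription of Lemma \ref{lem: irregular Fourier} combined with the definition of $\nu_{\sigma}$.
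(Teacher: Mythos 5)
Your proposal is correct and takes essentially the same route as the paper: the paper's proof consists precisely of noting that irregularity of $\sigma$ forces ${\UIZZ}=\langle {\UIZ}, w_{\sigma}/2 \rangle$, so that $l\in (U(I)_{{\Z}}')^{\vee}$ if and only if $(l, w_{\sigma})$ is even, and then invoking Lemma \ref{lem: irregular Fourier}. One small slip in your bookkeeping: since ${\UIZ}\subset{\UIZZ}$, the generator of $\sigma\cap{\UIZZ}$ is $w_{\sigma}'=\tfrac{1}{2}w_{\sigma}$ (not $w_{\sigma}=\tfrac{1}{2}w_{\sigma}'$ as you wrote), but this is harmless because the parity criterion you subsequently state and use is the correct one.
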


\begin{proof}
Let $w_{\sigma}$ be the generator of $\sigma\cap {\UIZ}$. 
%and choose $l_{\sigma}\in U(I)_{{\Z}}^{\vee}$ with $(v_{\sigma}, l_{\sigma})=1$. 
%Then $U(I)_{{\Z}}^{\vee}/(U(I)_{{\Z}}')^{\vee}\simeq {\Z}/2$ is generated by $l_{\sigma}$. 
%This tells us that 
Since ${\UIZZ}=\langle {\UIZ}, w_{\sigma}/2 \rangle$, 
a vector $l$ of $U(I)_{{\Z}}^{\vee}$ belongs to $(U(I)_{{\Z}}')^{\vee}$ 
if and only if $(l, w_{\sigma})$ is even. 
Then our assertion follows from Lemma \ref{lem: irregular Fourier}. 
\end{proof}

We also define the geometric vanishing order of $F$ along $\sigma$ as 
\begin{equation*}\label{eqn: geom vanishing order}
\nu_{\sigma, geom}(F) = 
\begin{cases} 
\nu_{\sigma}(F) & \sigma :  \textrm{regular} \\ 
\frac{1}{2}\nu_{\sigma}(F) & \sigma :  \textrm{irregular} 
\end{cases} 
\end{equation*}
If $w_{\sigma}'$ is the generator of $\sigma \cap {\UIZZ}$, 
we can write uniformly as 
\begin{equation}\label{eqn: vanishing order}
\nu_{\sigma, geom}(F) = \min \{ \: (l, w_{\sigma}') \: | \: l\in U(I)_{{\Z}}^{\vee}, \: a(l)\ne 0 \: \}. 
\end{equation}
Note that $\nu_{\sigma, geom}(F)$ is in $1/2+{\Z}$ when $\sigma$ is irregular and 
the weight $k$ satisfies $\chi(-E_{w})=(-1)^{k+1}$ so that $\nu_{\sigma}(F)$ is odd. 

Geometric interpretation of $\nu_{\sigma}(F)$ is as follows. 
Recall that the ray $\sigma$ corresponds to a boundary divisor $D(\sigma)$ of the partial compactification 
${\XIcpt}$ of ${\XI}={\D}/{\UIZ}$. 
The line bundle $\mathcal{L}^{\otimes k} \otimes \chi$ descends to a line bundle over ${\XI}$, 
again denoted by $\mathcal{L}^{\otimes k} \otimes \chi$. 
The point is that, since $s_{I}^{\otimes k}\otimes 1_{\chi}$ is ${\UIZ}$-invariant, 
it descends to a frame of $\mathcal{L}^{\otimes k} \otimes \chi$ over ${\XI}$,  
and we use this frame to extend $\mathcal{L}^{\otimes k} \otimes \chi$ 
to a line bundle over ${\XIcpt}$, still denoted by the same notation. 
Namely, $s_{I}^{\otimes k}\otimes 1_{\chi}$ extends to a frame of the extended line bundle by definition. 
The property $l \in \overline{{\CI}}=\overline{{\CI}}^{\vee}$ in the Fourier expansion  
implies that a modular form $F$ extends holomorphically over ${\XIcpt}$ 
as a section of $\mathcal{L}^{\otimes k}\otimes \chi$. 

\begin{proposition}\label{prop: geom interpret nu(F)}
$\nu_{\sigma}(F)$ is equal to the vanishing order of $F$ as a section of 
$\mathcal{L}^{\otimes k} \otimes \chi$ over ${\XIcpt}$ along the boundary divisor $D(\sigma)$. 
\end{proposition}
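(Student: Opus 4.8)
The plan is to reduce the divisorial vanishing order to the lowest Fourier exponent by using the explicit toric description of $\XIcpt$ near $D(\sigma)$. By construction the frame $s_{I}^{\otimes k}\otimes 1_{\chi}$ extends to a frame of $\mathcal{L}^{\otimes k}\otimes \chi$ on a neighborhood of $D(\sigma)$ in $\XIcpt$, and $F=f\cdot(s_{I}^{\otimes k}\otimes 1_{\chi})$; hence the vanishing order of $F$ as a section along $D(\sigma)$ equals the vanishing order of the holomorphic function $f$ along $D(\sigma)$, and it suffices to compute the latter. Since $f$ extends holomorphically over $\XIcpt$ (as recorded after \eqref{eqn: Fourier expansion}), I may read off this order in the toric coordinates of $T(I)^{\sigma}$.

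First I would fix local toric coordinates. Let $w_{\sigma}$ be the generator of $\sigma\cap \UIZ$. Because the quadratic form on $\UIQ$ gives a perfect pairing between $\UIZ$ and $U(I)_{\Z}^{\vee}$ and $w_{\sigma}$ is primitive, I can choose $m_{0}\in U(I)_{\Z}^{\vee}$ with $(m_{0}, w_{\sigma})=1$. In the torus embedding $T(I)\hookrightarrow T(I)^{\sigma}\simeq \C\times(\C^{\times})^{b-1}$, the monomial $t=q^{m_{0}}$ then cuts out $D(\sigma)$ (indeed $|q^{m_{0}}|\to 0$ as $\mathrm{Im}(Z)\to\infty$ along $w_{\sigma}$), while the characters $q^{l'}$ with $l'\in \sigma^{\perp}\cap U(I)_{\Z}^{\vee}$ restrict to coordinates of the boundary torus $D(\sigma)$. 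For arbitrary $l\in U(I)_{\Z}^{\vee}$ I would factor $q^{l}=t^{(l,w_{\sigma})}\cdot q^{\,l-(l,w_{\sigma})m_{0}}$, where the second factor lies in $\sigma^{\perp}\cap U(I)_{\Z}^{\vee}$ and hence is a nonvanishing function on $D(\sigma)$; thus $q^{l}$ vanishes to order exactly $(l,w_{\sigma})$ along $D(\sigma)$. By the Koecher principle $a(l)\ne 0$ only for $l\in \overline{\CI}$, and since $w_{\sigma}\in \CII\subset \overline{\CI}$ and $\overline{\CI}=\overline{\CI}^{\vee}$, all these orders are $\geq 0$, so $\nu_{\sigma}(F)=\min\{(l,w_{\sigma}) : a(l)\ne 0\}$ is a well-defined nonnegative integer attained by the series.

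Grouping the expansion by the value $n=(l,w_{\sigma})$ gives
\[
f=\sum_{n\geq 0} t^{\,n}\Big(\sum_{(l,w_{\sigma})=n}a(l)\,q^{\,l-n m_{0}}\Big).
\]
To see that the order is \emph{exactly} $\nu_{\sigma}(F)$ I would show the coefficient of $t^{\nu_{\sigma}(F)}$ is not the zero function on $D(\sigma)$. This coefficient is itself an expansion in the characters $q^{\,l-nm_{0}}$ of the boundary torus, whose character lattice is $\sigma^{\perp}\cap U(I)_{\Z}^{\vee}$; distinct $l,l'$ with $(l,w_{\sigma})=(l',w_{\sigma})=\nu_{\sigma}(F)$ give distinct characters, since $l-l'\in \sigma^{\perp}$ is nonzero so $q^{l-l'}$ is nontrivial. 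As the character coefficients of this (convergent) expansion are precisely the $a(l)$ with $(l,w_{\sigma})=\nu_{\sigma}(F)$, and at least one is nonzero by the definition of the minimum, the coefficient of $t^{\nu_{\sigma}(F)}$ is a nonzero function on $D(\sigma)$. Hence $f$, and therefore $F$, vanishes to order precisely $\nu_{\sigma}(F)$ along $D(\sigma)$.

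The step I expect to be the main obstacle is exactly this last non-cancellation point: one must rule out that the minimal-exponent terms sum to the zero function on $D(\sigma)$. This is clean when $\sigma$ is positive-definite (finitely many $l$ in each slice, so ordinary linear independence of distinct torus characters suffices), but when $\sigma=\sigma_{J}$ is isotropic there may be infinitely many $l$ with a fixed $(l,w_{\sigma})$, and one must instead invoke uniqueness of coefficients in a convergent character expansion. The remaining ingredients --- identifying $t$ as a local equation of $D(\sigma)$ and the holomorphic extendability of $f$ --- are standard toric geometry already built into the construction of $\XIcpt$.
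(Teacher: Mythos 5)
Your proposal is correct and follows essentially the same route as the paper's proof: choosing an auxiliary vector $l_{\sigma}\in U(I)_{\Z}^{\vee}$ with $(l_{\sigma},w_{\sigma})=1$ (your $m_{0}$), using $q^{l_{\sigma}}$ as a local equation for $D(\sigma)$, and regrouping the Fourier expansion as a Taylor expansion along the boundary divisor with coefficients that are character sums on the boundary torus. Your explicit treatment of the non-cancellation step (uniqueness of coefficients in a convergent character expansion, needed especially when $\sigma$ is isotropic) is a point the paper leaves implicit, but it is the same argument.
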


\begin{proof}
Recall that $\sigma$ defines a sub toroidal embedding 
$\mathcal{X}(I)^{\sigma}\subset {\XIcpt}$, 
the unique boundary divisor of which is a Zariski open set of $D(\sigma)$ 
and is the quotient torus (or its analytic open set) defined by the quotient lattice 
${\UIZ}/{\Z}w_{\sigma}$. 
The character group of this boundary torus is $\sigma^{\perp}\cap U(I)_{{\Z}}^{\vee}$. 
We choose a vector $l_{\sigma} \in U(I)_{{\Z}}^{\vee}$ 
such that $(l_{\sigma}, w_{\sigma})=1$ and put 
$q=q^{l_{\sigma}}$, which is a character of $T(I)$. 
Then $q$ extends holomorphically over $\mathcal{X}(I)^{\sigma}$ 
with $D(\sigma)=(q=0)$. 
The Fourier expansion \eqref{eqn: Fourier expansion} can be arranged as 
$f=\sum_{m\geq 0}\varphi_{m}q^{m}$ where 
\begin{equation*}
\varphi_{m} = 
\sum_{l\in \sigma^{\perp}\cap U(I)_{{\Z}}^{\vee}} 
a(l+ml_{\sigma})q^{l}.  
\end{equation*}
%is a function on $D(\sigma)$. 
This is a Taylor expansion of $f$ along the divisor $D(\sigma)$. 
Since 
$(l+ml_{\sigma}, w_{\sigma})=m$ for 
$l\in \sigma^{\perp}\cap U(I)_{{\Z}}^{\vee}$, 
we find that 
\begin{equation*}
\nu_{\sigma}(F) = \min \{ \: m \: | \: \varphi_{m}\not\equiv 0 \: \}. 
\end{equation*}
This proves our assertion. 
\end{proof}

We can also give a geometric interpretation of $\nu_{\sigma, geom}(F)$ when 
\begin{equation}\label{eqn: UIZstar invariance}
s_{I}^{\otimes k}\otimes 1_{\chi} \; \textrm{is invariant under} \; 
U(I)_{{\Z}}^{\star} = (\{ \pm{\id} \} \cdot {\UIQ})\cap {\G}. 
\end{equation}
This holds, %when $s_{I}^{\otimes k}\otimes 1_{\chi}$ is invariant under $-{\id}$, 
e.g., when $k$ is even with $\chi=1$ and when $k\equiv b$ mod $2$ with $\chi=\det$. 
Recall that ${\UIZZ}$ is the image of $U(I)_{{\Z}}^{\star}$ in ${\UIQ}$. 
Under the condition \eqref{eqn: UIZstar invariance}, 
the function $f(Z)$ on the tube domain $\mathcal{D}_{I}$ is invariant under translation by ${\UIZZ}$, 
so the index lattice in the Fourier expansion reduces to 
$({\UIZZ})^{\vee}\subset U(I)_{{\Z}}^{\vee}$. 
In other words, $a(l)=0$ if $l\not\in ({\UIZZ})^{\vee}$,  
so $\nu_{\sigma, geom}(F)$ is an integer. 
The frame $s_{I}^{\otimes k}\otimes 1_{\chi}$ descends to 
a frame of $\mathcal{L}^{\otimes k}\otimes \chi$ over 
\begin{equation*}
{\XII} = {\D}/U(I)_{{\Z}}^{\star} = {\D}/{\UIZZ}, 
\end{equation*}
using which we can extend $\mathcal{L}^{\otimes k}\otimes \chi$ 
to a line bundle over ${\XIIcpt}$. 
The ray $\sigma$ corresponds to a boundary divisor 
$D(\sigma)'$ of ${\XIIcpt}$. 
We have 
\begin{itemize}
\item $D(\sigma)'=D(\sigma)$ in ${\XIcpt}={\XIIcpt}$ 
when $I$ is regular. 
\item $D(\sigma)'\simeq D(\sigma)$ with ${\XIcpt}\to {\XIIcpt}$ doubly ramified along $D(\sigma)'$ 
when $\sigma$ is irregular. 
\item $D(\sigma)'$ is the quotient of $D(\sigma)$ by ${\UIZZ}/{\UIZ}\simeq {\Z}/2$ with 
${\XIcpt}\to {\XIIcpt}$ unramified along $D(\sigma)'$ 
when $I$ is irregular but $\sigma$ is regular. 
\end{itemize} 
Then we see, 
either from Proposition \ref{prop: geom interpret nu(F)} or by a similar argument, 
the following. 

\begin{proposition}\label{prop: geom interpret ord geom}
When \eqref{eqn: UIZstar invariance} holds, 
$\nu_{\sigma, geom}(F)$ is equal to the vanishing order of $F$ 
as a section of $\mathcal{L}^{\otimes k}\otimes \chi$ 
over ${\XIIcpt}$ along the boundary divisor $D(\sigma)'$. 
\end{proposition}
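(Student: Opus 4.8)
The plan is to deduce the statement from Proposition~\ref{prop: geom interpret nu(F)} by comparing vanishing orders across the natural finite map $\pi\colon {\XIcpt}\to {\XIIcpt}$ induced by the inclusion ${\UIZ}\subset {\UIZZ}$. I would first record the behaviour of $\pi$ along $D(\sigma)'$, reading it off from the three cases listed just before the proposition: $\pi$ is the identity when $I$ is regular, is unramified along $D(\sigma)'$ when $I$ is irregular but $\sigma$ is regular, and is doubly ramified along $D(\sigma)'$ when $\sigma$ is irregular. In all cases the ramification index $e$ of $\pi$ along $D(\sigma)'$ equals $1$ when $\sigma$ is regular and $2$ when $\sigma$ is irregular.

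Next I would check that, under the invariance hypothesis \eqref{eqn: UIZstar invariance}, the section $F$ on ${\XIcpt}$ is the pullback $\pi^{\ast}F'$ of the corresponding section $F'$ on ${\XIIcpt}$. The point is that $s_{I}^{\otimes k}\otimes 1_{\chi}$, being invariant under $U(I)_{\Z}^{\star}$, descends to a frame of $\mathcal{L}^{\otimes k}\otimes \chi$ on ${\XII}={\D}/{\UIZZ}$, while over the interior $\pi$ is exactly the quotient by ${\UIZZ}/{\UIZ}$, under which this frame is fixed. Hence the frame used to extend the line bundle over ${\XIcpt}$ is the $\pi$-pullback of the one used over ${\XIIcpt}$, and the extended bundles and their sections match under $\pi^{\ast}$.

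Granting this, the vanishing orders are related by the ramification index,
\begin{equation*}
\nu_{D(\sigma)}(F) = e\cdot \nu_{D(\sigma)'}(F'),
\end{equation*}
and since $\nu_{D(\sigma)}(F)=\nu_{\sigma}(F)$ by Proposition~\ref{prop: geom interpret nu(F)}, I would conclude $\nu_{D(\sigma)'}(F')=\nu_{\sigma}(F)/e$. By the definition of the geometric vanishing order this equals $\nu_{\sigma}(F)$ for $\sigma$ regular and $\tfrac{1}{2}\nu_{\sigma}(F)$ for $\sigma$ irregular, that is, $\nu_{\sigma,geom}(F)$, as desired.

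As an alternative I would simply repeat the proof of Proposition~\ref{prop: geom interpret nu(F)} verbatim with ${\UIZZ}$ in place of ${\UIZ}$: under \eqref{eqn: UIZstar invariance} the Fourier index set shrinks to $({\UIZZ})^{\vee}$, and choosing $l_{\sigma}'\in ({\UIZZ})^{\vee}$ with $(l_{\sigma}', w_{\sigma}')=1$ one expands $f$ in the transverse coordinate $q^{l_{\sigma}'}$ cutting out $D(\sigma)'$ and reads off the order as $\min\{(l,w_{\sigma}')\mid a(l)\neq 0\}$, matching \eqref{eqn: vanishing order}. In either route I expect the only genuinely delicate step to be the frame bookkeeping across $\pi$ (equivalently, the descent to ${\XII}$): one must verify that \eqref{eqn: UIZstar invariance} is exactly the condition making $s_{I}^{\otimes k}\otimes 1_{\chi}$ descend, so that $F$ defines an honest section on ${\XIIcpt}$ whose order along $D(\sigma)'$ is what \eqref{eqn: vanishing order} measures.
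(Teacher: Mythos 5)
Your proposal is correct and is essentially the paper's own argument: the paper proves this proposition with the single remark that it follows ``either from Proposition \ref{prop: geom interpret nu(F)} or by a similar argument,'' relying on the immediately preceding discussion (the descent of the frame $s_{I}^{\otimes k}\otimes 1_{\chi}$ under \eqref{eqn: UIZstar invariance}, the reduction of the Fourier index lattice to $({\UIZZ})^{\vee}$, and the three-case description of $D(\sigma)'$ versus $D(\sigma)$), which is exactly the bookkeeping you carry out. Both of your routes --- comparing orders across ${\XIcpt}\to{\XIIcpt}$ via the ramification index, and rerunning the expansion argument with ${\UIZZ}$ in place of ${\UIZ}$ --- are the two alternatives the paper alludes to, and both are executed correctly.
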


Vanishing order at a $1$-dimensional cusp $J$ 
is reduced to the case considered above. 
We choose a rank $1$ primitive sublattice $I\subset J$ and let 
$\sigma_{J}$ be the isotropic ray in $U(I)_{{\R}}$ corresponding to $J$. 
Then we define 
\begin{equation*}
\nu_{J}(F) = \nu_{\sigma_{J}}(F), \qquad 
\nu_{J, geom}(F) = \nu_{\sigma_{J}, geom}(F). 
\end{equation*}
The Taylor expansion $f=\sum_{m}\varphi_{m}q^{m}$ 
in this case is nothing but the Fourier-Jacobi expansion, 
and $\varphi_{m}$ is essentially the $m$-th Fourier-Jacobi coefficient. 
Thus $\nu_{J}(F)$ is the minimal degree of nonzero Fourier-Jacobi coefficients. 

We also have the following geometric interpretation of $\nu_{J}(F)$. 
We use the ${\UJZ}$-invariant frame $s_{I}^{\otimes k}\otimes 1_{\chi}$ 
to extend $\mathcal{L}^{\otimes k}\otimes \chi$ to a line bundle  
%over ${\XJ}={\D}/{\UJZ}$ to a line bundle 
over ${\XJcpt}$. 
This is the pullback of the extended line bundle $\mathcal{L}^{\otimes k}\otimes \chi$ over ${\XIcpt}$ 
by the etale gluing map ${\XJcpt}\to {\XIcpt}$. 
This extension does not depend on the choice of $I$ up to isomorphism. 
Then $\nu_{J}(F)$ is the vanishing order of $F$ as a section of the extended line bundle 
$\mathcal{L}^{\otimes k}\otimes \chi$ over ${\XJcpt}$ along the boundary divisor. 
Similarly, when $s_{I}^{\otimes k}\otimes 1_{\chi}$ is invariant under 
$U(J)_{{\Z}}^{\star}$, 
$\nu_{J,geom}(F)$ equals to the vanishing order of $F$ 
along the boundary divisor of 
$\overline{{\D}/U(J)_{{\Z}}^{\star}}=\overline{{\D}/{\UJZZ}}$.

\subsection{Pluricanonical forms}\label{ssec: pluricanonical form}

In this subsection 
we compare vanishing order of modular forms and pluricanonical forms along the boundary divisors. 
Recall that we have a canonical isomorphism 
\begin{equation*}
\mathcal{L}^{\otimes b}\otimes \det \simeq K_{\mathcal{D}} 
\end{equation*} 
over ${\D}$, 
as a consequence of the isomorphism 
$K_{{\proj}L_{{\C}}}\simeq \mathcal{O}_{{\proj}L_{{\C}}}(-b-2)\otimes \det$ 
and the adjunction formula. 
Let $I$ be a rank $1$ primitive isotropic sublattice of $L$. 
The above isomorphism descends to 
$\mathcal{L}^{\otimes b}\otimes \det \simeq K_{{\XII}}$ 
over ${\XII}={\D}/U(I)_{{\Z}}^{\star}$. 
Both line bundles are extended over the partial compactification  
${\XIIcpt}$ in the respective manner: 
$\mathcal{L}^{\otimes b}\otimes \det$ is extended by the frame $s_{I}^{\otimes b}\otimes 1_{\det}$, 
while $K_{{\XII}}$ is extended to $K_{{\XIIcpt}}$. 

\begin{proposition}[cf.~\cite{Mu}]\label{prop: K vs L}
Over ${\XIIcpt}$ 
the above isomorphism extends to  
\begin{equation*}
\mathcal{L}^{\otimes b}\otimes \det \: \simeq \: 
K_{{\XIIcpt}} ( \sum_{\sigma}D(\sigma)' ), 
\end{equation*}
where $\sigma$ ranges over all rays in $\Sigma$ and 
$D(\sigma)'$ is the boundary divisor of ${\XIIcpt}$ corresponding to $\sigma$. 
\end{proposition}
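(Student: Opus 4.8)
The plan is to reduce everything to a computation at the generic point of each boundary divisor, after descending the canonical isomorphism from ${\D}$ to ${\XII}$. Since $U(I)_{{\Z}}^{\star}$ acts on ${\D}$ through translations by the lattice ${\UIZZ}$, the quotient map ${\D}\to {\XII}$ is etale, so the $\OL$-equivariant isomorphism $\mathcal{L}^{\otimes b}\otimes \det \simeq K_{{\D}}$ descends to the given isomorphism $\mathcal{L}^{\otimes b}\otimes \det \simeq K_{{\XII}}$ over ${\XII}$; this is what we must extend across the boundary. Fixing a tube domain realization ${\D}\simeq \mathcal{D}_{I}\subset L(I)_{{\C}}=U(I)_{{\C}}$ with linear coordinate $Z$, the form $dZ=dz_{1}\wedge \cdots \wedge dz_{b}$ is invariant under ${\UIZZ}$ and hence descends to a frame of $K_{{\XII}}$ on the open torus. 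In the frames $s_{I}^{\otimes b}\otimes 1_{\det}$ and $dZ$ the canonical isomorphism is multiplication by a nowhere-vanishing holomorphic function on ${\XII}$; since both frames have the same automorphy factor $j(\gamma,\cdot)^{b}\det(\gamma)$, this function is ${\GIZ}$-invariant, i.e. a weight $0$ modular form, hence a nonzero constant by the Koecher principle. This is the standard identification of pluricanonical forms with modular forms of weight $b$ and character $\det$.

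It then remains to compute the divisor of $dZ$, viewed as a rational section of $K_{{\XIIcpt}}$, along the boundary. Let $\sigma$ be a ray of $\Sigma$. At the generic point of $D(\sigma)'$ the torus embedding is the smooth local model $\C\times (\C^{\times})^{b-1}$, with a coordinate $q=q^{l_{\sigma}}$ cutting out $D(\sigma)'$ and torus coordinates $u_{2},\dots,u_{b}$ along it. Writing $q_{j}=\exp(2\pi i(l_{j},Z))$ for a basis $l_{1},\dots,l_{b}$ of $(\UIZZ)^{\vee}$ adapted to $\sigma$, the relation $\tfrac{dq_{j}}{q_{j}}=2\pi i\,(l_{j},dZ)$ gives $dZ = c\cdot \tfrac{dq_{1}}{q_{1}}\wedge \cdots \wedge \tfrac{dq_{b}}{q_{b}}$ for a nonzero constant $c$. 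The invariant logarithmic form on the right has exactly a first-order pole along each boundary divisor and is nonzero elsewhere; concretely $q\cdot dZ$ is holomorphic and nowhere vanishing at the generic point of $D(\sigma)'$. Hence $dZ$, and therefore $s_{I}^{\otimes b}\otimes 1_{\det}$, is a local frame of $K_{{\XIIcpt}}(\sum_{\sigma}D(\sigma)')$ there, which is precisely the assertion in codimension one.

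Finally I would upgrade this to an isomorphism of sheaves on all of ${\XIIcpt}$: the two sides are already identified over the dense open ${\XII}$, they agree at the generic point of every boundary divisor by the computation above, and ${\XIIcpt}$ is normal, so the isomorphism extends uniquely across the boundary (the locus where the torus embedding may fail to be smooth has codimension $\geq 2$, and both sheaves are reflexive). The main obstacle is the boundary computation of the second paragraph: verifying that the invariant canonical form $dZ$ matches the toric logarithmic form and therefore acquires a simple pole with coefficient exactly $1$, uniformly for rays of every type (positive-definite or isotropic). This log-canonical triviality of the torus embedding, together with the correct use of ${\UIZZ}$ (rather than ${\UIZ}$) as the lattice defining the torus $T(I)'$ and its dual in the character pairing, is the crux of the argument (cf.~\cite{Mu}).
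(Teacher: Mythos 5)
Your proof follows the same two-step route as the paper's: first identify the image of the frame $s_{I}^{\otimes b}\otimes 1_{\det}$ under the canonical isomorphism with the translation-invariant form $dZ$ up to a nonzero constant, then compute the divisor of $dZ$ in coordinates adapted to each ray. Your boundary computation and the normality/codimension-two remark are correct and agree with the paper (which uses the mixed coordinates $q, Z_{2}, \dots, Z_{b}$ and the identity $dZ_{1}\wedge \cdots \wedge dZ_{b}=\frac{dq}{q}\wedge dZ_{2}\wedge \cdots \wedge dZ_{b}$). The gap is in the first step: you deduce constancy of the ratio of the two frames from its ${\GIZ}$-invariance ``by the Koecher principle''. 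The Koecher principle gives constancy for holomorphic functions invariant under the \emph{full} arithmetic group ${\G}$ (Fourier support in the cone, extension over the compact Baily--Borel compactification, maximum principle); it gives no such conclusion for functions invariant only under the cusp stabilizer ${\GIZ}$, and indeed cusp-stabilizer invariance does not force constancy. For suitably divisible $l_{0}\in U(I)_{{\Z}}^{\vee}\cap {\CI}$, the orbit series $f(Z)=\sum_{l}q^{l}$, summed over the orbit of $l_{0}$ under the image of ${\GIZbar}$ in ${\Or}({\UIZ})$, converges locally uniformly on the tube domain (polynomial orbit growth against exponential decay), and $e^{f}$ is then a nonconstant, nowhere vanishing, ${\GIZ}$-invariant holomorphic function. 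So the step as written fails, and constancy of the ratio is exactly what your divisor computation rests on.

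The repair is short, and there are two natural ones. Either observe that the matching of the automorphy factor of $s_{I}^{\otimes b}\otimes 1_{\det}$ with the Jacobian factor of $dZ$ --- the ``standard identification'' you cite --- holds for every $\gamma\in{\Or}(L_{{\R}})$, not just for $\gamma\in{\GIZ}$; the ratio is then invariant under a group acting transitively on ${\D}$, hence constant (invariance under ${\G}$ together with the genuine Koecher principle would also suffice). Or argue as the paper does, purely locally at the cusp: the canonical isomorphism, the frame $s_{I}^{\otimes b}\otimes 1_{\det}$, and the flat form all extend over ${\DI}\simeq U(I)_{{\C}}$ and are invariant under the full complex translation group $U(I)_{{\C}}$, so the ratio is a translation-invariant function on affine space, hence constant. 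With either fix, the rest of your argument goes through verbatim.
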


\begin{proof}
By the isomorphism 
$\mathcal{L}^{\otimes b}\otimes \det \simeq K_{{\D}}$, 
the frame $s_{I}^{\otimes b}\otimes 1_{\det}$ of $\mathcal{L}^{\otimes b}\otimes \det$ 
corresponds to a flat canonical form $\omega_{I}$ on the tube domain 
${\D}_{I}\subset U(I)_{{\C}}$, 
because both extend over ${\D}(I)\simeq U(I)_{{\C}}$ and are $U(I)_{{\C}}$-invariant. 
Let $\sigma$ be a ray in $\Sigma$ and  
$w_{\sigma}'$ be the generator of $\sigma\cap {\UIZZ}$. 
We take a vector $l_{\sigma}\in ({\UIZZ})^{\vee}$ with $(l_{\sigma}, w_{\sigma}')=1$ 
and extend it to a basis of $({\UIZZ})^{\vee}$. 
This defines a coordinate $Z_{1}=(l_{\sigma}, \cdot), Z_{2}, \cdots , Z_{b}$ on $U(I)_{{\C}}$. 
We have $\omega_{I}=dZ_{1}\wedge \cdots \wedge dZ_{b}$ up to constant. 
Then $q=q^{l_{\sigma}}, Z_{2}, \cdots , Z_{b}$ define a local coordinate around 
a point of $D(\sigma)'\subset {\XIIcpt}$ with 
$D(\sigma)'=(q=0)$. 
Since we have 
\begin{equation*}
s_{I}^{\otimes b}\otimes 1_{\det} = 
dZ_{1} \wedge \cdots \wedge dZ_{b}  = 
\frac{dq}{q}\wedge dZ_{2} \wedge \cdots \wedge dZ_{b} 
\end{equation*}
around a point of $D(\sigma)'$,  
this proves our assertion. 
\end{proof}

This is the situation at a local chart for the boundary. 
We pass to the global situation. 

\begin{proposition}\label{prop: vanishing order relation}
Let $F$ be a modular form of weight $mb$ and character $\det^{m}$ with respect to ${\G}$ 
and $\omega_{F}$ be the corresponding rational $m$-canonical form on ${\FGcpt}$. 
Let $I$ be a $0$-dimensional cusp, $\sigma$ be a ray in $\Sigma_{I}$, and 
$\Delta(\sigma)$ be the corresponding boundary divisor of ${\FGcpt}$. 
Then the vanishing order $\nu_{\Delta(\sigma)}(\omega_{F})$ 
of $\omega_{F}$ along $\Delta(\sigma)$ is given by 
\begin{equation*}
\nu_{\Delta(\sigma)}(\omega_{F}) = 
\nu_{\sigma, geom}(F) - m = 
\begin{cases}
\nu_{\sigma}(F)-m & \sigma : \textrm{regular} \\ 
\frac{1}{2}\nu_{\sigma}(F)-m & \sigma : \textrm{irregular} 
\end{cases}
\end{equation*}
\end{proposition}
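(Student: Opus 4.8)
The plan is to transport the whole computation onto the partial compactification $\XIIcpt$ of $\XII=\D/U(I)_{\Z}^{\star}$, which serves as a local cover of $\FGcpt$ that is \emph{unramified} along the boundary, and then to convert between the canonical bundle and the automorphic line bundle using the earlier results. A preliminary step is to verify that the invariance hypothesis \eqref{eqn: UIZstar invariance} holds for the pair $(k,\chi)=(mb,\det^{m})$, so that Propositions \ref{prop: K vs L} and \ref{prop: geom interpret ord geom} apply. Indeed, on the generator $-E_{w}$ of $U(I)_{\Z}^{\star}/\UIZ$ (or on $-\id$ when $-\id\in\G$) the factor of automorphy of $s_{I}$ is $-1$, so $s_{I}^{\otimes mb}$ transforms by $(-1)^{mb}$, while $\det^{m}(-E_{w})=((-1)^{b+2})^{m}=(-1)^{mb}$; the product is $1$, and hence $s_{I}^{\otimes mb}\otimes 1_{\det^{m}}$ is $U(I)_{\Z}^{\star}$-invariant and descends to a frame over $\XIIcpt$.

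Next I would set up the comparison of vanishing orders. Via $\mathcal{L}^{\otimes b}\otimes\det\simeq K_{\D}$ and its $m$-th power, $F$ is matched with the rational $m$-canonical form $\omega_{F}$. By Theorem \ref{thm: AMRT} together with \eqref{eqn: /GIZ vs /GIZZ}, near the boundary over $I$ the space $\FGcpt$ is the quotient of $\XIIcpt$ by the finite group $\Gamma'(I)_{\Z}/\UIZZ$, and by Proposition \ref{prop: boundary ramification divisor}(1) the resulting map $p\colon\XIIcpt\to\FGcpt$ is unramified along every boundary divisor (this being exactly the part of that proposition that also covers isotropic rays $\sigma=\sigma_{J}$). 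Consequently $p$ is \'etale at the generic point of $D(\sigma)'$, the ramification index along $D(\sigma)'$ is $1$, and $p^{*}K_{\FGcpt}=K_{\XIIcpt}$ there, so that
\begin{equation*}
\nu_{\Delta(\sigma)}(\omega_{F})=\nu_{D(\sigma)'}\bigl(p^{*}\omega_{F}\bigr),
\end{equation*}
the right-hand side being the order of $\omega_{F}$ as a rational section of $K_{\XIIcpt}^{\otimes m}$ along $D(\sigma)'$.

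Then I would pass from canonical vanishing to modular vanishing using Proposition \ref{prop: K vs L} raised to the $m$-th power,
\begin{equation*}
\mathcal{L}^{\otimes mb}\otimes\det^{m}\;\simeq\;K_{\XIIcpt}^{\otimes m}\Bigl(\,m\sum_{\sigma}D(\sigma)'\Bigr),
\end{equation*}
where $\sigma$ ranges over all rays of $\Sigma_{I}$. Under this isomorphism $F$ corresponds to $\omega_{F}$ viewed as a section of the twisted bundle, so in the local equation $t$ of $D(\sigma)'$ the twist by $m\,D(\sigma)'$ shifts the order by $m$: the order of $\omega_{F}$ as a rational section of $K_{\XIIcpt}^{\otimes m}$ equals the order of $F$ as a section of $\mathcal{L}^{\otimes mb}\otimes\det^{m}$ minus $m$. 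By Proposition \ref{prop: geom interpret ord geom} the latter order along $D(\sigma)'$ is precisely $\nu_{\sigma,geom}(F)$. Combining the displays gives $\nu_{\Delta(\sigma)}(\omega_{F})=\nu_{\sigma,geom}(F)-m$, which unwinds to the two stated cases by the definition of $\nu_{\sigma,geom}$.

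The main obstacle, and the real content, is organizing the quotient so that the comparison map is unramified in codimension one along the boundary. The naive cover $\XIcpt=\D/\UIZ$ is ramified with index $2$ along irregular $D(\sigma)$ by Corollary \ref{cor: irregular 0dim ramify}, which would distort the order count; replacing it by $\XIIcpt=\D/U(I)_{\Z}^{\star}$ removes this ramification, at the cost of measuring the order against $w_{\sigma}'$, the generator of $\sigma\cap\UIZZ$, rather than $w_{\sigma}$. Since $w_{\sigma}'=w_{\sigma}/2$ exactly when $\sigma$ is irregular, this substitution is the source of the coefficient $\tfrac12$, so the bookkeeping of which unipotent lattice is used at each step is where care is essential.
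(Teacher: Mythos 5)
Your proposal is correct and follows essentially the same route as the paper's own proof: pass to the cover $\XIIcpt = ({\D}/U(I)_{{\Z}}^{\star})^{\Sigma_{I}}$, use Proposition \ref{prop: boundary ramification divisor}(1) to see that this cover is unramified along every boundary divisor (including isotropic rays), and combine Propositions \ref{prop: K vs L} and \ref{prop: geom interpret ord geom} to convert the canonical-bundle order into $\nu_{\sigma,geom}(F)-m$. Your explicit verification that $s_{I}^{\otimes mb}\otimes 1_{\det^{m}}$ is $U(I)_{{\Z}}^{\star}$-invariant (via $j(-E_{w},\cdot)=-1$ and $\det^{m}(-E_{w})=(-1)^{mb}$) is a detail the paper leaves implicit, and it is correct.
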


\begin{proof}
Let $\pi\colon (\mathcal{X}(I)')^{\Sigma_{I}} \to {\FGcpt}$ be the projection. 
By Propositions \ref{prop: geom interpret ord geom} and \ref{prop: K vs L}, 
we have 
\begin{equation*}
\nu_{D(\sigma)'}(\pi^{\ast}\omega_{F}) = \nu_{\sigma, geom}(F) - m. 
\end{equation*}
By Proposition \ref{prop: boundary ramification divisor} (1), 
$\pi$ is not ramified along $D(\sigma)'$, 
regardless of whether $\sigma$ is positive-definite or isotropic. 
This implies that 
$\nu_{D(\sigma)'}(\pi^{\ast}\omega_{F})=\nu_{\Delta(\sigma)}(\omega_{F})$. 
%This is still true when $\sigma=\sigma_{J}$ is isotropic 
%corresponding to a $1$-dimensional cusp $J\supset I$. 
%Indeed, the composition 
%$\overline{{\D}/{\UJZZ}} \to {\XIIcpt} \stackrel{\pi}{\to} {\FGcpt}$ 
%is not ramified along the boundary divisor of $\overline{{\D}/{\UJZZ}}$  
%so $\pi$ is also not ramified along $D(\sigma_{J})'$. 
\end{proof}

When $\sigma=\sigma_{J}$ is isotropic, 
the above equality can be written as 
\begin{equation*}
\nu_{\Delta(\sigma_{J})}(\omega_{F}) = \nu_{J, geom}(F) - m, 
\end{equation*}
where $\Delta(\sigma_{J})$ is the boundary divisor of  ${\FGcpt}$ over $J$. 

By Gritsenko-Hulek-Sankaran \cite{GHS07}, 
every irreducible component of the ramification divisor of 
${\D}\to {\FG}$ has ramification index $2$ (and is defined by a reflection). 
Since every boundary divisor of ${\FGcpt}$ is of the form $\Delta(\sigma)$ 
for some ray $\sigma$ at some $0$-dimensional cusp $I$,  
Proposition \ref{prop: vanishing order relation} implies the following. 

\begin{corollary}\label{cor: extendability}
The $m$-canonical form $\omega_{F}$ extends holomorphically over the regular locus of ${\FGcpt}$ 
if and only if the following hold: 
\begin{enumerate}
\item $\nu_{R}(F)\geq m$ at every irreducible component $R$ of the ramification divisor of ${\D}\to {\FG}$. 
\item $\nu_{\sigma}(F)\geq m$ at every regular ray $\sigma$ for every $0$-dimensional cusp. 
\item $\nu_{\sigma}(F)\geq 2m$ at every irregular ray $\sigma$ for every irregular $0$-dimensional cusp. 
\end{enumerate}
\end{corollary}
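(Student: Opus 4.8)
The plan is to reduce the extendability of $\omega_{F}$ over the regular (smooth) locus of ${\FGcpt}$ to a non-negativity statement on the vanishing order $\nu_{D}(\omega_{F})$ along each prime divisor $D$ of ${\FGcpt}$ whose generic point is smooth, and then to evaluate these orders divisor by divisor. By normality, a rational $m$-canonical form extends holomorphically over the smooth locus if and only if $\nu_{D}(\omega_{F})\geq 0$ for every such $D$. The relevant prime divisors split into two families: the images in ${\FG}$ of the irreducible components of the ramification divisor of $p\colon {\D}\to {\FG}$, and the boundary divisors $\Delta(\sigma)$ of ${\FGcpt}$.

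First I would treat the interior. The form $\omega_{F}$ is the descent of the holomorphic $m$-canonical form $\tilde{\omega}_{F}$ on ${\D}$ corresponding to $F$ under $\mathcal{L}^{\otimes mb}\otimes \det^{m}\simeq K_{{\D}}^{\otimes m}$, so $\omega_{F}$ is already holomorphic where $p$ is \'etale; all non-mirror fixed loci lie in codimension $\geq 2$ and hence do not meet the regular locus in codimension $1$. Thus in the interior only the reflection divisors contribute. For such a component $R$, the cited result of \cite{GHS07} gives ramification index $2$, so writing the downstairs coordinate locally as $s=t^{2}$ with $R=(t=0)$ and $(ds)^{\otimes m}=2^{m}t^{m}\,(dt)^{\otimes m}$, and using $\nu_{R}(\tilde{\omega}_{F})=\nu_{R}(F)$ (the frame $s_{I}^{\otimes mb}\otimes 1_{\det^{m}}$ being nonvanishing), one finds $\nu_{R}(F)=2\,\nu_{p(R)}(\omega_{F})+m$, whence
\begin{equation*}
\nu_{p(R)}(\omega_{F})=\tfrac{1}{2}\bigl(\nu_{R}(F)-m\bigr).
\end{equation*}
Therefore $\nu_{p(R)}(\omega_{F})\geq 0$ is equivalent to $\nu_{R}(F)\geq m$, which is condition $(1)$. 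Since the quotient by a reflection is smooth in codimension $1$, the divisor $p(R)$ indeed meets the regular locus.

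Next I would treat the boundary. Every boundary divisor of ${\FGcpt}$ is of the form $\Delta(\sigma)$ for a ray $\sigma$ at some $0$-dimensional cusp $I$, and its generic point lies in the regular locus (a ray is a smooth cone, so it defines a smooth generic point of its toric divisor, and the only codimension-one stabilizer, the order-two element $-E_{w}$ at an irregular ray, acts as a reflection). Proposition \ref{prop: vanishing order relation} then gives directly
\begin{equation*}
\nu_{\Delta(\sigma)}(\omega_{F})=\nu_{\sigma, geom}(F)-m.
\end{equation*}
Unwinding the definition of $\nu_{\sigma, geom}(F)$, the requirement $\nu_{\Delta(\sigma)}(\omega_{F})\geq 0$ becomes $\nu_{\sigma}(F)\geq m$ when $\sigma$ is regular and $\nu_{\sigma}(F)\geq 2m$ when $\sigma$ is irregular, which are conditions $(2)$ and $(3)$; the isotropic case $\sigma=\sigma_{J}$ is subsumed, its divisor $\Delta(\sigma_{J})$ lying over the $1$-dimensional cusp $J$. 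Combining the interior and boundary contributions yields the stated equivalence.

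The hard part will be the bookkeeping in the reduction step rather than the computations: one must confirm that no interior divisor other than the reflection mirrors can produce a pole of $\omega_{F}$ (using that $\tilde{\omega}_{F}$ is holomorphic and that all other fixed loci have codimension $\geq 2$), and that the generic points of the $\Delta(\sigma)$ all lie in ${\FGcpt}^{\mathrm{reg}}$, so that conditions $(1)$--$(3)$ exhaust exactly the prime divisors meeting the regular locus. Once this is settled, the two vanishing-order identities follow immediately from \cite{GHS07} and from Proposition \ref{prop: vanishing order relation}.
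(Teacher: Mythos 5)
Your proposal is correct and follows essentially the same route as the paper: the interior ramification divisors are handled by the index-$2$ reflection result of \cite{GHS07} (your local computation $\nu_{p(R)}(\omega_{F})=\tfrac{1}{2}(\nu_{R}(F)-m)$ is the standard one), and the boundary divisors are exhausted by the $\Delta(\sigma)$ and handled by Proposition \ref{prop: vanishing order relation}, exactly as in the text. Your extra bookkeeping (normality reduces extendability over the regular locus to $\nu_{D}(\omega_{F})\geq 0$ along every prime divisor, and non-mirror fixed loci have codimension $\geq 2$) is just the implicit content of the paper's one-line deduction spelled out.
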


Note that 
extendability at the boundary divisors over the $1$-dimensional cusps 
is encoded in the conditions (2), (3) at isotropic rays $\sigma$ for adjacent $0$-dimensional cusps.

\subsection{Low slope cusp form criterion}\label{ssec: cusp form criterion}

We now arrive at our principal purpose. 
Theorem \ref{thm: intro modified criterion} follows from the case $k<b$ in the following. 

\begin{theorem}\label{thm: low slope}
Let $L$ be a lattice of signature $(2, b)$ with $b\geq 9$. 
Let ${\G}$ be a subgroup of ${\OL}$ of finite index. 
We take a ${\G}$-admissible collection of fans $\Sigma=(\Sigma_{I})$ 
such that $\Sigma_{I}$ is basic with respect to ${\UIZZ}={\UIQ}\cap {\GG}$ at each $0$-dimensional cusp $I$. 
Assume that we have a cusp form $F$ of some weight $k$ and character with respect to ${\G}$ 
satisfying the following: 
\begin{enumerate}
\item At every irreducible component $R$ of the ramification divisor of ${\D}\to {\FG}$, 
we have $\nu_{R}(F)/k > 1/b$. 
\item At every regular ray $\sigma$ of $\Sigma_{I}$ at every $0$-dimensional cusp $I$, 
we have $\nu_{\sigma}(F)/k > 1/b$. 
\item At every irregular ray $\sigma$ of $\Sigma_{I}$ at every irregular $0$-dimensional cusp $I$, 
we have $\nu_{\sigma}(F)/k > 2/b$. 
\end{enumerate}
Then ${\FG}$ is of general type. 
\end{theorem}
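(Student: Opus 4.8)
The plan is to establish that $\FG$ is of general type by exhibiting, for suitable large $m$, a space of $m$-canonical forms of dimension growing like $m^{b}$, which forces the Kodaira dimension to equal $b=\dim\FG$. Since each $\Sigma_{I}$ is basic with respect to $\UIZZ$ and $b\geq 9$, Proposition \ref{thm: singularity} guarantees that $\FGcpt$ has canonical singularities; hence any $m$-canonical form that extends holomorphically over the regular locus of $\FGcpt$ pulls back to a genuine section of $mK$ on a resolution, so it contributes to the $m$-th plurigenus $P_{m}(\FG)$. By Corollary \ref{cor: extendability}, a modular form $H$ of weight $mb$ and character $\det^{m}$ yields such an extending form precisely when $\nu_{R}(H)\geq m$ along every ramification divisor $R$, $\nu_{\sigma}(H)\geq m$ at every regular ray, and $\nu_{\sigma}(H)\geq 2m$ at every irregular ray; extendability at the $1$-dimensional cusps is already subsumed here through the isotropic rays. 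So the whole problem reduces to producing many weight-$mb$ modular forms meeting these three vanishing bounds.

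To construct them I would adapt the Gritsenko--Hulek--Sankaran trick: take products $H=F^{a}\cdot G$, where $F$ is the given cusp form of weight $k$ and character $\chi$, and $G$ runs over \emph{all} holomorphic modular forms of weight $mb-ak$ and character $\det^{m}\chi^{-a}$. Because $G$ is holomorphic, $\nu_{\bullet}(F^{a}G)=a\,\nu_{\bullet}(F)+\nu_{\bullet}(G)\geq a\,\nu_{\bullet}(F)$ at every ramification divisor and every ray, so it suffices to choose $a$ with $a\,\nu_{R}(F)\geq m$, with $a\,\nu_{\sigma}(F)\geq m$ at regular $\sigma$, and with $a\,\nu_{\sigma}(F)\geq 2m$ at irregular $\sigma$. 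Setting $\rho=\max\bigl(k/\nu_{R}(F),\,k/\nu_{\sigma}(F)_{\mathrm{reg}},\,2k/\nu_{\sigma}(F)_{\mathrm{irr}}\bigr)$ over all relevant divisors, the single choice $a=\lceil \rho m/k\rceil$ meets all three bounds at once. The key structural point is that the factor $2$ in the irregular bound is exactly absorbed by the factor $2$ in hypothesis (3): both the regular requirement $a\,\nu_{\sigma}(F)\geq m$ and the irregular requirement $a\,\nu_{\sigma}(F)\geq 2m$ collapse to the one inequality $a\geq \rho m/k$, so irregular rays need no separate treatment once $\rho$ carries that factor.

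The quantitative heart of the argument is that the three strict slope hypotheses give $k/\nu_{R}(F)<b$, $k/\nu_{\sigma}(F)<b$ and $2k/\nu_{\sigma}(F)<b$, whence $\rho<b$ with a definite gap. Consequently $ak\approx\rho m$ stays below $mb$ by the fixed positive proportion $(b-\rho)/b$, so the weight $mb-ak\approx(b-\rho)m$ of $G$ grows linearly in $m$. Since modular forms of weight $w$ are (twisted) sections of $\mathcal{L}^{\otimes w}$ on the $b$-dimensional Baily--Borel compactification, on which $\mathcal{L}$ descends to an ample line bundle, their number grows like $c\,w^{b}$ with $c>0$; passing to an arithmetic progression of $m$ on which the character $\det^{m}\chi^{-a}$ is fixed (legitimate because $\chi(\G)$ is finite), this gives $\dim M_{mb-ak}(\G,\det^{m}\chi^{-a})\geq c'\,((b-\rho)m)^{b}$. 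As $G\mapsto F^{a}G$ is injective, we obtain that many linearly independent $m$-canonical forms, so $P_{m}(\FG)\gg m^{b}$ and $\FG$ is of general type.

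The main obstacle is precisely the bookkeeping of the last two paragraphs: one must check that the strict slope inequalities really yield $\rho<b$ with room to spare, confirm that $mb-ak$ remains a positive proportion of $mb$ so that the estimate genuinely produces on the order of $m^{b}$ forms, and manage the harmless but real matching of weight and character so that the spaces $M_{mb-ak}(\G,\det^{m}\chi^{-a})$ are nonempty with maximal growth. Everything else---canonical singularities of $\FGcpt$, the absence of boundary ramification built into Corollary \ref{cor: extendability}, and the reflection (index $2$) description of the interior ramification divisor---may be quoted from Proposition \ref{thm: singularity}, Proposition \ref{prop: boundary ramification divisor}, and the cited results of \cite{GHS07}.
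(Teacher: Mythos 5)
Your proposal is correct and takes essentially the same route as the paper's proof: the low-slope cusp form trick, i.e.\ multiplying a power of $F$ by arbitrary modular forms of complementary weight, invoking Corollary \ref{cor: extendability} for extension over the regular locus, Proposition \ref{thm: singularity} (plus \cite{GHS07} for the interior) for canonical singularities, and the $\sim m^{b}$ growth of spaces of modular forms to conclude bigness. The only differences are bookkeeping: you let the exponent $a=\lceil \rho m/k\rceil$ depend on $m$, which neatly avoids the paper's $F^{2^{N}}$ rounding trick and its separate treatment of the case $b\mid k$, at the cost of an $m$-dependent character $\det^{m}\chi^{-a}$ that you handle (slightly tersely, but correctly, e.g.\ by taking $m$ even and $a$ divisible by the order of $\chi$, or by multiplying by a suitable power of $F$ to realize the character) via arithmetic progressions.
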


\begin{proof}
The following argument is a slight modification of the proof of 
\cite{GHS07} Theorem 1.1, avoiding the use of a neat cover. 

Replacing $F$ with its power, which does not change the slopes $\nu_{\ast}(F)/k$, 
we may assume that the character $\chi$ is trivial. 
We first consider the case $b\nmid k$. 
By further replacing $F$ with its power $F^{2^{N}}$, 
where $N$ is determined by 
$[k/b]+2^{-N-1} \leq  k/b < [k/b]+2^{-N}$, 
we may assume that 
$k/b\geq [k/b]+1/2$ so that 
$[2k/b]=2[k/b]+1$. 
We write $N_{0}=[k/b]+1$. 
Then $F$ has vanishing order $\geq N_{0}$ at the ramification divisors of ${\D}\to {\FG}$ 
and at the regular boundary divisors, 
and  vanishing order $\geq 2N_{0}$ at the irregular boundary divisors. 
We denote by $M_{l}({\G})$ the space of ${\G}$-modular forms of weight $l$ with trivial character. 
For an even number $m$ we consider the subspace 
$V_{m}=F^{m}\cdot M_{(bN_{0}-k)m}({\G})$ 
of $M_{bN_{0}m}({\G})$. 
Modular forms in $V_{m}$ have vanishing order $\geq mN_{0}$ at the interior ramification divisors 
and at the regular boundary divisors, 
and vanishing order $\geq 2mN_{0}$ at the irregular boundary divisors. 
Thus the corresponding $mN_{0}$-canonical forms extend holomorphically over 
the regular locus of ${\FGcpt}$ by Corollary \ref{cor: extendability}. 
By our choice of $\Sigma$, 
${\FGcpt}$ has canonical singularities at the boundary points by Proposition \ref{thm: singularity}, 
and the interior ${\FG}$ has canonical singularities by 
Gritsenko-Hulek-Sankaran \cite{GHS07}. 
Therefore these $mN_{0}$-canonical forms extend holomorphically over 
a desingularization $X$ of ${\FGcpt}$. 
Since $bN_{0}>k$, we have 
\begin{equation*}
\dim V_{m} = \dim M_{(bN_{0}-k)m}({\G}) \sim c\cdot m^{b} \qquad (m\to \infty) 
\end{equation*}
for some $c>0$, 
so we find that $K_{X}$ is big. 
%Therefore $X$ is of general type. 

When $b\mid k$, we replace $F$ with the product of a sufficiently large power of $F$ 
and a modular form of weight indivisible by $b$. 
This perturbs the slopes $\nu_{\ast}(F)/k$ only by $\varepsilon$, 
so the inequalities in (1) -- (3) still hold.  
Then the same argument works. 
\end{proof}

\begin{remark}
If we replace "$>$" in the conditions (1) -- (3) by "$\geq $", 
then the conclusion will be weakened to  
"${\FG}$ has nonnegative Kodaira dimension". 
A power of $F$ gives a nonzero pluricanonical form. 
\end{remark}

Geometric explanation of Theorem \ref{thm: low slope} is as follows. 
We have the ${\Q}$-linear equivalence 
\begin{equation*}
K_{{\FGcpt}} \sim_{{\Q}} b\mathcal{L} - B/2 - \Delta_{reg} - \Delta_{irr} 
\end{equation*}
over ${\FGcpt}$, 
where $B$ is the interior branch divisor and 
$\Delta_{reg}, \Delta_{irr}$ are the regular and irregular boundary divisors respectively. 
The coefficients of $B$ and $\Delta_{irr}$ will be multiplied by $2$ 
when pulled back to local charts. 
The existence of the cusp form $F$ means that 
$b'\mathcal{L} - B/2 - \Delta_{reg} - \Delta_{irr}$ 
is ${\Q}$-effective for some $b'<b$, $b'\in {\Q}$. 
(To be explicit, $b'=k/N_{0}$ in the case $b\nmid k$ in the proof.) 
Thus we have 
\begin{equation*}
K_{{\FGcpt}} 
 \sim  
({\Q}\textrm{-effective}) + (b-b')\mathcal{L} 
 =  ({\Q}\textrm{-effective}) + (\textrm{big}) 
 =  (\textrm{big}), 
\end{equation*}
and the singularities do not impose obstruction.

%As we have noticed, condition at $1$-dimensional cusps is encoded in 
%the condition at isotropic rays $\sigma$. 

\end{document}